\newtheorem{Thm}{Theorem}{\bfseries}{\itshape}
\newtheorem*{Thm*}{Theorem}{\bfseries}{\itshape}
\newtheorem{Cor}{Corollary}{\bfseries}{\itshape}
\newtheorem{Prop}[Cor]{Proposition}{\bfseries}{\itshape}
\newtheorem{Lem}[Cor]{Lemma}{\bfseries}{\itshape}
{\bfseries}{\itshape}
\newtheorem{Def}[Cor]{Definition}{\bfseries}{\rmfamily}
\newtheorem{Ex}[Cor]{Example}{\scshape}{\rmfamily}
\newtheorem{Rem}[Cor]{Remark}{\scshape}{\rmfamily}
{\scshape}{\rmfamily}
\renewcommand\ge{\geqslant} \renewcommand\le{\leqslant}
\let\tildeaccent=\~ \let\hataccent=\^
\renewcommand\~[1]{\widetilde{#1}}
\def\<{\left<} \def\>{\right>} \def\({\left(} \def\){\right)}
\def\abs#1{\left\vert #1 \right\vert} \def\norm#1{\left\Vert #1
  \right\Vert}
 \def\pd#1#2{\frac{\partial#1}{\partial#2}}
\let\polishL=l \def\Zoladek.{\.Zol\c adek}
 \def\Im{\operatorname{Im}}
 \def\dist{\operatorname{dist}}
 \def\ord{\operatorname{ord}}
 \def\etc.{\emph{etc}.}
\def\iclo#1{\overline{#1}}
\def\:{\colon} \def\R{{\mathbb R}} \def\C{{\mathbb C}}  \def\N{{\mathbb N}} \def\Q{{\mathbb Q}} 
 \def\e{\varepsilon}
 \def\d{\,\mathrm d}
 \let\PolishL=\L \def\Lojas.{\PolishL ojasiewicz}
\def\cP{{\mathcal P}}  
 \def\cL{{\mathcal L}}
\def\cO{{\mathcal O}}
 \def\mult{\operatorname{mult}}
\def\rest#1{{\vert_{#1}}}
\def\LT{\operatorname{LT}} \def\spec{\operatorname{Spec}}
\def\bfi{{\mathbf i}}
\def\CD{C^D}  \def\CZ{C^Z}
\newcommand{\mo}[1][k]{M^{\smash{(#1)}}}
\newcommand{\bmo}{M}
\begin{document}

\title{Multiplicity Operators} 
\author{Gal Binyamini}\address{University of Toronto, Toronto, 
Canada}\email{galbin@gmail.com}
\thanks{The first author was supported by the Banting
  Postdoctoral Fellowship and the Rothschild Fellowship}
\author{Dmitry Novikov}\address{Weizmann Institute of Science, Rehovot, 
Israel}\email{dmitry.novikov@weizmann.ac.il}
\thanks{Supported by the Minerva foundation with funding from the Federal 
German Ministry for Education and Research.}

\begin{abstract}
  For functions of a single complex variable, zeros of multiplicity
  greater than $k$ are characterized by the vanishing of the first $k$
  derivatives. There are various quantitative generalizations of this
  statement, showing that for functions that are in some sense close
  to having a zero of multiplicity greater than $k$, the first $k$
  derivatives must be small.

  In this paper we aim to generalize this situation to the
  multi-dimensional setting. We define a class of differential
  operators, the \emph{multiplicity operators}, which act on maps from
  $\C^n$ to $\C^n$ and satisfy properties analogous to those described
  above. We demonstrate the usefulness of the construction by applying
  it to some problems in the theory of Noetherian functions.
\end{abstract}
\maketitle
\date{\today}

\section{Introduction}

The derivative operator is an invaluable tool in the study of zeros of
functions of a single variable. In the real setting, the Rolle theorem
roughly states that if a function admits $k$ zeros on an interval, its
derivative admits at least $k-1$ zeros. This statement and its
numerous ramifications have found applications in the qualitative
theory of ordinary differential equations (see for instance the survey
\cite{yakovenko:survey} and references therein), and in the theory of
Fewnomials and Pfaffian functions (see the monograph \cite{Kho:Few}
and the paper \cite{Gab:Loj}).

In the complex setting, the literal analog of the Rolle theorem fails.
However, certain local analogs still hold and can be used to study the
zeros of holomorphic functions (in one variable). The goal of this
paper is to provide multi-dimensional analogs of the derivative
operator in this context, and to illustrate their application in the
study of Noetherian systems of equations, their multiplicities and the
number of their solutions. In the remainder of this section we review
some of the classical one-dimensional results in this spirit and
reference results in the body of the paper which generalize them to
several variables.

We fix some notations used throughout the paper. We let $D_r$ denote
the closed complex disc of radius $r$ and $D^n_r$ (resp. $B^n_r$)
denote the corresponding polydisc (resp. ball) in $\C^n$. When $r$ is
omitted it is assumed that $r=1$. For a complex domain $U$ (in any
dimension) we let $\cO(U)$ denote the space of analytic functions on
$U$, and $\norm{\cdot}$ the maximum norm on $\cO(U)$. Finally, we let
$\cO^n(U)$ denote the space of analytic maps into $\C^n$, and
$\norm\cdot$ the direct sum norm.

A function of a single complex variable admits a zero of multiplicity
greater than $k$ if and only if its derivatives vanish up to order
$k$. It is natural to expect that a quantitative generalization of
this statement also holds -- namely, that if a function is ``close''
to having a zero of multiplicity greater than $k$ then its derivatives
up to order $k$ must be small. Below we give two quantitative
statements of this form.

\begin{Thm*}[I]
  Let $f\in\cO(D)$ and assume that $\norm{f}\le1$. There exists an
  absolute positive constant $\CZ_{1,k}$ such that if $f$ has more
  than $k$ zeros in $D_r$ then
  \begin{equation}
    \CZ_{1,k} \abs{\smash{f^{(k)}(0)}} < r
  \end{equation}
\end{Thm*}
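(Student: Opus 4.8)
The plan is to reduce the statement to a standard fact about interpolation/divided differences. First I would normalize: if $f$ has more than $k$ zeros in $D_r$, say at points $z_0,\dots,z_k\in D_r$ (counted with multiplicity), then the divided difference $f[z_0,\dots,z_k]$ vanishes, since divided differences vanish when the interpolated function vanishes at all the nodes (interpreting coincident nodes via derivatives as usual). On the other hand, by the Hermite–Genocchi formula, $f[z_0,\dots,z_k]$ equals the integral of $\tfrac{1}{k!}f^{(k)}$ over the standard simplex spanned by the nodes, so in particular $f^{(k)}(\xi)=0$ for some $\xi$ in the convex hull of $z_0,\dots,z_k$, hence with $\abs\xi\le r$.

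Next I would estimate $\abs{f^{(k)}(0)}$ in terms of $\abs{f^{(k)}(\xi)}=0$ by controlling the oscillation of $f^{(k)}$ on $D_r$. Since $\norm f\le 1$ on $D=D_1$, the Cauchy estimates give a bound on $\norm{f^{(k+1)}}$ on, say, the disc $D_{1/2}$ (or on any fixed subdisc), of the form $\norm{f^{(k+1)}}_{D_{1/2}}\le c_k$ for an absolute constant $c_k$ depending only on $k$. Then for $r\le\tfrac14$,
\begin{equation}
  \abs{\smash{f^{(k)}(0)}}=\abs{\smash{f^{(k)}(0)-f^{(k)}(\xi)}}\le \abs\xi\cdot\norm{f^{(k+1)}}_{D_{1/2}}\le c_k\, r,
\end{equation}
which is exactly the claimed inequality with $\CZ_{1,k}=1/c_k$; the case $r>\tfrac14$ is trivial since then $\abs{f^{(k)}(0)}\le c_k'$ is bounded by an absolute constant (again Cauchy) and one shrinks $\CZ_{1,k}$ accordingly.

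I expect the only genuine subtlety to be the bookkeeping when the zeros of $f$ in $D_r$ are not simple, so that the nodes $z_0,\dots,z_k$ coincide; this is handled cleanly by the Hermite–Genocchi formula, which is valid for arbitrary (possibly repeated) real or complex nodes and directly produces the mean-value point $\xi$ in their convex hull. An alternative, perhaps more elementary, route avoids divided differences entirely: apply the one-variable argument inductively, using a complex ``Rolle-type'' lemma — if $g$ has $m$ zeros in $D_\rho$ then $g'$ has a zero in a disc of comparable radius — $k$ times to pass from $f$ to $f^{(k)}$, shrinking the radius by an absolute factor at each step, and then conclude as above. Either way, the mechanism is: vanishing of many zeros forces a nearby zero of $f^{(k)}$, and a Cauchy/Taylor estimate transfers this to smallness of $f^{(k)}(0)$.
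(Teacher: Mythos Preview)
Your main argument has a gap at the mean-value step. From Hermite--Genocchi you obtain
\[
0 = f[z_0,\dots,z_k] = \int_{\Delta_k} f^{(k)}\Bigl(\sum_j t_j z_j\Bigr)\,dt,
\]
and from this you deduce $f^{(k)}(\xi)=0$ for some $\xi$ in the convex hull of the nodes. That deduction is the mean value theorem for integrals, which is false for complex-valued integrands (e.g.\ $\int_0^{2\pi}e^{ix}\,dx=0$ while $e^{ix}$ is nowhere zero). Here $f$ is holomorphic, so $f^{(k)}$ is genuinely complex-valued and the step is unjustified. Your alternative route via an iterated ``complex Rolle-type lemma'' has the same defect: as the paper's introduction stresses, the literal analog of Rolle fails in the complex setting, and you would have to formulate and prove a quantitative substitute before invoking it.

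The good news is that you do not need a zero of $f^{(k)}$ at all. Since the Hermite--Genocchi integral vanishes and the simplex has volume $1/k!$, subtracting the constant $f^{(k)}(0)$ gives directly
\[
\abs{f^{(k)}(0)} \le \sup_{\abs{\xi}\le r}\abs{f^{(k)}(\xi)-f^{(k)}(0)} \le r\cdot\sup_{D_{1/2}}\abs{f^{(k+1)}} \le c_k\,r
\]
for $r\le 1/2$, by the Cauchy estimate exactly as in your second paragraph; the large-$r$ case is handled as you say. With this correction your argument goes through. It is, however, quite different from what the paper does: the paper states Theorem~(I) as classical and proves only its $n$-variable generalization, Theorem~\ref{thm:polydisc-zeros}, via the effective division Lemma~\ref{lem:weierstrass}, showing that if the multiplicity operator is large compared to $r$ then every holomorphic function on the small polydisc is congruent modulo $F$ to an element of a fixed $k$-dimensional space $\C\langle x^B\rangle$, which is incompatible with $k+1$ distinct zeros. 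Your corrected divided-difference argument is more elementary in one variable but has no evident multivariable analogue --- precisely the obstacle the multiplicity-operator machinery is built to overcome.
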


This theorem shows that if a function has a $k$-th derivative of size
$s$ then the function cannot have more than $k$ zeros in a disc whose
radius is proportional to $s$. It therefore gives control on the
number of zeros of a function in terms of its derivatives. A
generalization of this theorem to several variables is given in
Theorem~\ref{thm:polydisc-zeros}.

\begin{Thm*}[II]
  Let $f\in\cO(D)$ and assume that $\norm{f}\le1$. There exist
  absolute positive constants $A_{1,k},B_{1,k}$ with the following
  property:

  For every $r < s=|f^{(k)}(0)|$ there exists $A_{1,k} r < 
  \tilde r < r$
  such that
  \begin{equation}
    \abs{f(z)} \ge B_{1,k} s \tilde r^k \mbox{ for every } \abs{z}=\tilde r
  \end{equation}
\end{Thm*}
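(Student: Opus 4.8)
The plan is to compare $f$ with its degree‑$k$ Taylor polynomial, exploiting that on a sufficiently small circle the higher order terms are negligible while the polynomial part behaves like $a_kz^k$; the hypothesis $r<s$ is exactly what allows the circle to be taken small enough for this. Write $f(z)=\sum_{j\ge0}a_jz^j$. By Cauchy's estimate $\abs{a_j}\le\norm f\le1$ for all $j$, and $\abs{a_k}=s/k!$, so in particular $a_k\neq0$. Split $f=P+T$ with $P(z):=\sum_{j=0}^k a_jz^j$ and $T(z):=\sum_{j>k}a_jz^j$. Then $P$ has degree exactly $k$, hence $P(z)=a_k\prod_{i=1}^k(z-\zeta_i)$ for some $\zeta_1,\dots,\zeta_k\in\C$, while $\abs{T(z)}\le\sum_{j>k}\abs z^j\le2\abs z^{k+1}$ for $z\in D_{1/2}$.

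Next I would fix the scale. Set $\rho:=c_k\,r$ with $c_k:=\bigl(4\,k!\,(4(k+1))^k\bigr)^{-1}$; since $r<s=k!\abs{a_k}\le k!$ we get $\rho<c_k\,k!\le\tfrac14$, so the tail bound applies on $D_\rho$. Then choose the radius by pigeonholing the zeros of $P$: the moduli $\abs{\zeta_i}$ lying in $[\rho/2,\rho]$, together with the endpoints $\rho/2$ and $\rho$, cut $[\rho/2,\rho]$ into at most $k+1$ subintervals, the longest of length $\ge\rho/(2(k+1))$; let $\tilde r$ be its midpoint and $\gamma:=\rho/(4(k+1))$. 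Then $\tilde r\in(\rho/2,\rho)$, and $\tilde r$ has distance $\ge\gamma$ from each $\abs{\zeta_i}$ in $[\rho/2,\rho]$ and from both endpoints; comparing a zero with $\abs{\zeta_i}<\rho/2$ (resp.\ $\abs{\zeta_i}>\rho$) to the endpoint $\rho/2$ (resp.\ $\rho$) then gives $\bigl|\,\tilde r-\abs{\zeta_i}\,\bigr|\ge\gamma$ for every $i$.

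It remains to estimate $f$ on the circle $\abs z=\tilde r$. There $\abs{z-\zeta_i}\ge\bigl|\,\abs z-\abs{\zeta_i}\,\bigr|\ge\gamma$, so $\abs{P(z)}=\abs{a_k}\prod_i\abs{z-\zeta_i}\ge(s/k!)\gamma^k$, whereas $\abs{T(z)}\le2\tilde r^{k+1}$. The constant $c_k$ was chosen precisely so the tail is at most half of the former: since $\tilde r<\rho=4(k+1)\gamma$ and $\tilde r<\rho=c_kr<c_ks$, we get $2\tilde r^{k+1}\le2\tilde r\,(4(k+1))^k\gamma^k<\tfrac{s}{2k!}\gamma^k$. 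Hence $\abs{f(z)}\ge\abs{P(z)}-\abs{T(z)}\ge\tfrac{s}{2k!}\gamma^k\ge\tfrac{s}{2k!}\bigl(\tfrac{\tilde r}{4(k+1)}\bigr)^k$, which is the claim with $B_{1,k}:=\bigl(2\,k!\,(4(k+1))^k\bigr)^{-1}$; and $\tilde r\in(\rho/2,\rho)$, so $A_{1,k}r<\tilde r<r$ with $A_{1,k}:=c_k/2$.

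The argument is elementary and does not require Theorem~I. The one delicate point is the calibration of $c_k$: the radius $\tilde r$ must be at once comparable to $r$ (so the conclusion has the stated form) and small compared to $s$ (so $P$ dominates $T$ on $\abs z=\tilde r$), and it is exactly the hypothesis $r<s$ that reconciles these. The pigeonhole step, which keeps $\abs P$ bounded below in spite of possibly clustered zeros $\zeta_i$, is routine but must be arranged so that the gap $\gamma$ stays proportional to $\tilde r$.
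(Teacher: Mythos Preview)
Your argument is correct. The paper does not prove Theorem~II separately; it is stated as a classical one-variable fact, and the paper's contribution is the $n$-dimensional Theorem~\ref{thm:sphere-growth}. Specializing that proof to $n=1$ gives exactly your strategy: write a polynomial $P$ of degree $\le k$ with $f$ equal to a controlled multiple of $P$ plus an $O(|z|^{k+1})$ error, bound $|P|$ from below on a circle avoiding its zeros (the paper invokes Lemma~\ref{lem:poly-discs-bound}, you factor $P=a_k\prod(z-\zeta_i)$ directly), and calibrate the radius so the tail is dominated.

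The only substantive difference is how the polynomial $P$ is produced. You take $P$ to be the degree-$k$ Taylor polynomial of $f$, so that $f=P+T$ with $T\in\mathfrak m^{k+1}$ and the leading coefficient $a_k=f^{(k)}(0)/k!$ is explicitly controlled. The paper instead obtains $P$ from Lemma~\ref{lem:local-resultant}, giving $P=Uf+E$ with $\|P\|_1=1$ and $\|U\|,\|E\|\le C s^{-1}$; one then bounds $|f|\ge(|P|-|E|)/\|U\|$. In one variable the normalized Taylor polynomial is itself an admissible choice for this $P$ (with $U=(\sum|a_j|)^{-1}$), so your argument is in effect the same proof with the abstract decomposition replaced by the explicit one. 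What the paper's route buys is the generalization: in several variables there is no single ``Taylor polynomial of $F$'' that factors over its zeros, and Lemma~\ref{lem:local-resultant} is precisely the device that manufactures a suitable univariate polynomial $P(\ell)$ for each coordinate direction. Your direct approach is cleaner for $n=1$ but does not extend.
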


This theorem shows that if a function has a $k$-th derivative of size
$s$ then up to a constant, the function grows at least like $sr^k$ on
spheres of radius $r<s$. This is not true for every such sphere (for
instance, the function might have zeros on some of them), but it is
true if one is allowed to move the sphere within some constant
multiplicative range. A generalization of this theorem to several
variables is given in Theorem~\ref{thm:sphere-growth}.

In conjunction with Rouch\'e's theorem, this theorem allows to conclude
that a perturbation of size $\e<s$ doesn't change  the number of zeros of $f$ 
in a disk of radius $(\e/s)^{1/k}$ (up to a multiplicative
constant). A precise statement of this type is given in
Corollary~\ref{cor:perts}.

The derivative operator also has a controlled decreasing effect on the
multiplicities of functions and ideals. We summarize various simple
formulations of this fact below.

\begin{Thm*}[III] The following statements hold:
  \begin{enumerate}
  \item Let $f\in\C((z))$. Then $\mult_0 f^{(k)}\ge\mult_0f-k$.
  \item Let $f$ be a real analytic function on $[0,1]$. Then $\ord_0
    f^{(k)}\ge\ord_0f-k$.
  \item Let $I\subset\C((z))$ be an ideal. Denote by $I^{(k)}$ the
    ideal generated by derivatives up to order $k$ of elements of
    $I$. Then $t^k I^{(k)}\subset I$ and $\mult I^{(k)}\ge\mult I-k$.
  \end{enumerate}
\end{Thm*}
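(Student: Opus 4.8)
The plan is to reduce all three assertions to the single elementary fact that, in one complex variable, a differentiation lowers the order of vanishing at the origin by at most $1$, and then to iterate this $k$ times; for the third assertion one additionally feeds this through the Leibniz rule in order to pass from series to ideals.

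For (1) and (2) I would first establish the auxiliary claim: for every $g\in\C((z))$ one has $\mult_0 g'\ge\mult_0 g-1$. Indeed, writing $g=\sum_{n\ge m}a_n z^n$ with $a_m\ne0$ and $m=\mult_0 g$, we get $g'=\sum_{n\ge m}n a_n z^{n-1}$: if $m\ne0$ the leading coefficient $m a_m$ is nonzero and $\mult_0 g'=m-1$; if $m=0$ then $g'$ has no principal part, so $\mult_0 g'\ge0\ge m-1$; and if $g=0$ both sides are $+\infty$. Iterating yields $\mult_0 f^{(k)}\ge\mult_0 f^{(k-1)}-1\ge\mult_0 f^{(k-2)}-2\ge\cdots\ge\mult_0 f-k$, which is (1). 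Statement (2) follows by applying (1) to the Taylor series of $f$ at $0$ (a genuine power series in $\C[[z]]\subseteq\C((z))$), using that $\ord_0 f$ is determined by this series and that differentiation commutes with passing to it.

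For (3) I would write $I=(z^m)$ with $m=\mult I$, which is legitimate because every ideal --- or, reading $\C((z))$ as a fraction field, every fractional ideal over the discrete valuation ring $\C[[z]]$ --- is a power of the uniformizer $z$ (the case $I=0$ being trivial). Then I claim $I^{(k)}=(z^p)$, where $p$ is the least order of vanishing among the nonzero derivatives $(z^m)^{(j)}$, $0\le j\le k$: the inclusion $(z^p)\subseteq I^{(k)}$ holds since $z^p$ is a nonzero scalar multiple of one such $(z^m)^{(j)}$, which lies in $I^{(k)}$ because $z^m\in I$; conversely every element of $I$ is $z^m h$ with $h\in\C[[z]]$, and the Leibniz rule $(z^m h)^{(j)}=\sum_{i=0}^{j}\binom{j}{i}(z^m)^{(i)}h^{(j-i)}$ exhibits each of its derivatives of order $j\le k$ as a $\C[[z]]$-combination of the $(z^m)^{(i)}$, hence as an element of $(z^p)$. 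Finally each nonzero $(z^m)^{(j)}$ is a nonzero constant times $z^{m-j}$, so has order $m-j\ge m-k$, whence $p\ge m-k$; this gives both $\mult I^{(k)}=p\ge m-k=\mult I-k$ and $z^k I^{(k)}=(z^{k+p})\subseteq(z^m)=I$. I do not anticipate a genuine obstacle here: the entire content is ``one derivative costs at most one unit of order,'' with Leibniz's rule transporting it from a single series to an ideal. The only point that needs a moment's care is the cancellation of the leading coefficient of $g'$ --- equivalently the vanishing of $(z^m)^{(j)}$ --- which occurs exactly when the relevant order lies in $\{0,\dots,k-1\}$; but in those cases $\mult_0 f-k$ is already negative (or the derivative in question vanishes identically), so the inequality holds automatically.
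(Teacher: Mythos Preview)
Your argument is correct. The only wrinkle is the ambient ring in part~(3): as written, $\C((z))$ is a field, so nonzero ideals are all of $\C((z))$; you rightly noticed this and reinterpreted $I$ as a (fractional) ideal over the DVR $\C[[z]]$, which is clearly the intended setting given the paper's later use of $\cO_p$ and the Hilbert--Samuel multiplicity. With that reading, your computation $I=(z^m)$, $I^{(k)}=(z^{\max(m-k,0)})$ via Leibniz, and hence $z^kI^{(k)}\subset I$ and $\mult I^{(k)}\ge\mult I-k$, is clean and complete.

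As for comparison with the paper: there is nothing to compare directly. Theorem~III is stated in the introduction as a classical one-variable fact, with no proof given; the paper's contribution is the multivariate analogs (Theorems on $\ord_\gamma\mo(F)$ and on $\mathfrak m^kM_{n,k}(I)\subset\iclo I$, and the corollary $\mult^{1/n}M_{n,k}(I)\ge\mult^{1/n}I-k$). Those proofs necessarily go through different machinery --- the vanishing of multiplicity operators on maps with $\mult>k$, the curve criterion for integral closure, and the Brunn--Minkowski inequality for Hilbert--Samuel multiplicities --- because in several variables there is no single generator $z^m$ to differentiate, and $\mathfrak m^kM_{n,k}(I)$ lands only in $\iclo I$, not in $I$ itself. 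Your elementary approach is exactly right for the one-dimensional statement and is what the paper implicitly assumes the reader can supply.
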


Various generalizations of these statements to several variables are
given in Theorems~\ref{thm:curve-growth}~and~\ref{thm:mult-op-iclo}
and in Corollary~\ref{cor:mult-up-ideal-mult}.

\section{Multiplicity Operators}

Let $R_n=\C[[x_1,\ldots,x_n]]$ denote the ring of formal power series
in $n$ variables, and $f_1,\ldots,f_n\in R_n$. Denote
$F=(f_1,\ldots,f_n)$.  We denote by $I(F)$ the ideal generated by
$f_1,\ldots,f_n$. Recall that the multiplicity of the common zero
$f_1=\ldots=f_n=0$, denoted $\mult F$, is defined to be $\dim_{\C}
R_n/I(F)$.

Since our estimates involve many constants, we use the convention that
constants appearing with a numeric superscript, e.g. $C^1$, may appear
in different proofs to denote different numbers. Constants with
non-numeric superscripts are meant to be defined throughout the paper.

Let $k$ be a non-negative integer.  Let $\frak m$ denote the maximal
ideal in $R_n$, let $J_{n,k}=R_n/{\frak m}^{k+1}$ denote the ring of
$k$-jets in $n$ variables, and let $j:R_n\to J_{n,k}$ denote the
canonical homomorphism. We also fix an arbitrary norm, for instance
the $l_1$ norm, on the space of polynomials.

We are interested in writing down differential-algebraic conditions on
$F$ that are satisfied if and only if $\mult F > k$. We begin by
recalling the following standard lemma.

\begin{Lem} \label{lem:jet-mult} We have $\mult F>k$ if and only if
  $\dim_{\C} J_{n,k}/j(I(F)) >k$
\end{Lem}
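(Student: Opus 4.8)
The plan is to reduce the statement to a computation with Hilbert functions of the graded ring $\mathrm{gr}_{\mathfrak m} R_n/I(F)$, or more elementarily, to track how the quotient $R_n/I(F)$ is approximated by its truncations. The key observation is that $\mult F = \dim_\C R_n/I(F)$ is finite if and only if $F$ is a regular system of parameters up to radical, and in that case the descending chain of subspaces $(I(F)+\mathfrak m^{m})/\mathfrak m^{m} \subseteq J_{n,m-1}$ stabilizes: there exists $N$ with $\dim_\C J_{n,m}/j(I(F)) = \mult F$ for all $m \ge N$. The content of the lemma is that this stabilization, when $\mult F > k$, is already visible at the crude level $m = k$ in the sense that $\dim_\C J_{n,k}/j(I(F)) > k$, and conversely.

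First I would establish the easy direction. If $\mult F > k$, consider the surjection $J_{n,k}/j(I(F)) = R_n/(I(F)+\mathfrak m^{k+1}) \twoheadleftarrow R_n/I(F)$ is the wrong way; rather there is a surjection $R_n/I(F) \twoheadrightarrow R_n/(I(F)+\mathfrak m^{k+1})$, so the dimension can only drop. To control how much it drops, I would use the filtration of $R_n/I(F)$ by powers of $\mathfrak m$: the associated graded pieces $\mathfrak m^i/(\mathfrak m^{i+1}+ (I(F)\cap\mathfrak m^i)\cdot\ldots)$ — more cleanly, set $h(i) = \dim_\C \mathfrak m^i(R_n/I(F)) / \mathfrak m^{i+1}(R_n/I(F))$, the Hilbert function of the local ring $R_n/I(F)$. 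Then $\dim_\C J_{n,k}/j(I(F)) = \sum_{i=0}^{k} h(i)$. Since $h(0) = 1$ always, and $h(i) \ge 1$ for every $i$ with $\mathfrak m^i(R_n/I(F)) \ne 0$ (Nakayama: if $h(i)=0$ then $\mathfrak m^i(R_n/I(F)) = \mathfrak m^{i+1}(R_n/I(F))$ forces $\mathfrak m^i(R_n/I(F))=0$ by Nakayama since $R_n/I(F)$ is a finite-dimensional local ring, hence $h(j)=0$ for all $j\ge i$), the Hilbert function $h$ is positive on an initial segment $\{0,1,\ldots,\ell-1\}$ and zero afterwards, where $\ell$ is the nilpotency index, and $\mult F = \sum_{i=0}^{\ell-1} h(i)$. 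If $\mult F > k$ then in particular $\ell > k$ (since each $h(i)\le \mult F$ is finite, but more to the point $\sum_{i<k} h(i) \le k$ would need $h(i)\le 1$ on that range, impossible to reach $>k$ unless the sum runs past $k$); carefully: $\sum_{i=0}^{k}h(i) \ge k+1$ precisely when $h(i)\ge 1$ for all $i \le k$, i.e. when $\ell \ge k+1$. And $\ell \ge k+1$ exactly when $\mathfrak m^k(R_n/I(F)) \ne 0$. So the lemma is equivalent to: $\mult F > k \iff \mathfrak m^k \not\subseteq I(F) + \mathfrak m^{\text{(large)}}$, which I would phrase as $\mathfrak m^k(R_n/I(F))\neq 0$.

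Thus the real content to verify is the equivalence $\mult F > k \iff \mathfrak m^k (R_n/I(F)) \ne 0 \iff h(k)\ne 0$. One direction: if $h(k) \ne 0$ then $h(i)\ne 0$ for all $i \le k$ (initial segment property just proved), so $\mult F = \sum h(i) \ge k+1 > k$. Conversely, if $h(k) = 0$ then $\mathfrak m^{k}(R_n/I(F)) = 0$, so $\mathfrak m^i(R_n/I(F))=0$ for $i \ge k$ and $\mult F = \sum_{i=0}^{k-1} h(i) \le$ ... this needs each $h(i) \le 1$? No — $h(i)$ can be large. So this elementary argument only gives one direction cleanly. The other direction ($\mult F > k \Rightarrow \dim J_{n,k}/j(I(F)) > k$) requires knowing that $h(i) \ge 1$ fails to be enough; instead I would argue directly: suppose $\dim_\C J_{n,k}/j(I(F)) \le k$, i.e. $\le k$; since $J_{n,k}/j(I(F))$ is a local ring of dimension $\le k$ with residue field $\C$, its maximal ideal $\bar{\mathfrak m}$ satisfies $\dim \bar{\mathfrak m} \le k-1$, so $\bar{\mathfrak m}^{k} = 0$ in it (a local Artinian ring with $\dim_\C \mathfrak n^{i}/\mathfrak n^{i+1} \ge 1$ until it vanishes, and total dimension $\le k$, has nilpotency index $\le k-1$... giving $\bar{\mathfrak m}^{k-1}$ possibly nonzero but $\bar{\mathfrak m}^k$ need not vanish — hmm, need $\le k$ total so indices $0,\dots$ contribute, nilpotency $\le k-1$ isn't forced either). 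The clean statement: in a local Artinian $\C$-algebra of vector-space dimension $d$, we have $\mathfrak n^{d} = 0$ (since the chain $\mathfrak n \supsetneq \mathfrak n^2 \supsetneq \cdots$ strictly decreases until it hits $0$, taking at most $d$ steps from $\mathfrak n^0 = $ whole ring, actually from $\mathfrak n^1$: at most $d-1$ strict drops among subspaces of a $(d-1)$-dim space, wait $\mathfrak n$ has dim $d-1$, chain $\mathfrak n \supsetneq \mathfrak n^2 \supsetneq \cdots \supsetneq 0$ has length $\le d-1$, so $\mathfrak n^{d-1}\cdot\mathfrak n$... anyway $\mathfrak n^{d} = 0$). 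Apply with $d = \dim_\C J_{n,k}/j(I(F)) \le k$: then in that ring $\bar{\mathfrak m}^{k} = 0$. Lifting back, $\mathfrak m^k \subseteq I(F) + \mathfrak m^{k+1}$, hence by Nakayama (Artin–Rees / the fact that $\mathfrak m$ is nilpotent mod $I(F)$ once $\mathfrak m^k \subseteq I(F)+\mathfrak m^{k+1}$ iterated) $\mathfrak m^k \subseteq I(F)$, so $R_n/I(F) = J_{n,k-1}/j(I(F))$ is already a quotient of $J_{n,k-1}$, and in fact $\mult F = \dim_\C R_n/I(F) = \dim_\C J_{n,k}/j(I(F)) \le k$. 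This is the heart of the argument and the step I expect to require the most care — in particular pinning down the Nakayama-type passage from $\mathfrak m^k \subseteq I(F) + \mathfrak m^{k+1}$ to $\mathfrak m^k \subseteq I(F)$, which works because $R_n$ is $\mathfrak m$-adically complete (or Noetherian local), so $\mathfrak m^k(R_n/I(F)) = \mathfrak m\cdot\mathfrak m^k(R_n/I(F))$ forces $\mathfrak m^k(R_n/I(F)) = 0$ by the classical Nakayama lemma applied to the finitely generated $R_n$-module $\mathfrak m^k(R_n/I(F))$ over the local ring $R_n$. Assembling the two directions then gives the stated equivalence.
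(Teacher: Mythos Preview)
Your final paragraph is correct and is essentially the paper's argument: from $\dim_\C J_{n,k}/j(I(F))\le k$ you deduce that the Artinian local ring $J_{n,k}/j(I(F))$ satisfies $\bar{\mathfrak m}^{\,k}=0$, lift this to $\mathfrak m^k\subset I(F)+\mathfrak m^{k+1}$, and then use Nakayama to conclude $\mathfrak m^k\subset I(F)$, whence $\mult F=\dim_\C J_{n,k}/j(I(F))\le k$. The paper does exactly this, citing \cite[Section~5.5]{AVG} for the nilpotency step and \cite[Prop.~7.12]{Eis:book} (completeness) for the lifting; your self-contained derivation of both steps is fine. The easy direction is the surjection $R_n/I(F)\twoheadrightarrow J_{n,k}/j(I(F))$ that you note but never explicitly invoke as a conclusion---it would help to state it.

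One correction to your exploratory middle: the asserted reformulation ``the lemma is equivalent to $\mult F>k\iff \mathfrak m^k(R_n/I(F))\ne0$'' (equivalently $\sum_{i\le k}h(i)>k\iff h(k)\ne0$) is not just unprovable by that method, it is \emph{false}. Take $n=2$, $F=(x^2,y^2)$, $k=3$: then $\mult F=4>3$ and $\dim_\C J_{2,3}/j(I(F))=4>3$, yet $\mathfrak m^3\subset(x^2,y^2)$, so $\mathfrak m^3(R_2/I(F))=0$ and $h(3)=0$. The Hilbert-function values $h(0),h(1),h(2),h(3)=1,2,1,0$ make the issue visible: the sum can exceed $k$ well before the support reaches index $k$. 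You rightly abandon this route; just be aware that the equivalence itself was the problem, not merely the proof attempt.
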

\begin{proof}
  First, since
  \begin{equation} \label{eq:jet-mult} J_{n,k}/j(I(F)) \simeq
    R_n/(I(F)+{\frak m}^{k+1})
  \end{equation}
  it is clear that $\dim_{\C} J_{n,k}/j(I(F))>k$ implies $\mult F>k$.
  Conversely, suppose that $\dim_{\C} J_{n,k}/j(I(F))\le k$. Then it
  known that $j({\frak m}^{k+1})\subset j(I(F))$ (see, for instance,
  \cite[Lemma, Section 5.5]{AVG}). Since $R_n$ is complete with
  respect to the $\frak m$-filtration, it follows from
  \cite[Proposition 7.12]{Eis:book} that
  ${\frak m}^{k+1}\subset I(F)$. Then it follows
  from~\eqref{eq:jet-mult} that $\dim_{\C} J_{n,k}/j(I(F))=\mult F$.
\end{proof}

\subsection{Construction of the multiplicity operators}

For $\bfi\in\N^n$ let $x^\bfi$ denote the corresponding monomial in
$R_n$.  For any co-ideal $B\subset\N^n$ we call the set
$x^B:=\{x^{\mathbf b}:b\in B\}$ a standard monomial set. We recall
another standard fact from Gr\"obner base theory.

\begin{Lem} \label{lem:standard-base} For any ideal
  $I\subset J_{n,k}$, the ring $J_{n,k}/I$ has a standard monomial
  base. 
\end{Lem}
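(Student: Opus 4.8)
The plan is to derive this from the existence of a Gröbner (standard) basis for the ideal $I$ with respect to some fixed monomial order on the polynomial ring, and then read off the quotient structure from the initial ideal.

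First I would fix a global monomial order $\prec$ on $\N^n$ (for instance the lexicographic or degree-lexicographic order), which induces a leading-term notion on the truncated ring $J_{n,k}=R_n/{\frak m}^{k+1}$: every nonzero $g\in J_{n,k}$ has a well-defined leading monomial $\LT_\prec(g)$ among the monomials $x^{\bfi}$ with $|\bfi|\le k$. Given the ideal $I\subset J_{n,k}$, let $\inideal_\prec(I)$ be the monomial ideal of $J_{n,k}$ generated by $\{\LT_\prec(g):g\in I\setminus\{0\}\}$. Since $J_{n,k}$ is a finite-dimensional $\C$-vector space spanned by the finitely many monomials of degree $\le k$, the ideal $\inideal_\prec(I)$ is spanned by a subset of these monomials, and I would let $B\subset\N^n$ be the set of exponents $\bfi$ with $|\bfi|\le k$ such that $x^{\bfi}\notin\inideal_\prec(I)$.

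The key step is the standard division argument. I claim the classes of $x^B=\{x^{\bfi}:\bfi\in B\}$ form a $\C$-basis of $J_{n,k}/I$. Spanning: given any $g\in J_{n,k}$, if $\LT_\prec(g)\in\inideal_\prec(I)$ then $\LT_\prec(g)=\LT_\prec(h)$ for some $h\in I$ (using that $\inideal_\prec(I)$ is generated by leading terms and that in $J_{n,k}$ there is no lower-order ``tail'' below degree $0$ to worry about once one subtracts), so subtracting an appropriate scalar multiple of $h$ strictly lowers $\LT_\prec(g)$; since $\prec$ is a well-order on the finite monomial set this terminates, leaving a representative supported on $x^B$. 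Linear independence: a nontrivial $\C$-combination of the $x^{\bfi}$, $\bfi\in B$, lying in $I$ would have its leading term in $\inideal_\prec(I)$, contradicting $x^{\bfi}\notin\inideal_\prec(I)$ for the leading $\bfi$. Finally I would check that $B$ is a co-ideal in $\N^n$ (downward closed): $\inideal_\prec(I)$ is a monomial ideal, so $x^{\bfi}\in\inideal_\prec(I)$ and $x^{\bfj}$ divisible by $x^{\bfi}$ with $|\bfj|\le k$ forces $x^{\bfj}\in\inideal_\prec(I)$; equivalently, if $x^{\bfj}\notin\inideal_\prec(I)$ then every divisor monomial is also outside it, which is exactly downward-closedness of $B$. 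Hence $x^B$ is a standard monomial base in the sense defined before the lemma.

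The main obstacle, and the only point requiring care, is adapting the usual Gröbner-basis division algorithm from the polynomial ring $R_n$ (or its localization) to the Artinian quotient $J_{n,k}$, where monomials of degree $>k$ are simply zero. The termination of the reduction is actually easier here since the monomial set is finite and $\prec$ well-orders it; the subtlety is only in matching leading terms — one must ensure that when $\LT_\prec(g)\in\inideal_\prec(I)$ one can realize this leading monomial as the leading term of an \emph{actual} element of $I$ (not merely a generator up to scaling), which follows because $\inideal_\prec(I)$ is by definition generated over $J_{n,k}$ by leading terms of elements of $I$ and multiplication by a monomial in $J_{n,k}$ sends a leading term either to another leading term of an element of $I$ or to $0$. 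Once this bookkeeping is in place, everything else is the standard argument.
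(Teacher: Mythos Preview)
Your argument is correct and is precisely the approach the paper takes: fix a monomial order, form the initial ideal $\LT(I)$, and observe that the monomials outside it give a basis of $J_{n,k}/I$, with the complement automatically a co-ideal. The paper states this in two sentences without spelling out the reduction and independence checks or the adaptation to the Artinian quotient, whereas you have filled in those routine details.
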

\begin{proof}
  Let $\LT(I)\subset\N^n$ denote the ideal generated by monomials
  which are leading terms of members of $I$, with respect to some
  monomial ordering. Then the monomials outside of $\LT(I)$ form a
  base for the ring $J_{n,k}/I$.
\end{proof}

A standard monomial set $x^B$ of size $k$ spans $J_{n,k}/j(I(F))$ if and
only if $J_{n,k}=\C\<x^B\>+j(I(F))$. We encode this information in the
linear map
\begin{equation}
  T^F_B : \C^{\abs B}\oplus J_{n,k}^{\oplus n} \to J_{n,k} \qquad
  (c_\bfi)_{\bfi\in B} \oplus u_1 \oplus \ldots \oplus u_n  \to
  \sum_{\bfi\in B} c_\bfi x^\bfi + \sum_i u_i j(f_i).
\end{equation}
For explicitness, we represent $T^F_B$ as a matrix with respect to the
basis consisting of $c_\bfi,\bfi\in B$ followed by the basis of
monomials for $J_{n,k}^{\oplus n}$. The set $x^B$ spans
$J_{n,k}/j(I(F))$ if and only if one of the maximal minors containing
the first $\abs{S}$ columns is non-zero.

The entries of the first $k$ columns of $T^F_B$ are constants (in
fact, each has one component $1$ and zero elsewhere). For the
remaining columns, each component is a Taylor coefficient of the map
$F$ of order up to $k$. We thus view minors of the matrix as
differential operators of order bounded by $k$.

\begin{Def}
  Let $\{\bmo_B^\alpha\}$ be the set of maximal minors of $T^F_B$ which
  contain the first $\abs B=k$ columns. We call each $\bmo_B^\alpha$ a
  \emph{basic multiplicity operator} of order $k$.
  We call any operator $\mo$ in the convex hull of this set
  a \emph{multiplicity operator} of order $k$.
\end{Def}

Each multiplicity operator $\mo$ is a differential operator of order
$k$ of homogeneity degree $\dim J_{n,k} - k={n+k \choose k}-k$.
Therefore when studying the behavior of multiplicity operators on
analytic maps $F\in\cO^n(D^n)$, there will be no loss of generality in
assuming $\norm{F}\le1$, and we will usually restrict to this case.

\begin{Ex}
For $F:(\C,0)\to(\C,0)$, $F(x)=\eta x+x^2$ the multiplicity operators are 
$\bmo_{\{1\}}^{\alpha}=\eta$ and $\bmo_{\{1,x\}}^{\alpha}=1$ for $x^B=\{1\}$ 
and $x^B=\{1,x\}$ correspondingly.
\end{Ex}
 To simplify the
notations, we denote by $\mo_p$ the differential functional obtained by
applying $\mo$, followed by evaluation at the point $p$.

The following proposition, a direct conclusion of the discussion
above, is the basic property of the multiplicity operators.

\begin{Prop}
  We have $\mult F_p>k$ if and only if $\mo_p(F)=0$ for all multiplicity
  operators of order $k$.
\end{Prop}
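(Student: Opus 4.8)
The plan is to unwind the definitions and reduce everything to Lemmas~\ref{lem:jet-mult} and~\ref{lem:standard-base}. First I would recall that, by Lemma~\ref{lem:jet-mult}, the condition $\mult F_p > k$ is equivalent to $\dim_{\C} J_{n,k}/j(I(F_p)) > k$, i.e.\ there is \emph{no} standard monomial set $x^B$ of size $k$ that spans $J_{n,k}/j(I(F_p))$. So I would rephrase the goal as: \emph{some} $x^B$ of size $k$ spans $J_{n,k}/j(I(F_p))$ if and only if \emph{some} basic multiplicity operator $\bmo_B^\alpha$ is nonzero at $p$; and then observe that a point of the convex hull vanishes iff all vertices vanish (over $\C$, a convex combination with positive weights of complex numbers vanishes iff each summand does — this is the only place convexity enters, and it is trivial).

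For the ``spanning $\Leftrightarrow$ some minor nonzero'' equivalence I would argue as follows. Fix $B$ with $|B| = k$. The map $T^F_B$ is surjective exactly when $x^B$ spans $J_{n,k}/j(I(F_p))$, by the displayed description of its image. Now $\dim_{\C} J_{n,k} = \binom{n+k}{k}$, and surjectivity of $T^F_B$ onto this target is equivalent to the matrix of $T^F_B$ having rank $\binom{n+k}{k}$, i.e.\ to some maximal (i.e.\ $\binom{n+k}{k} \times \binom{n+k}{k}$) minor being nonzero. The subtle point is the restriction to minors \emph{containing the first $|B| = k$ columns}: I need that $T^F_B$ is surjective iff some such distinguished minor is nonzero. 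One direction is clear (a nonzero minor of any shape forces the needed rank). For the other direction I would note that the first $k$ columns are the standard basis vectors $x^\bfi$, $\bfi \in B$, hence automatically linearly independent; so if the whole map is surjective one can always complete these $k$ columns to a basis of the column space using $\binom{n+k}{k} - k$ further columns (coming from the $u_i j(f_i)$ blocks), and the corresponding minor is nonzero. Conversely vanishing of all such distinguished minors means the span of the first $k$ columns together with all the $j(f_i)$-columns has dimension $< \binom{n+k}{k}$, i.e.\ $\C\langle x^B\rangle + j(I(F_p))$ is a proper subspace, i.e.\ $x^B$ does not span.

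Assembling: $\mult F_p > k$ $\iff$ no $x^B$ of size $k$ spans $J_{n,k}/j(I(F_p))$ (Lemma~\ref{lem:jet-mult} together with Lemma~\ref{lem:standard-base}, the latter guaranteeing that if the dimension is $\le k$ then a \emph{standard} monomial set of size $\le k$, hence one can pad to size exactly $k$, spans) $\iff$ for every $B$ of size $k$ every distinguished minor $\bmo_B^\alpha$ vanishes at $p$ $\iff$ every basic multiplicity operator vanishes at $p$ $\iff$ every multiplicity operator (convex combination) vanishes at $p$. The only mildly delicate bookkeeping is the padding argument from $\dim \le k$ to a spanning standard set of size \emph{exactly} $k$: if $x^{B_0}$ is a standard monomial base with $|B_0| \le k$, any superset $B \supseteq B_0$ with $|B| = k$ and $B$ still a co-ideal (e.g.\ add further minimal monomials of the complement) gives a spanning set, and such a co-ideal padding always exists since $\dim_{\C} J_{n,k} = \binom{n+k}{k} \ge k+1 > k$. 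I expect this padding/co-ideal compatibility check to be the one place requiring a sentence of care; everything else is a direct transcription of the construction.
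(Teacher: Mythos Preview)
Your approach is correct and is exactly what the paper intends: it labels the proposition ``a direct conclusion of the discussion above,'' and your write-up simply makes that discussion explicit via Lemmas~\ref{lem:jet-mult} and~\ref{lem:standard-base} together with the rank/minor characterization of surjectivity of $T^F_B$.

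One small slip worth fixing: the parenthetical claim that ``a convex combination with positive weights of complex numbers vanishes iff each summand does'' is false (e.g.\ $\tfrac12\cdot 1+\tfrac12\cdot(-1)=0$). You do not actually need this. The equivalence ``all basic operators vanish at $p$ $\Leftrightarrow$ all multiplicity operators vanish at $p$'' holds for the simpler reason that each basic operator $\bmo_B^\alpha$ is a vertex of the convex hull and hence itself a multiplicity operator, while in the other direction any convex combination of zeros is zero. Replace your convexity remark with this and the argument is clean.
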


\subsection{Effective decomposition in the local algebra}

The following proposition gives the main technical property of the
multiplicity operators that will be used throughout this paper.

\begin{Prop} \label{prop:decomposition} Let
  $F\in\cO^n(D^n)$ with $\norm{F}\le1$. Let $\bmo_B^\alpha$ be a basic
  multiplicity operator of order $k$. Assume
  $s=|\bmo_B^\alpha(F)(0)|>0$. Then for any $P\in J_{n,k}$ we have a
  decomposition
  \begin{equation} \label{eq:decomposition} 
  P = \sum_{\bfi\in B}
    c_\bfi x^\bfi + \sum U_i f_i + E, \quad E\in {\frak m}^{k+1}
  \end{equation}
  with
  \begin{equation*}
    \max(\abs{c_\bfi},\norm{U_i},\|E\|) \le \CD_{n,k} s^{-1} 
    \norm{P},
  \end{equation*}
 where $\CD_{n,k}$ is some universal constant.
\end{Prop}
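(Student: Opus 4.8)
The plan is to reduce the statement to a linear-algebra estimate about the matrix $T^F_B$ and then invoke Cramer's rule. Recall that $s=|\bmo_B^\alpha(F)(0)|$ is, by construction, the absolute value of one particular maximal minor of $T^F_B$ — namely the minor on the first $|B|=k$ columns together with some additional set $\alpha$ of columns whose entries are Taylor coefficients of $F$ of order $\le k$, evaluated at $0$. Since $s>0$, the corresponding $N\times N$ submatrix $M$ (where $N=\dim J_{n,k}$) is invertible; write its column index set as the first $k$ "constant" columns plus $n$-tuples of monomial-labelled columns drawn from the copies of $J_{n,k}$. Solving $T^F_B(\,\cdot\,)=P$ restricted to these columns is exactly the linear system $M\xi = p$, where $p$ is the coordinate vector of $P\bmod{\frak m}^{k+1}$ and $\xi$ collects the relevant coefficients $c_\bfi$ and some of the components of the $U_i$. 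The remaining components of $U_i$ are set to zero, which is why this produces a valid decomposition of the form \eqref{eq:decomposition} once we remember that $P$ and its image in $J_{n,k}$ differ by an element of ${\frak m}^{k+1}$, the error term $E$.

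Next I would make the size estimate explicit. By Cramer's rule each component of $\xi$ equals a ratio $\det(M_j)/\det(M)$, where $M_j$ is $M$ with its $j$-th column replaced by $p$. The denominator has absolute value exactly $s$. For the numerator, expand the determinant along the replaced column: it is a sum of at most $N$ terms, each a coordinate of $p$ (bounded by $\norm{P}$ up to the norm-equivalence constant between the $l_1$ norm on $J_{n,k}$ and $\norm\cdot$) times an $(N-1)\times(N-1)$ minor of $M$. Every entry of $M$ is either $0$, $1$, or a Taylor coefficient of some $f_i$ of order $\le k$; since $\norm{F}\le1$, Cauchy's estimates bound each such coefficient by $1$. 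Hence every minor of $M$ of any size is bounded in absolute value by a combinatorial quantity depending only on $N$ (hence only on $n$ and $k$) — for instance $N^{N/2}$ by Hadamard, or more crudely $N!$. Collecting these bounds gives $|\xi_j|\le \CD_{n,k}\, s^{-1}\norm{P}$ for a constant $\CD_{n,k}$ depending only on $n$ and $k$, which dominates $\max(|c_\bfi|,\norm{U_i})$. Finally, $E = P - \sum c_\bfi x^\bfi - \sum U_i f_i$ as an element of $R_n$, and after passing to jets the $J_{n,k}$-part vanishes by construction, so $E\in{\frak m}^{k+1}$; its norm is bounded by $\norm P$ plus the already-bounded contributions of the other terms (using $\norm{f_i}\le1$ and that $x^\bfi$ has unit norm), again giving a bound of the claimed shape after adjusting $\CD_{n,k}$.

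The one genuine subtlety — the step I expect to require the most care — is bookkeeping the relationship between the abstract ring elements in \eqref{eq:decomposition} and the finite-dimensional picture over $J_{n,k}$: one must be careful that "the components of $U_i$ not appearing among the selected columns are zero" is legitimate, i.e. that a solution of the truncated square system really does lift to the asserted decomposition with an honest error in ${\frak m}^{k+1}$, and that all norms are taken consistently (the fixed $l_1$ norm on polynomials versus $\norm\cdot$ on $\cO(D^n)$, with their equivalence constants absorbed into $\CD_{n,k}$). Everything else is a routine application of Cramer's rule together with Cauchy's inequality $|\hat f_i(\bfi)|\le\norm{f_i}\le1$ for Taylor coefficients, and a crude determinant bound.

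To summarize the order of operations: (1) identify $s$ with the nonzero maximal minor $\det M$ of $T^F_B$ and set up the square system $M\xi=p$; (2) solve by Cramer's rule, taking the unselected $U_i$-components to be $0$; (3) bound the Cramer numerators using Cauchy's estimate on the entries of $M$ and a Hadamard/factorial bound on minors, to get $\max(|c_\bfi|,\norm{U_i})\le\CD_{n,k}s^{-1}\norm P$; (4) define $E$ as the residual, check $E\in{\frak m}^{k+1}$, and bound $\norm E$; (5) absorb all norm-equivalence and combinatorial factors into a single constant $\CD_{n,k}$ depending only on $n$ and $k$.
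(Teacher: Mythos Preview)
Your proposal is correct and follows essentially the same argument as the paper: Cramer's rule on the selected maximal minor of $T^F_B$ gives the $c_\bfi$ and $U_i$ with the $s^{-1}\norm{P}$ bound (using Cauchy's estimates on the entries), and then $E$ is defined as the residual and bounded by the triangle inequality. The paper's proof is terser but identical in substance; the only detail it makes explicit that you leave implicit is that $s$ itself is bounded by a universal constant (since it is a minor in bounded Taylor coefficients), which is what allows the $\norm{P}$ term in the bound for $\norm{E}$ to be absorbed into $\CD_{n,k}\,s^{-1}\norm{P}$.
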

\begin{proof}
  The existence of such a decomposition with the corresponding
  estimates on $\abs{c_\bfi}$ and $\norm{U_i}$ is a direct consequence of Cramer's rule
  applied to the maximal minor of the matrix $T^F_B$ corresponding
  to $\bmo_B^\alpha$ (this gives~\eqref{eq:decomposition} without the $E$ term
  in the ring $J_{n,k}$).

  Since $\norm{F}\le1$, the Taylor coefficients of order $k$ of $F$
  are bounded by some universal constant (for instance by the Cauchy
  formula). Thus we may also assume that $s$ is bounded by a
  universal constant independent of $F$. The estimate on $\norm{E}$
  then follows, with an appropriate universal constant $\CD_{n,k}$,
  from
  \begin{equation}
    E = P - \sum_{\bfi\in B} c_\bfi x^\bfi - \sum U_i f_i
  \end{equation}
  using the triangle inequality.
\end{proof}

The following is an important corollary of
Proposition~\ref{prop:decomposition}. 

\begin{Lem} \label{lem:local-resultant}
  Let $F\in\cO^n(D^n)$ with $\norm{F}\le1$. Suppose a
  $k$-th multiplicity operator $\mo_0(F)$ is nonzero. Let
  $p_0,...,p_k\in J_{n,k}$ be some linearly independent polynomials of
  unit norm. Then there exists their linear combination
  $P=\sum c_ip_i$ of degree $k$ such that $\sum|c_i|=1$ and
  \begin{equation*}
    P = \sum_i U_i f_i + E
  \end{equation*}
  where
  \begin{equation*}
    \max(\norm{U_i},\norm{E})<\CD_{n,k} \abs{\mo_0(F)}^{-1} \text{ and }
    E\in{\frak m}^{k+1}.
  \end{equation*}
\end{Lem}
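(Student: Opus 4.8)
The plan is to exploit the observation that, in the decomposition~\eqref{eq:decomposition} furnished by Proposition~\ref{prop:decomposition}, the sole obstruction to writing an element of $J_{n,k}$ in the form $\sum_i U_i f_i+E$ with $E\in{\frak m}^{k+1}$ is the standard monomial part $\sum_{\bfi\in B}c_\bfi x^\bfi$, and this part ranges over a space of dimension $|B|=k$. Since we are given $k+1$ polynomials, their standard monomial parts must satisfy a nontrivial linear relation, and the corresponding linear combination of the $p_j$ will be the desired $P$. So the steps, in order, are: pass from the given multiplicity operator to a basic one; invoke Proposition~\ref{prop:decomposition} for each $p_j$; run a pigeonhole (dimension count) to annihilate the standard monomial component; and assemble the estimates by the triangle inequality.

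Concretely, I would first write $\mo$ as a convex combination of basic multiplicity operators; by the triangle inequality at least one of them, say $\bmo_B^\alpha$, satisfies $s:=\abs{\bmo_B^\alpha(F)(0)}\ge\abs{\mo_0(F)}>0$, and we fix it. Since $\norm{p_j}=1$, Proposition~\ref{prop:decomposition} applied to each $p_j$ (for $j=0,\dots,k$) yields
\[
  p_j=\sum_{\bfi\in B}c^{(j)}_\bfi x^\bfi+\sum_i U^{(j)}_i f_i+E^{(j)},\qquad E^{(j)}\in{\frak m}^{k+1},
\]
with $\max\bigl(\abs{c^{(j)}_\bfi},\norm{U^{(j)}_i},\norm{E^{(j)}}\bigr)\le\CD_{n,k}\,s^{-1}$. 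The linear map $\C^{k+1}\to\C^{|B|}$, $(c_0,\dots,c_k)\mapsto\bigl(\textstyle\sum_j c_j c^{(j)}_\bfi\bigr)_{\bfi\in B}$, goes from a $(k+1)$-dimensional space to a $k$-dimensional one, hence has a nonzero kernel vector, which we rescale so that $\sum_j\abs{c_j}=1$. For this choice the standard monomial part of $P:=\sum_j c_j p_j$ vanishes, so
\[
  P=\sum_i\Bigl(\sum_j c_j U^{(j)}_i\Bigr)f_i+\sum_j c_j E^{(j)}=:\sum_i U_i f_i+E,
\]
with $E\in{\frak m}^{k+1}$, and the triangle inequality gives $\norm{U_i}\le\sum_j\abs{c_j}\CD_{n,k}s^{-1}=\CD_{n,k}s^{-1}\le\CD_{n,k}\abs{\mo_0(F)}^{-1}$, and likewise $\norm{E}\le\CD_{n,k}\abs{\mo_0(F)}^{-1}$; moreover $P$ lies in $J_{n,k}$, hence has degree at most $k$, and $P\ne0$ since the $p_j$ are linearly independent. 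The estimate comes out non-strict; the strict form stated is cosmetic and follows from the slack already present in the universal constant of Proposition~\ref{prop:decomposition}.

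I do not anticipate a genuine obstacle: once Proposition~\ref{prop:decomposition} is in hand, the lemma is essentially a repackaging of it, and the only step carrying real content is the dimension count $\dim\C\<x^B\>=k<k+1$ that produces the linear relation. The two points deserving a line of care are the reduction from a general multiplicity operator to a basic one (handled by convexity and the triangle inequality) and the passage from $\le$ to $<$ (absorbed into the constant).
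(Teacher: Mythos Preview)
Your proposal is correct and follows essentially the same route as the paper: reduce to a basic multiplicity operator, apply Proposition~\ref{prop:decomposition} to each $p_j$, use the dimension count $|B|=k<k+1$ to find a nontrivial linear relation among the standard monomial parts, and normalize. Your treatment is in fact slightly more explicit than the paper's (you spell out the convexity reduction to a basic operator and the final triangle-inequality bookkeeping), but the argument is the same.
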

\begin{proof}
  It is clearly sufficient to prove the statement for every basic
  multiplicity operator $\mo=\bmo_B^\alpha$.
  
  According to Proposition~\ref{prop:decomposition} we have the
  following decompositions:
  \begin{equation*}
    p_j = \sum_{\bfi\in B} c_\bfi^j x^\bfi + \sum U_i^j f_i + 
   E_j, \quad j=0,...,k,
  \end{equation*}
  with $\norm{U_i^j}\le \CD_{n,k}\abs{\mo_0(F)}^{-1}$, $\norm{E_j}\le 
  C^{D}_{n,k}\abs{\mo_0(F)}^{-1}$ and $E_j\in{\frak m}^{k+1}$.
  
  Since $\abs{B}=k$, there is a non-trivial linear combination of
  these equations for which all of the $x^\bfi$ terms vanish.
  Normalizing the coefficients to $\sum|c_i|=1$, we obtain the
  required decomposition and estimates.
\end{proof}

\begin{Rem}
  Note that, in general, even if $g\in I(F)$ one cannot claim that
  there is a decomposition $g=\sum u_if_i$ with bounds as in
  Lemma~\ref{lem:local-resultant}. E.g. for $F=\eta x+x^2$ one has
  $\bmo_{\{1,x\}}^\alpha(F)=1$, but for $g=x$ the coefficients of the
  decomposition $g=(\eta^{-1}-\eta^{-2}x)F$ explode as $\eta\to 0$,
  whereas the coefficients of the decomposition
  $g=0\cdot 1+1\cdot x +0\cdot F$ of Lemma~\ref{prop:decomposition}
  remain bounded.
\end{Rem}

When $\bmo_B^\alpha(F)(0)\neq0$, Proposition~\ref{prop:decomposition}
gives estimates on the corresponding decompositions in the ring of
$k$-jets $J_{n,k}$. However, by Lemma~\ref{lem:jet-mult} in is natural
to expect that similar decompositions exist in the local ring of germs
of holomorphic functions. Our next goal is the corresponding
quantitative result.

\subsection{Effective division by $F$ with remainder in $x^B$}

Let $R_{n,t}$ be the linear subspace of $R_n$ consisting of series
with finite $\|\;\|_t$ norm, with
$\|\sum c_\alpha x^\alpha\|_t=\sum t^{|\alpha|}|c_\alpha|$.

Note that if $f\in\cO(D^n)$ and $\|f\|\le 1$ then absolute values of
all Taylor coefficients of $f$ are bounded by $1$, and therefore
$\|f\|_{1/2}\le 2^n$. Therefore, if $j^k(f)=0$ then
\begin{equation}\label{eq:norm equivalence}
  \|f\|_t\le (2t)^{k+1}\|f\|_{1/2}\le t^{k+1}2^{k+n+1}\|f\|
\end{equation}
for $0<t<1/2$.

\begin{Lem}\label{lem:weierstrass}
  Let $F\in\cO^n(D^n)$ with $\norm{F}\le1$. Assume
  $s=|\bmo^\delta_B(F)(0)|>0$.

  There exist positive universal
  constants $\epsilon_{n,k},\epsilon'_{n,k}, C_{n,k}>0$ such that for
  some $t$, $\epsilon'_{n,k}s\le t\le \epsilon_{n,k}s$, and for every
  $P\in R_{n,t}$ there exists a decomposition
  \begin{equation}\label{eq:weierstrass}
    P=\sum_{i=1}^nu_if_i+\sum_{\bfi\in B}
    c_\bfi x^\bfi,
  \end{equation}
  with
  \begin{equation}\label{eq:weierstrass bounds}
    \sum \|u_i\|_t + \|\sum_{\bfi\in B}
    c_\bfi x^\bfi\|_t\le C_{n,k}s^{-k-1}\|P\|_t.
  \end{equation}
\end{Lem}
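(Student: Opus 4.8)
The plan is to bootstrap from the jet-level decomposition of Proposition~\ref{prop:decomposition} to a genuine decomposition in the ring $R_{n,t}$ by an iteration (Newton-type) scheme, where at each step we kill the current remainder up to order $k$ and then re-expand the error term, which lives in $\frak m^{k+1}$ and is therefore small in $\|\cdot\|_t$ when $t$ is comparably small to $s$. Concretely, given $P\in R_{n,t}$, apply Proposition~\ref{prop:decomposition} to $j^k(P)$ to get $P = \sum_{\bfi\in B} c^{(1)}_\bfi x^\bfi + \sum_i U^{(1)}_i j(f_i) + (\text{lift of }E^{(1)})$, and set $P_1 := P - \sum_\bfi c^{(1)}_\bfi x^\bfi - \sum_i U^{(1)}_i f_i$. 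By construction $j^k(P_1)=0$, so $P_1\in\frak m^{k+1}$, and the key estimate~\eqref{eq:norm equivalence} gives $\|P_1\|_t \le C' t^{k+1} \|P_1\| $ — but one must be careful: the relevant inequality compares $\|\cdot\|_t$ to $\|\cdot\|_{1/2}$ or $\|\cdot\|$, and after multiplying by the $U^{(1)}_i$ the function $P_1$ need not have norm $\le 1$, so I would instead phrase the contraction directly in terms of $\|\cdot\|_t$ norms, using the submultiplicativity $\|gh\|_t \le \|g\|_t\|h\|_t$ together with the fact that a series in $\frak m^{k+1}$ satisfies $\|g\|_t \le (2t)^{k+1}\|g\|_{1/2}$ and that $\|\cdot\|_{1/2}$ of $P_1$ is controlled by the $\|\cdot\|_{1/2}$-norms of the pieces coming out of Cramer's rule (which are polynomials of degree $\le k$, hence have $\|\cdot\|_{1/2}$ norm comparable to their coefficient norm).

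The arithmetic then runs as follows. Proposition~\ref{prop:decomposition} furnishes bounds of the form $\max(|c^{(1)}_\bfi|, \|U^{(1)}_i\|) \le \CD_{n,k} s^{-1}\|j^k(P)\|$, and $\|j^k(P)\|$ is in turn controlled by $\|P\|_t$ up to a factor depending only on $t$ (for $t$ in a fixed range this is a universal constant, but tracking the $t$-dependence carefully is what eventually produces the $s^{-k-1}$ rather than $s^{-1}$). Passing to $P_1\in\frak m^{k+1}$ we gain a factor $(2t)^{k+1}$ from~\eqref{eq:norm equivalence}-type reasoning, i.e. $\|P_1\|_t \le C_{n,k} s^{-1} (2t)^{k+1} \cdot (\text{const}) \cdot \|P\|_t$ roughly; choosing $t = \epsilon_{n,k} s$ with $\epsilon_{n,k}$ small enough makes the composite factor $\theta := C_{n,k}' s^{-1} t^{k+1} s^{-(\text{powers})}\cdots$ strictly less than, say, $1/2$. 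Iterating, $P_m \to 0$ in $\|\cdot\|_t$ geometrically, and summing the telescoping pieces $\sum_m (\text{step }m)$ gives the decomposition~\eqref{eq:weierstrass} with a convergent geometric series, yielding the final bound~\eqref{eq:weierstrass bounds}. I would need to verify that $R_{n,t}$ is complete in $\|\cdot\|_t$ (a standard weighted-$\ell^1$ completeness) so the limiting series $u_i = \sum_m U^{(m)}_i$ and $\sum_\bfi (\sum_m c^{(m)}_\bfi) x^\bfi$ actually live in $R_{n,t}$.

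The main obstacle — and the delicate bookkeeping point — is extracting the exact power $s^{-k-1}$. A naive iteration where each step costs a factor $s^{-1}$ and gains $t^{k+1}\sim s^{k+1}$ would suggest the total cost is $s^{-1}$, not $s^{-k-1}$; the extra $k$ powers of $s^{-1}$ must come from the need to take $t$ proportional to $s$ (so that comparing $\|P\|_t$ with $\|P\|$ or with the jet norm $\|j^k P\|$ already costs $t^{-k}\sim s^{-k}$ in one direction), combined with the requirement that $t$ also be bounded above by $\epsilon_{n,k}s$ to make the iteration contract. Getting both the lower bound $\epsilon'_{n,k}s\le t$ and the upper bound $t\le \epsilon_{n,k}s$ to coexist with a contraction factor below $1$ forces a specific balancing of exponents, and writing out that balance carefully — rather than the mechanics of the iteration, which is routine once set up — is where the real work lies. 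A secondary subtlety is that the constant $\CD_{n,k}$ in Proposition~\ref{prop:decomposition} was obtained under the normalization $\|F\|\le 1$, which is preserved throughout, so this causes no trouble, but one should note that the basic operator index $\delta$ plays no role: as in Lemma~\ref{lem:local-resultant} it suffices to treat each $\bmo^\delta_B$ separately.
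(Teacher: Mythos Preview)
Your iteration scheme has a genuine gap: it never moves past the first step. After one application of Proposition~\ref{prop:decomposition} to $j^k(P)$ you obtain $P_1\in\frak m^{k+1}$, but then $j^k(P_1)=0$, so Cramer's rule returns the trivial decomposition $0=0$ and $P_2=P_1$. Proposition~\ref{prop:decomposition} acts only on the $k$-jet and is therefore a no-op on $\frak m^{k+1}$; it cannot by itself furnish a contraction operator on $\frak m^{k+1}_t$.

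The natural repair --- factor a monomial $x^\gamma\in\frak m^{k+1}$ as $x^{\gamma-\alpha}\cdot x^\alpha$ with $|\alpha|=k$ and feed $x^\alpha$ to Proposition~\ref{prop:decomposition} --- also fails to contract. The decomposition gives $x^\alpha=\sum_{\bfi\in B} c_\bfi x^\bfi + \sum U_i f_i + E$ with $|c_\bfi|\le \CD_{n,k}s^{-1}$, and since $B$ always contains the constant monomial $1$, the $\|\cdot\|_t$-norm of the $x^B$-part is of order $s^{-1}$ while $\|x^\alpha\|_t=t^k$. The ratio is $\sim s^{-1}t^{-k}$, which for $t\asymp s$ is $\sim s^{-k-1}\gg 1$: no choice of $t$ in the required range makes this a contraction. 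So the difficulty is not, as you suggest, merely bookkeeping of exponents; with these ingredients alone there is simply no contraction to be had.

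The paper's actual mechanism is different. It applies Lemma~\ref{lem:local-resultant} to a chain of monomials $1\mid x^{\alpha_1}\mid\cdots\mid x^{\alpha_k}=x^\alpha$ to produce a \emph{normalized} relation $\sum_{i=0}^k c_{\alpha,i}x^{\alpha_i}=\sum U_i f_i + E_\alpha$ with $\sum|c_{\alpha,i}|=1$ (not $O(s^{-1})$). A separate Valiron-type lemma (Lemma~\ref{lem:Valiron}) then selects $t$ in the window $[\epsilon'_{n,k}s,\epsilon_{n,k}s]$ so that one term $c_{\alpha,j}x^{\alpha_j}$ dominates the others and the error $E_\alpha$ in $\|\cdot\|_t$. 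Solving for that dominant monomial and multiplying through by $x^{\alpha-\alpha_j}$ yields the representation $x^\alpha = P_\alpha + \sum u_i f_i + E'_\alpha$ with $\|P_\alpha\|_t+\|E'_\alpha\|_t< A^{-1}\|x^\alpha\|_t$; \emph{this} is what gives the contraction on $\frak m^{k+1}_t$, and the need to find a single $t$ that works simultaneously for all $x^\alpha$ with $|\alpha|=k$ is precisely what produces the lower bound $t\ge\epsilon'_{n,k}s$. Your proposal contains neither the normalization step nor the Valiron selection of $t$, and without them the argument cannot close.
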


The strategy of the proof is as follows. Let $A>2$ be fixed
(eventually $A=3$).

We begin by showing that for some $t$ in the prescribed range, and for
every monomial $x^\alpha,|\alpha|=k$, there exists a decomposition
\begin{equation} 
  x^{\alpha}=P_{\alpha}(x)+\sum u_{i,\alpha}f_i+E'_\alpha,\qquad\deg 
  P_\alpha(x)<k, \quad E'_\alpha\in{\frak m}^{k+1}
\end{equation}
with
\begin{equation}
  \|P_\alpha\|_t+\|E'_\alpha\|_t<A^{-1}\|x^\alpha\|_t.
\end{equation}
By multiplying this decomposition by an arbitrary monomial and
applying~\eqref{eq:decomposition} we get for any monomial
$x^{\alpha}\in{\frak m}^{k+1}$, and, by linearity, for any
$f\in {\frak m}^{k+1}_t$ the decomposition
\begin{equation}\label{eq:operator decomposition}
  f=\pi_B(f)+U(f)\cdot(f_1,...,f_n)^T+E(f),
\end{equation}
where
\begin{equation}
  \pi_B:{\frak m}^{k+1}_t\to\bigoplus_{\bfi\in B}\C x^\bfi,\quad
  U:{\frak m}^{k+1}_t\to R_{n,t}^{\oplus n},\quad
  E: {\frak m}^{k+1}_t\to {\frak m}^{k+1}_t
\end{equation}
are some explicitly bounded linear operators, and the ``remainder
operator'' $E$ has norm $2A^{-1}<1$. Therefore
\begin{equation}\label{eq:operator decomposition2}
  f=\pi_B(1-E)^{-1}(f)+U(1-E)^{-1}(f)\cdot(f_1,...,f_n)^T,
\end{equation}
which is exactly \eqref{eq:weierstrass} for elements of 
$\frak{m}^{k+1}$. The general case then follows easily from 
Proposition~\ref{prop:decomposition}.

\begin{proof}
  Let $\epsilon_{n,k}^{-1}=2^{n+k+1}C^{D}_{n,k}$ and let
  $C_{n,k}(A)^{-1}=(2A+1)^{\binom{n+k-1}{k}2(k+1)}(k+1)$.

\begin{Lem}\label{lem:lem1ofDecomp}
  For any $A>1$ there exists some $t$,
  \begin{equation}\label{eq: t bounds}
    C_{n,k}(A)\epsilon_{n,k}s\le t \le \epsilon_{n,k}s,
  \end{equation}
  such that any monomial $x^{\alpha}$, $|\alpha|=k$ can be represented
  as
  \begin{equation}\label{eq:Monomial representation}
    x^{\alpha}=P_{\alpha}(x)+\sum u_{i,\alpha}f_i+E'_\alpha,\qquad\deg 
    P_\alpha(x)<k, \quad E'_\alpha\in{\frak m}^{k+1}.
  \end{equation}
  with
  \begin{equation}\label{eq:decomposition bounds}
    \|P_\alpha\|_t+\|E'_\alpha\|_t<A^{-1}\|x^\alpha\|_t, \quad 
    \|u_{i,\alpha}\|_t\le 2C^D_{n,k}s^{-1}t^{-k} \|x^\alpha\|_t.
  \end{equation} 
\end{Lem}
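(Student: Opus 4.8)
The plan is to prove Lemma~\ref{lem:lem1ofDecomp} by bootstrapping Proposition~\ref{prop:decomposition} across a controlled range of scales $t$, using a pigeonhole/averaging argument over the exponents of $t$. The starting observation is that for a fixed monomial $x^\alpha$ with $|\alpha|=k$, Proposition~\ref{prop:decomposition} (applied with $P=x^\alpha$) yields
\begin{equation*}
  x^\alpha = \sum_{\bfi\in B} c_\bfi x^\bfi + \sum_i U_i f_i + E', \quad E'\in{\frak m}^{k+1},
\end{equation*}
with $\max(|c_\bfi|,\|U_i\|,\|E'\|)\le \CD_{n,k}s^{-1}$. The point is that the term $\sum_{\bfi\in B}c_\bfi x^\bfi$ has degree $<k$ (here I am slightly abusing the fact that $x^B$ is a standard monomial set of size $k$, so all its members have degree $<k$), and the only part of the decomposition that fails to have the desired smallness in the $\|\cdot\|_t$ norm is the $c_\bfi x^\bfi$ part, since low-degree monomials get a \emph{larger} $\|\cdot\|_t$ weight relative to $x^\alpha$ as $t\to 0$. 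So I would first convert the $\|\cdot\|$ bounds into $\|\cdot\|_t$ bounds: $\|x^\alpha\|_t=t^k$, $\|c_\bfi x^\bfi\|_t\le \CD_{n,k}s^{-1}t^{|\bfi|}$, and $\|U_i\|_t\le \CD_{n,k}s^{-1}\cdot 2^{n}\cdot(\text{something})$ — more carefully, using~\eqref{eq:norm equivalence}-type estimates and $t\le\epsilon_{n,k}s\le 1/2$, one checks $\|U_i f_i\|_t$ and $\|E'\|_t$ are controlled. The genuine obstruction is the low-degree tail $P_\alpha:=\sum_{\bfi\in B}c_\bfi x^\bfi$, whose $\|\cdot\|_t$ norm is roughly $\CD_{n,k}s^{-1}t^{|\bfi|}$ and hence is \emph{not} automatically $\le A^{-1}t^k$ unless $t$ is small — but we need $t\gtrsim s$, so we cannot just shrink $t$.

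The resolution, and this is the key step, is to iterate. Write the decomposition at scale $t$; the offending low-degree part $P_\alpha$ is itself a polynomial of degree $<k$ which we now re-expand monomial by monomial, not via Proposition~\ref{prop:decomposition} directly (that only handles degree-$k$ input cleanly after multiplying by a suitable monomial to push into ${\frak m}^{k+1}$), but by the following device: multiply each low-degree monomial $x^\bfi$, $|\bfi|=m<k$, by monomials to raise its degree to exactly $k$, apply Proposition~\ref{prop:decomposition} again, and collect. Each pass trades a factor of $t^{k-m}$ (the gap between the current degree and $k$) against a factor $\CD_{n,k}s^{-1}$; the ratio $\CD_{n,k}s^{-1}t^{k-m}\le \CD_{n,k}s^{-1}(\epsilon_{n,k}s)^{k-m} = \CD_{n,k}\epsilon_{n,k}s^{k-m-1}$ — and here is where the pigeonhole over scales enters: rather than fixing one $t$, I consider a geometric ladder of scales $t, At, A^2 t,\dots$ inside the allowed window $[C_{n,k}(A)\epsilon_{n,k}s,\ \epsilon_{n,k}s]$, whose length (number of rungs) is exactly engineered by the definition $C_{n,k}(A)^{-1}=(2A+1)^{\binom{n+k-1}{k}2(k+1)}(k+1)$ so that there are enough rungs to absorb all $\binom{n+k-1}{k}$ low-degree monomial types through all $\le k$ degrees. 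At an appropriately chosen rung $t$ in this ladder, the contraction constant $2A^{-1}$ claimed for the operator $E$ in the outline can be met for \emph{every} degree-$k$ monomial simultaneously.

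Concretely, the steps in order: (1) fix $A>1$ and set $\epsilon_{n,k}$, $C_{n,k}(A)$ as in the statement; (2) for each scale $t_j = (2A+1)^j\,C_{n,k}(A)\epsilon_{n,k}s$ in the ladder and each degree-$k$ monomial $x^\alpha$, apply Proposition~\ref{prop:decomposition} and record the $\|\cdot\|_{t_j}$-norms of the pieces using the norm comparisons above — the $U_i$-bound $\|u_{i,\alpha}\|_t\le 2\CD_{n,k}s^{-1}t^{-k}\|x^\alpha\|_t$ drops out directly since $\|x^\alpha\|_t=t^k$ and the $l_1$-type bound on $U_i$'s coefficients gives $\|u_{i,\alpha}\|_t\le \CD_{n,k}s^{-1}\cdot(\text{sum of }t\text{-weights})\le 2\CD_{n,k}s^{-1}$; (3) for the low-degree remainder, run the iteration, noting each re-expansion multiplies the $\|\cdot\|_t$-norm by at most $\binom{n+k-1}{k}\CD_{n,k}s^{-1}t$ (one extra degree gap at minimum, times the number of monomials of each degree) — here $\CD_{n,k}s^{-1}t\le\CD_{n,k}\epsilon_{n,k} = 2^{-(n+k+1)}<1$, so the iteration converges geometrically and the total low-degree mass after all passes is bounded by a geometric series; (4) choose the rung $t_j$ for which this total is $<A^{-1}\|x^\alpha\|_{t_j}$ — the counting in $C_{n,k}(A)$ guarantees such a rung exists within the window. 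I expect step (3), bookkeeping the iteration so that the geometric decay beats the combinatorial growth in the number of monomials, to be the main obstacle; everything else is norm juggling via~\eqref{eq:norm equivalence} and Cramer's rule already packaged in Proposition~\ref{prop:decomposition}.
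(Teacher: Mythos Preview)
There is a genuine gap in your step~(3). The claimed contraction ``each re-expansion multiplies the $\|\cdot\|_t$-norm by at most $\binom{n+k-1}{k}\CD_{n,k}s^{-1}t$'' does not hold. Consider the constant term $c_0\cdot 1$ in your low-degree remainder $P_\alpha$ (recall $1\in x^B$ always, since $B$ is a co-ideal). Your recipe says to multiply by a degree-$k$ monomial $x^\gamma$ and reapply Proposition~\ref{prop:decomposition}; but this decomposes $x^\gamma$, not the constant $1$, and gives no handle on $c_0$. If instead you apply Proposition~\ref{prop:decomposition} directly to $x^\bfi$ with $\bfi\in B$, Cramer's rule on the chosen minor returns the trivial decomposition $x^\bfi=x^\bfi$ with zero error---no progress. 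There is simply no operation in your toolkit that expresses a low-degree monomial in terms of the $f_i$ and \emph{higher}-degree terms with a gain of a factor of $t$, so the iteration stalls rather than contracting geometrically, and the pigeonhole over scales in step~(4) has nothing to bite on.

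The paper's argument avoids this by working with a different intermediate object. For each degree-$k$ monomial $x^\alpha$ it fixes a chain $1=x^{\alpha_0}\mid x^{\alpha_1}\mid\cdots\mid x^{\alpha_k}=x^\alpha$ of $k+1$ monomials and applies Lemma~\ref{lem:local-resultant} to them, producing a single relation $\sum_{i=0}^k c_{\alpha,i}x^{\alpha_i}=\sum u_i f_i + E_\alpha$ with $\sum|c_{\alpha,i}|=1$ and controlled $u_i,E_\alpha$. The scale $t$ is then chosen, simultaneously for all $\alpha$, via the Valiron-type Lemma~\ref{lem:Valiron}: for the sequences $(|c_{\alpha,0}|,\ldots,|c_{\alpha,k}|,2^{n+k+1}\|E_\alpha\|)$ it produces a $t$ in the window~\eqref{eq: t bounds} such that for each $\alpha$ a single term $c_{\alpha,j(\alpha)}x^{\alpha_{j(\alpha)}}$ dominates all the others (including $\|E_\alpha\|_t$) by a factor $A$. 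Dividing the relation by $c_{\alpha,j(\alpha)}$ and multiplying throughout by $x^{\alpha-\alpha_{j(\alpha)}}$ then yields~\eqref{eq:Monomial representation} with the bounds~\eqref{eq:decomposition bounds}. The two ingredients you are missing are Lemma~\ref{lem:local-resultant} (which produces a relation \emph{among} monomials of different degrees rather than trying to reduce one to another) and the Valiron lemma (which replaces your pigeonhole over scales with a precise selection mechanism); the constant $C_{n,k}(A)$ is sized exactly to accommodate the latter.
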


\begin{proof}
  Let $\{x^{\alpha_i}\}_{i=0}^k$ be a chain of length $k+1$ of monomials
  ending with $x^\alpha$, such that each monomial is divisible by the
  previous one (so $\deg x^{\alpha_i}=i$).

  By Lemma~\ref{lem:local-resultant} there exists some linear
  combination $\sum_{i=0}^k c_{\alpha,i}x^{\alpha_i}$ with
  $\sum|c_i|=1$ such that
  \begin{equation}
    \sum_{i=0}^k
    c_{\alpha,i} x^{\alpha_i}= \sum u_{\alpha,i} f_i + E_\alpha,\quad 
    E_\alpha\in m^{k+1},\ \deg u_{\alpha,i}\le k
  \end{equation}
  and
  \begin{equation}
    \max_{i,\alpha}\|u_{\alpha,i}\|\le C^D_{n,k}s^{-1}, \quad \|E_\alpha\|\le 
    C^{D}_{n,k}s^{-1}.
  \end{equation}
  Here the norm is the supremum norm on the unit polydisc.

  Apply Lemma~\ref{lem:Valiron} below to the set of the sequences
  $\{|c_{\alpha,0}|,...,|c_{\alpha,k}|,2^{n+k+1}\|E_\alpha\|\}$, with
  $M=2^{n+k+1}C^{D}_{n,k}s^{-1}$ and $t_0=\epsilon_{n,k}s$. As
  \begin{equation}\label{eq:epsilon condition}
    M(k+1)>\left(\epsilon_{n,k}s\right)^{-1}
  \end{equation}
  we conclude existence of some $t$ satisfying \eqref{eq: t bounds}
  and some $j(\alpha)$ such that
  \begin{align}\label{eq:decomp bounds1}
    \|c_{\alpha,j(\alpha)}x^{\alpha_{j(\alpha)}}\|_t &\ge
     A\bigg[\|\sum_{i\not=j}c_{\alpha,i}x^{\alpha_i}\|_t+t^{k+1}2^{k+n+1}
      \|E_\alpha\|\bigg]\
    \\ \nonumber
    & \ge 
    A\bigg[\|\sum_{i\not=j}c_{\alpha,i}x^{\alpha_i}\|_t+\|E_\alpha\|_t\bigg],
  \end{align}
  and
  \begin{align}\label{eq:decomp bounds2}
    \|u_{\alpha,i}\|_t &\le \|u_{\alpha,i}\|_1\le  C^D_{n,k}s^{-1}\|\sum 
    c_{\alpha,i}x^{\alpha_i}\|_1\le 
    C^D_{n,k}s^{-1}t^{-k}\|\sum c_{\alpha,i}x^{\alpha_i}\|_t \\ \nonumber
    & \le
    C^D_{n,k}t^{-k}s^{-1}(1+A^{-1})\|c_{\alpha,j(\alpha)}
    x^{\alpha_{j(\alpha)}}\|_t.
  \end{align}
  Dividing by $c_{\alpha, j(\alpha)}$ and multiplying by
  $x^{\alpha-\alpha_{j(\alpha)}}$ we get the required decomposition.
\end{proof}

\begin{Lem}
  For any monomial $x^\alpha\in\frak{m}^{k+1}$ there exists a
  representation
  \begin{equation}\label{eq:decomposition to B}
    x^\alpha=\sum_{\bfi\in B} c_{\bfi,\alpha}x^\bfi+\sum 
    \tilde{u}_{i,\alpha}f_i+\tilde{E}_\alpha, \quad E_\alpha\in{\frak m}^{k+1}
  \end{equation}
  with
  \begin{align}
    \|\tilde{E}_\alpha\|_t&\le 
    2A^{-1}\|x^\alpha\|_t,\\
    \|\sum_{\bfi\in B} c_{\bfi,\alpha}x^\bfi\|_t&\le 
    (k+1)C^D_{n,k}C_{n,k}(A)^{-k}s^{-k-1}A^{-1} \|x^\alpha\|_t,\\
    \|\tilde{u}_{i,\alpha} 
    \|_t&\le 2^{n+1}C^D_{n,k}C_{n,k}(A)^{-k}s^{-k-1}\|x^\alpha\|_t,
  \end{align}
  where $A>2$ and $t$ is as in Lemma~\ref{lem:lem1ofDecomp}.
\end{Lem}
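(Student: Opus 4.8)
The strategy is to peel off a degree-$k$ monomial from $x^\alpha$, invoke Lemma~\ref{lem:lem1ofDecomp}, and then tidy up the resulting low-degree polynomial with Proposition~\ref{prop:decomposition}; the argument is uniform in $|\alpha|\ge k+1$. Given $x^\alpha\in{\frak m}^{k+1}$, choose a multi-index $\beta\le\alpha$ with $|\beta|=k$ and put $\gamma=\alpha-\beta$, so $|\gamma|=|\alpha|-k\ge1$. Lemma~\ref{lem:lem1ofDecomp} applied to $x^\beta$ gives
\[
  x^\beta=P_\beta+\sum_i u_{i,\beta}f_i+E'_\beta,\qquad \deg P_\beta<k,\quad E'_\beta\in{\frak m}^{k+1},
\]
with $\|P_\beta\|_t+\|E'_\beta\|_t<A^{-1}t^k$ and $\|u_{i,\beta}\|_t\le 2C^D_{n,k}s^{-1}$ (using $\|x^\beta\|_t=t^k$). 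Multiplying by $x^\gamma$ and writing $Q:=x^\gamma P_\beta=Q_{\le k}+Q_{>k}$, where $Q_{\le k}$ (resp. $Q_{>k}$) collects the monomials of degree $\le k$ (resp. $>k$), one has $Q_{>k}\in{\frak m}^{k+1}$ while $Q_{\le k}=x^\gamma(P_\beta)_{\le k-|\gamma|}\in J_{n,k}$ (and $Q_{\le k}=0$ if $|\gamma|>k$). Applying Proposition~\ref{prop:decomposition} to $Q_{\le k}$ gives $Q_{\le k}=\sum_{\bfi\in B}c_\bfi x^\bfi+\sum_i U_if_i+E''$ with $E''\in{\frak m}^{k+1}$ and $\max(|c_\bfi|,\|U_i\|,\|E''\|)\le C^D_{n,k}s^{-1}\|Q_{\le k}\|$. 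Collecting everything,
\[
  x^\alpha=\sum_{\bfi\in B}c_\bfi x^\bfi+\sum_i(x^\gamma u_{i,\beta}+U_i)f_i+(x^\gamma E'_\beta+Q_{>k}+E''),
\]
which is the required decomposition with $c_{\bfi,\alpha}=c_\bfi$, $\tilde u_{i,\alpha}=x^\gamma u_{i,\beta}+U_i$, and $\tilde E_\alpha=x^\gamma E'_\beta+Q_{>k}+E''\in{\frak m}^{k+1}$.

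The bounds then follow from a handful of norm comparisons. Since $\deg P_\beta<k$, the truncation $(P_\beta)_{\le k-|\gamma|}$ has degree $\le k-|\gamma|$, so $\|Q_{\le k}\|\le\|Q_{\le k}\|_1=\|(P_\beta)_{\le k-|\gamma|}\|_1\le t^{-(k-|\gamma|)}\|P_\beta\|_t<A^{-1}t^{|\gamma|}$. For the error: the terms $x^\gamma E'_\beta$ and $Q_{>k}$ together contribute at most $t^{|\gamma|}(\|E'_\beta\|_t+\|P_\beta\|_t)<A^{-1}t^{|\alpha|}=A^{-1}\|x^\alpha\|_t$, while for $E''\in{\frak m}^{k+1}$ the norm equivalence \eqref{eq:norm equivalence} gives $\|E''\|_t\le 2^{k+n+1}t^{k+1}\|E''\|\le 2^{k+n+1}C^D_{n,k}s^{-1}A^{-1}t^{k+1+|\gamma|}$, and since $t\le\epsilon_{n,k}s$ with $\epsilon_{n,k}^{-1}=2^{n+k+1}C^D_{n,k}$ the scalar $2^{k+n+1}C^D_{n,k}s^{-1}t$ is $\le1$, so $\|E''\|_t\le A^{-1}\|x^\alpha\|_t$ and hence $\|\tilde E_\alpha\|_t<2A^{-1}\|x^\alpha\|_t$. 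For the $x^\bfi$ part, $\|\sum_\bfi c_{\bfi,\alpha}x^\bfi\|_t\le|B|\max_\bfi|c_{\bfi,\alpha}|\le kC^D_{n,k}A^{-1}s^{-1}t^{|\gamma|}$; writing $t^{|\gamma|}=t^{-k}\|x^\alpha\|_t$ and using the lower bound $t\ge C_{n,k}(A)\epsilon_{n,k}s$ from \eqref{eq: t bounds} to estimate $t^{-k}$ turns this into $(k+1)C^D_{n,k}C_{n,k}(A)^{-k}s^{-k-1}A^{-1}\|x^\alpha\|_t$ once the universal constant $C^D_{n,k}$ is enlarged to absorb $\epsilon_{n,k}^{-k}$. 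Finally $\|\tilde u_{i,\alpha}\|_t\le\|x^\gamma u_{i,\beta}\|_t+\|U_i\|_1\le 2C^D_{n,k}s^{-1}t^{|\gamma|}+\|U_i\|_1$, and $\|U_i\|_1$ is bounded by a constant depending only on $n,k$ times $\|U_i\|\le C^D_{n,k}s^{-1}A^{-1}t^{|\gamma|}$ (equivalence of norms on polynomials of degree $\le k$); the same substitution $t^{|\gamma|}=t^{-k}\|x^\alpha\|_t$ with $t^{-k}\le(C_{n,k}(A)\epsilon_{n,k}s)^{-k}$ yields the claimed $2^{n+1}C^D_{n,k}C_{n,k}(A)^{-k}s^{-k-1}\|x^\alpha\|_t$.

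The genuine content, beyond routine bookkeeping, is the two-sided control on $t$ in \eqref{eq: t bounds}: the upper bound $t\le\epsilon_{n,k}s$ is exactly what makes the Proposition~\ref{prop:decomposition} remainder $E''$, whose natural estimate is in the supremum norm and acquires a factor $t^{k+1}$ when passed to $\|\cdot\|_t$, stay below $A^{-1}\|x^\alpha\|_t$, while the lower bound $t\ge C_{n,k}(A)\epsilon_{n,k}s$ is what lets the factors $t^{-k}$ forced by the low-degree polynomials $(P_\beta)_{\le k-|\gamma|}$ and $U_i$ be traded for $s^{-k}$, producing the $s^{-k-1}$ in the final estimates. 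Accordingly, the one place needing care is the degree bookkeeping: one must work with $Q_{\le k}=x^\gamma(P_\beta)_{\le k-|\gamma|}$ rather than with all of $x^\gamma P_\beta$, so that the gain $A^{-1}$ furnished by Lemma~\ref{lem:lem1ofDecomp} is not lost and the truncated polynomial has degree at most $k-|\gamma|$, making the comparison $\|g\|_1\le t^{-\deg g}\|g\|_t$ strong enough. With this in place, all three estimates reduce to the triangle inequality together with \eqref{eq:norm equivalence}.
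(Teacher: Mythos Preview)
Your proof is correct and follows essentially the same route as the paper's: peel off a degree-$k$ factor, apply Lemma~\ref{lem:lem1ofDecomp}, multiply back, separate the low-degree part of the product, and tidy it up with Proposition~\ref{prop:decomposition}, using the two-sided bound on $t$ from~\eqref{eq: t bounds} exactly as you describe. The only cosmetic differences are that the paper treats the case $|\gamma|\ge k+1$ (where $Q_{\le k}=0$) separately rather than uniformly, and bounds $\|R\|$ via $\|R\|_1\le t^{-k}\|R\|_t$ together with $\|R\|_t<A^{-1}\|x^\alpha\|_t$, whereas you reach the same estimate $\|Q_{\le k}\|<A^{-1}t^{|\gamma|}$ by first truncating $P_\beta$ to degree $\le k-|\gamma|$; both computations give the same constant.
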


\begin{proof}
  Multiplying \eqref{eq:Monomial representation} by a monomial
  $x^\beta$, we get a similar representation for $x^{\alpha+\beta}$.
  If $|\beta|\ge k+1$ then the product
  $x^\beta\left(P_{\alpha}(x)+E'_\alpha\right)$ is already in
  $\frak{m}^{k+1}$ and the bounds follow trivially as soon as $A>2$.

  If, however, $|\beta|\le k$, then some monomials from the product
  could have degree smaller than $k+1$. Denote the sum of these
  monomials by $R$. Evidently, $\deg R\le k$ and
  $\|R\|_t<A^{-1}\|x^\alpha\|_t$. Using
  Proposition~\ref{prop:decomposition}, one can write
  \begin{equation}
    R=\sum_{\bfi\in B}
    c_\bfi x^\bfi + \sum U_i f_i + E, \quad E\in {\frak m}^{k+1}.
  \end{equation}
  We now express the bounds of Proposition~\ref{prop:decomposition} in
  terms of $\|\cdot \|_t$ norms. We have
  \begin{gather}
    \|\sum_{\bfi\in B}
    c_\bfi x^\bfi\|_t\le k\|\sum_{\bfi\in B}
    c_\bfi x^\bfi\|\le kC^D_{n,k}s^{-1} \|R\|\le 
    kC^D_{n,k}t^{-k}s^{-1}\|R\|_t,
    \\
    \|U_i\|_t\le 2^n\|U_i\|\le 2^nC^D_{n,k}t^{-k}s^{-1}\|R\|_t,
  \end{gather}
  and
  \begin{equation}\begin{aligned}
    \|E\|_t&\le t^{k+1}2^{k+n+1}\|E\|\le C^{D}_{n,k}2^{k+n+1}t^{k+1}s^{-1}\|R\| \\
    &\le  C^{D}_{n,k}2^{k+n+1}ts^{-1}\|R\|_t.
  \end{aligned}\end{equation}
  Together with bounds \eqref{eq: t bounds} and
  \eqref{eq:decomposition bounds} of Lemma~\ref{lem:lem1ofDecomp} this
  ends the proof.
\end{proof}
 
Extending \eqref{eq:decomposition to B} by linearity to $\frak{m}^k$,
we get the operator identity \eqref{eq:operator decomposition} with
operators $\pi_B, U$ bounded. Choosing $A=3$ and
$\epsilon'_{n,k}=C_{n,k}(3)\epsilon_{n,k}$, we get $\|E\|\le 2/3$ and
$\|(1-E)^{-1}\|\le 3$, which implies \eqref{eq:operator
  decomposition2} for $f\in{\frak m}^{k+1}$.

For arbitrary $f\in R_{n,t}$, we first decompose its $k$-th jet $j^k(f)$
as in Proposition~\ref{prop:decomposition}, and apply
\eqref{eq:operator decomposition2} to the remainder $E$. One can see
as above that $\|E\|_t\le C'_{n,k}\|f\|_t$ for some absolute constant
$C'_{n,k}$, and the result follows.
\end{proof}

\subsection{Lemma \`a la Valiron}

\begin{Lem}\label{lem:Valiron}
Let $\vec{a^j}=(a_{j,0},...,a_{j,k+1})$ be $N$ sequences of non-negative 
numbers satisfying
\begin{equation}\label{eq:Valiron condition}
a_{j,0}+...+a_{j,k}=1, \quad a_{j,k+1}\le M.
\end{equation}
For any $A>1, t_0>0$ there exist
$t\in(0,t_0]$, and 
$i(j), 0\le i(j)\le k$ for $j=1,...,N$ such that 
\begin{equation}\label{eq:choosing norm Lem}
t^{i(j)}a_{i(j)}\ge A\sum_{i\not= i(j)} t^ia_{j,i}\end{equation}
and
\begin{equation}\label{eq:Valiron bound on t}
t\ge \left(2A+1\right)^{-2N(k+1)}\min\left[t_0,\left(M(k+1)\right)^{-1}\right].
\end{equation}

\end{Lem}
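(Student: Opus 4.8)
The plan is to find a good scale $t$ by a pigeonhole argument over a geometric sequence of candidate scales. First I would observe that for a single sequence $\vec{a^j}$, the inequality \eqref{eq:choosing norm Lem} asks that at one scale $t$ the term $t^{i}a_{j,i}$ (for some index $i=i(j)\le k$) dominates the sum of all the other terms $t^{i'}a_{j,i'}$, $i'\ne i$, by a factor $A$. For a fixed $j$, if I range $t$ over a geometric progression $t_0,\rho t_0,\rho^2 t_0,\dots$ with ratio $\rho=(2A+1)^{-2(k+1)}$ (or some similar explicit ratio), then at consecutive scales the relative weights of the $k+2$ monomials $t^0,\dots,t^{k+1}$ shift, and after at most $\sim 2(k+1)$ steps the index of the dominant monomial (among the first $k+1$, using $a_{j,k+1}\le M$ to control the last term once $t$ is small enough) must change or else stabilize; in either case some scale in that block of $\sim 2(k+1)$ consecutive candidates will make one of the first $k+1$ terms $A$-dominant. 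This is the ``Valiron-type'' step: along a geometric sequence of scales, a polynomial-like expression $\sum t^i a_{j,i}$ is dominated by a single term on a definite fraction of the scales.

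The key steps, in order, are: (1) fix the geometric progression of scales $t_\ell = \rho^\ell\, t_0$ with an explicit ratio $\rho$ depending only on $A$ and $k$; (2) for each individual $j$, prove that among any block of $2(k+1)$ consecutive scales $t_\ell,\dots,t_{\ell+2(k+1)-1}$ there is at least one at which some $i(j)\le k$ satisfies \eqref{eq:choosing norm Lem} — here the constraint $a_{j,0}+\dots+a_{j,k}=1$ guarantees at least one of these weights is $\ge 1/(k+1)$ at $t=1$, and the condition $t\le (M(k+1))^{-1}$ makes the $a_{j,k+1}$-term negligible, so one of the first $k+1$ monomials dominates once $t$ is small and a block-of-$2(k+1)$ analysis pins down the scale; (3) intersect over all $j=1,\dots,N$: each $j$ excludes at most a $1-1/(2(k+1))$ fraction cannot be said, so instead I would nest the choices — restrict to the sub-progression of scales that work for $j=1$ (at least one in every block of $2(k+1)$), then within that work for $j=2$, and so on, so that after handling all $N$ sequences one is left with a scale no smaller than $\rho^{\,2N(k+1)} t_0$ times the cutoff, which is exactly the bound \eqref{eq:Valiron bound on t}; (4) finally check that the surviving scale $t$ also satisfies $t\le t_0$ and, by the $(M(k+1))^{-1}$ cutoff built into the progression's starting point, that the $a_{j,k+1}$ term never spoils dominance.

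The main obstacle I expect is step (2): making precise the claim that ``in every block of $2(k+1)$ consecutive geometric scales, one of the first $k+1$ monomials becomes $A$-dominant,'' with the ratio $\rho$ and block length chosen so that the bookkeeping in step (3) produces the exponent $2N(k+1)$ and the factor $(2A+1)$ exactly as in \eqref{eq:Valiron bound on t}. Concretely one compares, at a given scale $t$, the largest term $t^{i_0}a_{j,i_0}$ to the next-largest; as $t$ decreases by the factor $\rho$ each step, the ratio between the term of index $i$ and index $i'$ changes by $\rho^{\,i-i'}$, so after enough steps the maximizing index moves strictly toward $0$ (since smaller $i$ means the monomial $t^i$ decreases more slowly as $t\to 0^+$ — wait, it is the opposite, smaller $i$ gives a relatively larger $t^i$ for $t<1$). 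One must therefore track the index of the dominant monomial as a monotone function of the scale and show it passes through a ``well-separated'' value; the clean way is to note that $\max_i t^i a_{j,i}$ as a function of $\log t$ is, up to the $a_{j,k+1}$-term, the max of $k+1$ affine functions, i.e. piecewise-linear and convex, so its breakpoints are few, and between breakpoints the max is achieved by a single index — one then only needs a scale sitting comfortably inside one of these linear pieces, and a counting of pieces gives the block length. Once this geometry of the piecewise-linear envelope is set up, the rest is the routine nested pigeonhole over $j$ and the explicit constant chase to match \eqref{eq:Valiron bound on t}.
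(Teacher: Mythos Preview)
Your final paragraph already contains the paper's argument: pass to logarithms, so that $\log_B(t^i a_{j,i})$ is affine in $\log_B t$ for each $i$, and work with the upper envelope (equivalently, the paper takes the concave majorant of $i\mapsto\log_B a_{j,i}$ and tilts it by $i\log_B t$, with $B=2A+1$). The condition ``some $i(j)\le k$ is $A$-dominant'' becomes ``the maximum of the tilted concave function occurs at a vertex $\le k$ with both adjacent edge slopes of absolute value $\ge 1$,'' and this fails only when $\log_B t$ lies in the length-$2$ neighborhood of one of the at most $k+1$ edge slopes. So for each fixed $j$ the bad set of values of $\log_B t$ has total length at most $2(k+1)$.

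The gap is your step~(3). The nesting you propose does not work: the property ``at least one good scale in every block of $2(k+1)$'' does \emph{not} persist when you pass to the sub-progression of good-for-$j=1$ scales and then ask about $j=2$. Two consecutive good-for-$j=1$ scales can be up to $2(k+1)$ apart in the original geometric progression, and nothing prevents the bad-for-$j=2$ set from covering both; iterating, there is no reason the process terminates after $N$ rounds of length $2(k+1)$. The paper avoids this entirely with a direct union bound: since the bad set for each $j$ has length $\le 2(k+1)$ in $\log_B t$, the union over $j=1,\dots,N$ has length $\le 2N(k+1)$, so any interval of that length below $\min\bigl(\log_B t_0,\,-\log_B(M(k+1))\bigr)$ (the second cutoff handles the $a_{j,k+1}$ term, as you noted) contains a $\log_B t$ good for all $j$ simultaneously. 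That gives~\eqref{eq:Valiron bound on t} immediately. Replace your nesting by this union bound and the rest of your outline goes through.
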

\begin{proof} Let $B=2A+1$. Consider the functions
$\tilde{\phi_j}(i)=\log_Ba_{j,i}$ and let $\phi_j(i)$ be their
smallest  concave majorates, $j=1,...,N$.
From the bounds on $a_{j,i}$ we conclude that 
\begin{equation*}
\min_{i}\frac{{\phi_j}(k+1)-\phi_j(i)}{k-i}\le \log_B\left(M(k+1)\right).
\end{equation*}

Note that replacing $a_i$ by $t^ia_i$ adds the linear function $i\log_Bt$ to 
$\phi_j$. It is enough to find $t$  
such that the graphs of the piece-wise linear concave functions
$\phi_j(i)+i\log_Bt$ have maximum point $i(j)$ not at $k+1$ and
have edges  with absolute values of edge slopes at 
least $1$. Indeed, this slope  
condition means that  $t^ia_i\le B^{-|i-i(j)|}t^{i(j)}a_{i(j)}$, where $j$ is 
the maximum 
point of $\phi+k\log_Bt$, and \eqref{eq:choosing norm Lem} follows by the sum 
of geometric progression formula.

In other words, we have to find some  $t$ such that, first, $\log_Bt$ lies 
outside of 
$1$-neighborhood of  $-\Delta_{ji}$, where $\Delta_{ji}=\phi_j(i+1)-\phi_j(i)$ 
are slopes of the 
edges of all graphs of $\phi_j(i)$, and, second, $\log_B t$  is smaller than 
both $\log_B t_0$ and $-\Delta_{j,k}$ for all 
$j=1,...,N$. But this $1$-neighborhood has length at most $2N(k+1)$, and one 
can see from \eqref{eq:Valiron condition} that 
\begin{equation}
-\Delta_{j,k}\ge -\log_B\left(M(k+1)\right).
\end{equation}
so one can find the required $t$ with 
$$
\log_B t\ge \min\left(\log_B t_0, -\log_B\left(M(k+1)\right)\right)-2N(k+1)
$$
and  \eqref{eq:Valiron bound on t} follows.
\end{proof}

\section{Multiplicity operators, growth and zeros}
\label{sec:MOandZeros}

In this section we collect several theorems showing how the multiplicity
operators can be used to control the number of zeros and the asymptotic
growth of a map $F$ in small polydiscs or balls.

\subsection{Number of zeros in a small polydisc}

The following is a simple consequence of Lemma~\ref{lem:weierstrass}.

\begin{Thm} \label{thm:polydisc-zeros} Let $F\in\cO^n(D^n)$ with
  $\norm{F}\le1$. Assume $F$ has $k+1$ zeros (counted with
  multiplicities) in the polydisc $D_r^n$. Then for every $k$-th
  multiplicity operator $\mo$,
  \begin{equation}
    \CZ_{n,k} \abs{\mo_0(F)} \le r.
  \end{equation}
  where $\CZ_{n,k}>0$ is a universal constant.
\end{Thm}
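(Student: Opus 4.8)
The strategy is to contrapose: assuming $\mo_0(F)$ is large relative to $r$, I will show $F$ cannot have $k+1$ zeros in $D_r^n$. First I reduce to a basic multiplicity operator $\mo = \bmo_B^\delta$, since any multiplicity operator lies in their convex hull and so $\abs{\mo_0(F)} \le \max_\delta \abs{\bmo_B^\delta(F)(0)}$; it therefore suffices to prove the bound with $\abs{\bmo_B^\delta(F)(0)} = s$ in place of $\abs{\mo_0(F)}$, picking the $B$ and $\delta$ realizing the maximum. With $s > 0$ fixed, Lemma~\ref{lem:weierstrass} supplies a radius $t$ with $\epsilon'_{n,k} s \le t \le \epsilon_{n,k} s$ and a division with remainder: every $P \in R_{n,t}$ can be written $P = \sum u_i f_i + \sum_{\bfi \in B} c_\bfi x^\bfi$ with the $\|\cdot\|_t$-norms controlled by $C_{n,k} s^{-k-1}\|P\|_t$. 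The key point is that this exhibits the quotient $R_{n,t}/I(F)$ (or rather the relevant completed/normed version) as spanned by the $k$ standard monomials $x^B$, hence of dimension at most $\abs{B} = k$.

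The main step is to convert this ``effective spanning'' statement into a bound on the number of zeros of $F$ inside the polydisc of radius $t$. The idea: if $F$ had $k+1$ zeros $p_0, \dots, p_k$ (with multiplicity) in $D_t^n$, then evaluation of functions in $I(F)$ at these points (with appropriate jet data at multiple points) would vanish, so the $(k+1)$-dimensional space of evaluation functionals would have to factor through $R_{n,t}/I(F)$, which has dimension $\le k$ — a contradiction, provided the division lemma is strong enough to see all $k+1$ conditions. Concretely, I would take the $k+1$ monomials (or polynomials) $p_0, \dots, p_k$ of a chain as in the proof of Lemma~\ref{lem:lem1ofDecomp}, apply the division to each, and find (as in Lemma~\ref{lem:local-resultant}) a nontrivial combination lying in $I(F) + \mathfrak m^{k+1}$ — but more care is needed since we want an honest analytic identity on $D_t^n$, not just a jet identity; this is exactly what the $\|\cdot\|_t$-convergent decomposition of Lemma~\ref{lem:weierstrass} buys us, since finite $\|\cdot\|_t$-norm means the series converges on $D_t^n$. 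Then a nonzero polynomial of degree $\le k$ that is divisible by $F$ in $\cO(D_t^n)$ would force... actually the cleanest route: the division shows $\dim_\C \cO(D_t^n)/(f_1,\dots,f_n)\cO(D_t^n) \le k$ (the $x^B$ span it, with convergence guaranteed by the norm bounds), and this dimension bounds from above the total multiplicity of common zeros of $F$ in $D_t^n$. Hence $F$ has at most $k$ zeros in $D_t^n$, so if it has $k+1$ zeros in $D_r^n$ then $r > t \ge \epsilon'_{n,k} s \ge \epsilon'_{n,k} \abs{\mo_0(F)}$, giving the theorem with $\CZ_{n,k} = \epsilon'_{n,k}$.

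\textbf{Main obstacle.} The delicate point is the passage from the algebraic/normed division statement to the geometric count of zeros with multiplicity in the polydisc $D_t^n$. One must be sure that finiteness of $\|\cdot\|_t$-norms genuinely yields convergent analytic representatives on the closed (or a slightly smaller) polydisc, that the quotient $\cO(D_t^n)/I(F)$ being spanned by $x^B$ is legitimate (this uses that the remainder operator $E$ has norm $< 1$, so the Neumann series converges — already established in Lemma~\ref{lem:weierstrass}), and that $\dim_\C$ of this quotient indeed dominates $\sum_{p \in Z(F) \cap D_t^n} \mult_p F$. The last inequality is standard (it is essentially the statement that the local algebras at the zeros are quotients of the global one), but one should handle zeros on the boundary of $D_t^n$ carefully, perhaps by shrinking $t$ by an arbitrarily small factor, which does not affect the final universal constant. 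I expect everything else to be bookkeeping with the constants $\epsilon_{n,k}, \epsilon'_{n,k}, C_{n,k}$ already produced in Lemma~\ref{lem:weierstrass}.
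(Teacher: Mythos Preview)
Your proposal is correct and follows essentially the same route as the paper: reduce to a basic multiplicity operator, set $\CZ_{n,k}=\epsilon'_{n,k}$, invoke Lemma~\ref{lem:weierstrass} to see that $x^B$ spans the quotient by $I(F)$ on a polydisc of radius $t\ge\epsilon'_{n,k}s>r$, and conclude that $F$ cannot have $k+1$ zeros there. The only cosmetic difference is that the paper disposes of your ``main obstacle'' by first perturbing $F$ to have $k+1$ \emph{distinct} zeros in $D^n_r$ and then observing that evaluation at these points already makes the image of $\cO(D^n)$ in $\cO(D^n_r)/\langle F\rangle$ at least $(k+1)$-dimensional, rather than invoking the local-algebra inequality $\dim_\C\cO/I(F)\ge\sum_p\mult_p F$ directly.
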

\begin{proof}
  It is clearly sufficient to prove the statement for every basic
  multiplicity operator $\bmo_B^\alpha$. Let 
  $\CZ_{n,k}=\epsilon'_{n,k}$.

  Suppose that $\CZ_{n,k}s>r$. Making an
  arbitrarily small perturbation, we may also assume that $F$ admits
  $k+1$ \emph{distinct} zeros in $D^n_r$.

  Applying~Lemma~\ref{lem:weierstrass} we obtain $t>r$ such that
  the image of the restriction map $\cO(D^n)\to \cO(D^n_r)/\<F\>$ is at most
  $k$-dimensional (and generated by $x^B$). However, since $F$ admits
  $k+1$ distinct zeros in $D^n_r$, the restrictions
  of the polynomial functions already span a $k+1$ dimensional
  subspace of $\cO(D^n_r)/\<F\>$, leading to a contradiction.
\end{proof}

\subsection{Growth in small balls}

We begin this section with a simple result giving a lower bound for
univariate polynomials of unit norm. The following simple
lemma appears as \cite[Lemma 7]{polyfuchs}.

\begin{Lem} \label{lem:poly-discs-bound} Let $P=\sum_{i=0}^k c_iz^i$
  be a polynomial of degree $d$ and $\|P\|_1=\sum|c_i|=1$.  Let $Z=\{P=0\}$
  denote the zero-locus of $P$. Then for any $z\in D$,
  \begin{equation}
    \abs{P(z)} \ge C_{d} \dist(z,Z)^d
  \end{equation}
  where $C_d$ is some universal constant.
\end{Lem}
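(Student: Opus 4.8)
The plan is to reduce the claim to a statement about the distance from $z$ to the nearest root and then estimate $|P(z)|$ directly via the factorization of $P$. First I would write $P(z) = c_d \prod_{j=1}^d (z - z_j)$ where $z_1,\dots,z_d$ are the roots (with multiplicity) and $z_m$ is a root realizing $\dist(z,Z) = |z - z_m|$. Then
\[
  |P(z)| = |c_d| \prod_{j=1}^d |z - z_j| \ge |c_d|\, |z - z_m|^d \cdot \prod_{j \ne m} \frac{|z - z_j|}{|z - z_m|}.
\]
The trouble is that the product over $j \ne m$ could be small if some other root is much closer to $z$ than $z_m$ — but that cannot happen, since $z_m$ is the \emph{closest} root, so $|z - z_j| \ge |z - z_m|$ for all $j$, and the product over $j\ne m$ is therefore at least $1$. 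Hence $|P(z)| \ge |c_d|\, \dist(z,Z)^d$, and it remains to bound $|c_d|$ from below by a universal constant depending only on $d$.

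The key remaining point is thus: if $P = \sum_{i=0}^d c_i z^i$ has $\|P\|_1 = 1$, then $|c_d|$ need \emph{not} be bounded below — so this naive approach needs a fix. Instead I would argue on the unit disc directly. The normalization $\sum |c_i| = 1$ together with $|z| \le 1$ does not immediately give a lower bound at a fixed point, so I would use a compactness argument: the set $K$ of polynomials of degree $\le d$ with $\|P\|_1 = 1$ is compact, and the function
\[
  P \longmapsto \inf_{z \in D} \frac{|P(z)|}{\dist(z, Z_P)^d}
\]
is (after checking it is well-defined and bounded away from $0$ near the ``boundary'' cases) lower semicontinuous and strictly positive on $K$, hence attains a positive minimum $C_d$. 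The main obstacle is making the compactness/continuity argument clean at the degenerate configurations: when the leading coefficient degenerates the degree drops, and when roots collide the ratio $|P(z)|/\dist(z,Z)^d$ could a priori blow down.

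To sidestep the degenerate-degree issue I would instead not fix the degree but rather prove the bound with $d$ replaced by the actual degree and then observe monotonicity in $d$ on $D$ (since $\dist(z,Z) \le 2$ on the closed unit disc, a bound with exponent $\deg P \le d$ implies one with exponent $d$ up to a factor $2^{d - \deg P} \le 2^d$). For the root-collision issue, the factorization argument above already shows $|P(z)| \ge |c_{\deg P}| \dist(z,Z)^{\deg P}$ with no loss from collisions, so the only thing to control is $|c_{\deg P}|$ in terms of $\|P\|_1 = 1$; since $P/c_{\deg P}$ is monic of degree $\deg P \le d$ with all roots in... well, roots need not be in $D$, but one can bound $|c_{\deg P}|$ below by a constant depending on $d$ using that some coefficient has absolute value $\ge 1/(d+1)$ and relating coefficients to the leading one is not valid in general — so ultimately I would fall back on the compactness argument restricted to each fixed degree $e \le d$, where $K_e = \{\|P\|_1 = 1, \deg P = e\}$ has compact closure and on the closure the limiting polynomials of lower degree are handled inductively. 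Assembling these pieces gives the universal constant $C_d$, and this is exactly the cited \cite[Lemma 7]{polyfuchs}, so I would also be content to simply invoke that reference.
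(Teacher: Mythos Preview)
The paper does not prove this lemma: it merely records that it ``appears as \cite[Lemma 7]{polyfuchs}''. Your closing remark that you would be content to invoke that reference is therefore exactly what the paper does, so on that level your proposal matches.

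Your attempted direct arguments, however, contain a real gap, and it is worth seeing why. In the monotonicity step you assert that $\dist(z,Z)\le 2$ on the closed unit disc, but nothing in the hypotheses forces the roots of $P$ to lie anywhere near $D$: for $P(z)=(1-\e)+\e z$ with $\e>0$ small the unique root sits at $-(1-\e)/\e$, arbitrarily far away. This same example shows that the inequality \emph{as literally written} cannot hold with a universal constant: at $z=0$ one has $|P(0)|=1-\e$ while $\dist(0,Z)=(1-\e)/\e$, so any valid $C_1$ would have to satisfy $C_1\le\e$. Hence your compactness argument \emph{must} break down at the boundary where the leading coefficient degenerates---and it does, precisely because that degeneration sends roots to infinity and drives the ratio $|P(z)|/\dist(z,Z)^d$ to zero. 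What the proof of Theorem~\ref{thm:sphere-growth} actually uses is only the case where the comparison radius is at most~$1$ (equivalently, replace $\dist(z,Z)$ by $\min(\dist(z,Z),1)$ on the right-hand side); for that weaker statement a compactness argument over $\{\|P\|_1=1,\ \deg P\le d\}$ can be made to work, and presumably this is the form the lemma takes in the cited source.
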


The following result is a local multidimensional analog of this lemma,
stated in terms of multiplicity operators.

\begin{Thm} \label{thm:sphere-growth} Let $F\in\cO^n(D^n)$ with
  $\norm{F}\le1$. Let $\mo$ be a multiplicity operator of order $k$.
  Assume $s=\abs{\mo_0(F)}\neq0$. There exist positive universal
  constants $A_{n,k},B_{n,k}$ with the following property:

  For every $r<s$ there exists $A_{n,k} r < \tilde r < r$ such that
  \begin{equation}
    \norm{F(z)} \ge B_{n,k} s \tilde r^k \mbox{ for every } \norm{z}=\tilde r
  \end{equation}
\end{Thm}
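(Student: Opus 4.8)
The plan is to reduce the multidimensional statement to the one-variable Lemma~\ref{lem:poly-discs-bound} via the effective division provided by Lemma~\ref{lem:weierstrass}. First I would apply Lemma~\ref{lem:weierstrass} to the basic multiplicity operator realizing (up to the convexity reduction) the value $s=\abs{\mo_0(F)}$, obtaining a scale $t$ with $\epsilon'_{n,k}s\le t\le\epsilon_{n,k}s$ such that every $P\in R_{n,t}$ decomposes as $P=\sum u_if_i+\sum_{\bfi\in B}c_\bfi x^\bfi$ with $\|\cdot\|_t$-norms controlled by $C_{n,k}s^{-k-1}\|P\|_t$. The key consequence is that on the polydisc $D^n_t$ the ideal $\langle F\rangle$ swallows everything except the $k$-dimensional standard piece $x^B$, so modulo $F$ the ring looks like a $k$-dimensional space of low-degree polynomials.

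The heart of the argument is then a one-variable reduction. Fix $r<s$; since $r<s\le t/\epsilon'_{n,k}$ we may assume $r$ is comparable to $t$ (shrinking constants), so spheres of radius $\tilde r\asymp r$ sit inside $D^n_t$. Suppose for contradiction that on \emph{every} sphere $\norm{z}=\tilde r$ with $A_{n,k}r<\tilde r<r$ the map $F$ is small, i.e. $\norm{F(z)}<B_{n,k}s\tilde r^k$. I would pick a complex line $\ell$ through $0$ (generic, or simply a coordinate axis after a bounded-distortion coordinate change) and study the restriction of a well-chosen function to $\ell$. Concretely: the $k$ standard monomials $x^B$, restricted to $\ell$, span a space of univariate polynomials of degree $<k+1$; choosing a suitable unit-norm combination $Q$ of them whose restriction to $\ell$ is a genuine degree-$(\le k)$ polynomial of unit norm, Lemma~\ref{lem:poly-discs-bound} forces $\abs{Q(z)}\gtrsim\dist(z,Z_Q)^k$ somewhere on the circle $\ell\cap\{\norm z=\tilde r\}$, hence $\abs{Q(z)}\gtrsim \tilde r^k$ on a definite fraction of that circle, for a suitable $\tilde r$ in the allowed range. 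On the other hand, writing $Q=\sum u_if_i+\sum_{\bfi\in B}c_\bfi x^\bfi$ is circular, so instead I would run it the other way: take $P$ to be (a monomial of degree exactly $k$, say) outside $x^B$, decompose $P=\sum u_if_i+\sum_{\bfi\in B}c_\bfi x^\bfi$, and note that on a sphere where $\norm{F}$ is tiny, $P$ is forced to nearly equal the bounded standard-piece polynomial; since this holds simultaneously on all spheres $\tilde r\in(A_{n,k}r,r)$, one gets that $P-\sum c_\bfi x^\bfi$ is uniformly small on a whole annulus, while by Lemma~\ref{lem:poly-discs-bound} applied on generic lines a degree-$k$ polynomial minus a degree-$<k$ polynomial cannot be uniformly small on an annulus of definite width unless its leading coefficient is small — contradicting $\deg P=k$ with unit leading coefficient.

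More precisely, the step I expect to carry the weight is: \emph{controlling the standard-piece polynomial along the annulus}. Let $G(x):=\sum_{\bfi\in B}c_\bfi x^\bfi$, a polynomial of degree $<k$ with $\|G\|_t\lesssim s^{-k}$ after clearing the $s^{-k-1}\|P\|_t$ bound against $\|P\|_t\asymp t^k\asymp s^k$. On each sphere $\norm z=\tilde r$ with $\tilde r\in(A_{n,k}r,r)$ we have $\norm{(P-G)(z)}=\norm{\sum u_i f_i(z)}\le n\max_i\|u_i\|_t\cdot\norm{F(z)}\lesssim s^{-k-1}\cdot s^k\cdot B_{n,k}s\tilde r^k=B_{n,k}\tilde r^k$. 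Thus $P-G$ is $O(B_{n,k}\tilde r^k)$ on the union of these spheres. Restricting to a generic complex line $\ell$ through $0$: $P\rest\ell$ is a univariate polynomial with a monomial of degree exactly $k$ and unit-size leading coefficient (generic line), $G\rest\ell$ has degree $<k$, so $(P-G)\rest\ell=:H$ is a degree-$k$ univariate polynomial whose leading coefficient is $\asymp 1$. After rescaling $z\mapsto \tilde r z$ this becomes a polynomial of degree exactly $k$ with leading coefficient $\asymp\tilde r^k$ and with $\|H\|$ dominated by that leading term (since the lower-order part comes from $G$, whose coefficients are $O(s^{-k}\cdot r^{<k})=O(\tilde r^k)\cdot O((r/t)^{<k})$ — bounded); normalizing to unit norm and invoking Lemma~\ref{lem:poly-discs-bound} gives a point on the rescaled unit circle where $\abs H\gtrsim_k 1$, hence a point on $\norm z=\tilde r$ where $\abs{H(z)}\gtrsim_k \tilde r^k$. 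Choosing $B_{n,k}$ small enough relative to the constant $C_k$ of Lemma~\ref{lem:poly-discs-bound} and to the bounded-distortion and averaging losses (and $A_{n,k}$ so that the allowed annulus is wide enough that some $\tilde r$ avoids the zeros of $H$ on lines — a Valiron-type pigeonhole over the annulus width, exactly in the spirit of Lemma~\ref{lem:Valiron}), this contradicts $\norm{(P-G)(z)}\lesssim B_{n,k}\tilde r^k$ on all these spheres. Hence for at least one $\tilde r\in(A_{n,k}r,r)$ we must have $\norm{F(z)}\ge B_{n,k}s\tilde r^k$ for all $\norm z=\tilde r$, as claimed; the convex-hull case follows since $\abs{\mo_0(F)}$ is a convex combination of the $\abs{\bmo_B^\alpha(F)(0)}$, so some basic operator has value $\ge s$.

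The main obstacle, as flagged, is the annulus/generic-line bookkeeping: one needs to choose the line $\ell$ so that $\deg(P\rest\ell)=k$ with a quantitatively non-degenerate leading coefficient, and simultaneously choose $\tilde r$ in the narrow range $(A_{n,k}r,r)$ so that the circle $\ell\cap\{\norm z=\tilde r\}$ is not concentrated near the zeros of $(P-G)\rest\ell$. Both are pigeonhole arguments with explicit constants — the line choice is finite-dimensional and generic, and the radius choice is precisely the kind of ``move the sphere within a constant multiplicative range'' device handled by Lemma~\ref{lem:Valiron} — but keeping all the $\|\cdot\|_t$-versus-$\|\cdot\|_1$ conversions and the $s\asymp t\asymp r$ comparisons consistent is where the care is needed.
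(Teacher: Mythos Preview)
Your contradiction setup is the wrong negation. The theorem asserts the existence of a radius $\tilde r$ on which $\norm{F(z)}\ge B_{n,k}s\tilde r^k$ for \emph{every} $z$ with $\norm z=\tilde r$; its negation says that for every $\tilde r$ in the range there is \emph{some} bad point $z_{\tilde r}$ with $\norm{F(z_{\tilde r})}<B_{n,k}s\tilde r^k$. You instead assume $F$ is small at every point of every such sphere, and your key step ``$P-G$ is $O(B_{n,k}\tilde r^k)$ on the union of these spheres'' rests on exactly this. Under the correct negation you would only control $P-G$ at one point per radius, and restricting to a fixed line $\ell$ need not meet any of those points, so the one-variable lower bound on $H=(P-G)\rest\ell$ produces no contradiction. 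Even run forward rather than by contradiction, a single polynomial $P-G$ restricted to a single line cannot force $\norm F$ large on an \emph{entire} sphere: the lower bound it gives covers only that one line.

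The paper's argument is direct and uses the lighter Lemma~\ref{lem:local-resultant} rather than Lemma~\ref{lem:weierstrass}. For each of the $n$ standard coordinates $\ell$ one applies Lemma~\ref{lem:local-resultant} to $1,\ell,\dots,\ell^k$ to obtain a unit-norm univariate polynomial $P_\ell$ with $P_\ell(\ell)=\sum_i U_i f_i+E$, where $\max(\norm{U_i},\norm{E})\le \CD_{n,k}s^{-1}$ and $E\in\frak m^{k+1}$. On $B^n_\rho$ with $\rho\asymp r$ the Schwarz lemma gives $|E(z)|\lesssim s^{-1}\rho^{k+1}$, while Lemma~\ref{lem:poly-discs-bound} gives $|P_\ell(\ell(z))|\gtrsim\tilde\rho^k$ whenever $\ell(z)$ lies outside $k$ discs of radius $\tilde\rho\asymp r$. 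With constants chosen so that the $P_\ell$ term dominates $E$, one obtains $\norm{F(z)}\gtrsim s\rho^k$ whenever $\ell(z)$ avoids those discs. Intersecting over all $n$ coordinates, the bad set is contained in at most $k^n$ polydiscs of radius $\tilde\rho$; taking $\tilde\rho$ small enough relative to $\rho$ makes their total diameter below $\rho/2$, so some sphere of radius $\tilde r\in(\rho/2,\rho)$ misses them all. The essential difference from your approach is that one polynomial is produced \emph{per coordinate}, so the bad set is pinned down in every direction simultaneously --- this is precisely what lets the conclusion hold on a whole sphere rather than along a single line.
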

\begin{proof}
  Let $\ell$ denote one of the standard coordinates on $\C^n$.
  According to Lemma~\ref{lem:local-resultant} there exists a
  polynomial $P$, with  $\|P\|_1=1$, such that
  \begin{equation}
    P(\ell) = \sum_i U_i f_i + E
  \end{equation}
  where
  \begin{equation}
    \max(\norm{U_i},\norm{E})<\CD_{n,k} s^{-1} \mbox{ and }
    E=O(\abs{z}^{k+1}).
  \end{equation}
  Let $\rho = C_{n,k}^1r$ and $\tilde\rho=C_{n,k}^2r$ where the
  universal constants $C_{n,k}^2<C_{n,k}^1<1$ will be chosen later.
  
  From the Schwartz lemma it follows that for $\norm{z}<\rho$,
  \begin{equation}
    \begin{aligned}
      \abs{E(z)} &\le C^D_{n,k} s^{-1}
      \abs{z}^{k+1} \le \CD_{n,k}
      s^{-1}(C_{n,k}^1)^{k+1}r^{k+1}  \\
      &\le \CD_{n,k} (C_{n,k}^1)^{k+1}r^k
    \end{aligned}
  \end{equation}
  Let $U_{\tilde\rho}\subset\C$ denote the union of discs of radius
  $\tilde\rho$ around the zeros of $P$. By
  Lemma~\ref{lem:poly-discs-bound} we have
  \begin{equation}
    \abs{P(\ell(z))} \ge C_k \tilde\rho^k = C_k (C_{n,k}^2)^k r^k
    \mbox{ for } \ell(z)\not\in U_{\tilde\rho}.
  \end{equation}

  Now choose constants $C_{n,k}^2<C_{n,k}^1<1$ such that
  \begin{equation} \label{eq:rhotilde-choice}
    \begin{aligned}
      &\CD_{n,k} (C_{n,k}^1)^{k+1} < (1/2) C_k (C_{n,k}^2)^k \\
      &C_{n,k}^2 < (1/2) (4n)^{-1/2} k^{-n} C_{n,k}^1.
    \end{aligned}\end{equation}
  Then for any $z\in D^n_\rho$, if $\ell(z)\not\in U_{\tilde\rho}$ we
  have
  \begin{equation}
    \abs{\sum U_i(z) f_i(z)} = \abs{P(\ell(z))-E(z)} \ge (1/2) C_k \tilde\rho^k
  \end{equation}
  and since $\norm{U_i}\le\CD_{n,k}s^{-1}$ it follows that
  \begin{equation}
    \norm{F(z)} \ge (1/2) C_k \CD_{n,k} s \tilde\rho^k.
  \end{equation}

  To summarize, in $B^n_\rho$ we have
  $\norm{F(z)}\ge C_{n,k}^3 s \rho^k$ unless
  $\ell(z)\in U_{\tilde\rho}$ for some universal constant $C_{n,k}^3$.
  Since this is true for each coordinate $\ell$, we have in total a
  union $U$ of $k^n$ polydiscs of radius $\tilde\rho$ such that in
  $B^N_\rho\setminus U$ the inequality
  $\norm{F(z)}\ge C_{n,k}^3 s \rho^k$ holds.
  By~\eqref{eq:rhotilde-choice} $U$ has total diameter smaller than
  $\rho/2$, and hence there exists a sphere of radius
  $\rho/2 < \tilde r < \rho$ which is disjoint from $U$. This
  concludes the proof.
\end{proof}

In the proof of Lemma~\ref{lem:jet-mult} we used the fact that if
$\mult F \le k$ then ${\frak m}^{k+1}\subset I(F)$. In particular, this implies that if $G$
is a map whose components lie in ${\frak m}^{k+1}$ then $\mult F = \mult (F+G)$.
We can now give a quantitative version of this fact.

\begin{Cor} \label{cor:perts}
  Let $F,G\in\cO^n(D^n)$ with $\norm{F},\norm{G}\le1$.
  Let $\mo$ be a multiplicity operator of order $k$. Denote
  $s:=\abs{\mo_0(F)}\neq0$. Let $0<\e<1$. There exist positive
  universal constants $A'_{n,k},B'_{n,k}$ with the following property:

  If
  \begin{equation} \label{eq:k-jet-cond}
    \abs{\frac{1}{\alpha!} \pd{^\alpha G_i}{z^\alpha}(0)} \le \e^{k+1-\abs{\alpha}} \qquad \text{for all } i=1,\ldots,n,\ \abs{\alpha}\le k
  \end{equation}
  then for every $\e<r<B'_{n,k}s$ there exists $A'_{n,k} r < \tilde r < r$ such that
  \begin{equation}
    \norm{F(z)} > \norm{G(z)} \text{ for every } \norm{z}=\tilde r,
  \end{equation}
  and in particular
  \begin{equation}
    \#\{z:F(z)=0, \norm{z}<\tilde r\} = \#\{z:F(z)+G(z)=0, \norm{z}<\tilde r\}.
  \end{equation}
\end{Cor}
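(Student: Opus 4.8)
The plan is to derive the corollary as a direct application of Theorem~\ref{thm:sphere-growth} together with a careful bookkeeping of the effect of $G$. First I would apply Theorem~\ref{thm:sphere-growth} to $F$ with the given multiplicity operator $\mo$: since $s=\abs{\mo_0(F)}\neq 0$, for every $r<s$ there exists $A_{n,k}r<\tilde r<r$ with $\norm{F(z)}\ge B_{n,k}s\tilde r^k$ for all $\norm z=\tilde r$. The goal is then to show that under the hypothesis~\eqref{eq:k-jet-cond} and for $r$ in the prescribed range $\e<r<B'_{n,k}s$, the perturbation $G$ satisfies $\norm{G(z)}<B_{n,k}s\tilde r^k$ on the same sphere, so that $\norm{F(z)}>\norm{G(z)}$ there; the count of zeros then follows immediately from Rouch\'e's theorem applied on the ball $B^n_{\tilde r}$.

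The key estimate is the bound on $\norm{G}$ on a sphere of radius $\tilde r$. I would split $G$ into its $k$-jet part and its higher-order tail: write $G_i = G_i^{\le k} + G_i^{>k}$ where $G_i^{\le k}$ is the Taylor polynomial of degree $\le k$ and $G_i^{>k}\in\frak m^{k+1}$. For the jet part, the hypothesis~\eqref{eq:k-jet-cond} gives $\abs{\frac{1}{\alpha!}\partial^\alpha G_i(0)}\le \e^{k+1-\abs\alpha}$, so for $\norm z = \tilde r$ one gets $\abs{G_i^{\le k}(z)} \le \sum_{\abs\alpha\le k} \e^{k+1-\abs\alpha}\tilde r^{\abs\alpha}\cdot(\text{multiplicity of }\alpha)$, and since $\e < \tilde r$ (because $\tilde r > A_{n,k}r$ and $r>\e$ — here I'd need $A_{n,k}r>\e$, which forces the lower bound $r > \e/A_{n,k}$; I would absorb this by taking the effective range $\e < r$ together with the constant, or sharpen to $r$ slightly larger than $\e$) each term is bounded by $\tilde r^{k+1}$ up to a combinatorial constant $c_{n,k}$, giving $\abs{G_i^{\le k}(z)} \le c_{n,k}\tilde r^{k+1}\le c_{n,k}\,r\,\tilde r^k$. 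For the tail, since $\norm{G}\le 1$ we have $\norm{G^{>k}}\le C$ (a universal constant, e.g. from Cauchy estimates and summing the coefficient bounds as in~\eqref{eq:norm equivalence}), and being in $\frak m^{k+1}$ the Schwarz lemma yields $\abs{G_i^{>k}(z)} \le C'_{n,k}\norm z^{k+1} = C'_{n,k}\tilde r^{k+1} \le C'_{n,k}\,r\,\tilde r^k$ for $\tilde r\le r < 1$. Altogether $\norm{G(z)} \le C''_{n,k}\, r\,\tilde r^k$ on $\norm z=\tilde r$.

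Combining the two bounds, on the sphere $\norm z=\tilde r$ we have
\begin{equation*}
  \norm{F(z)} \ge B_{n,k}s\tilde r^k \quad\text{and}\quad \norm{G(z)} \le C''_{n,k}\,r\,\tilde r^k,
\end{equation*}
so $\norm{F(z)}>\norm{G(z)}$ as soon as $C''_{n,k}\,r < B_{n,k}s$, i.e. as soon as $r < B'_{n,k}s$ with $B'_{n,k} := B_{n,k}/C''_{n,k}$; and we set $A'_{n,k} := A_{n,k}$. The final sentence then follows from Rouch\'e's theorem on $B^n_{\tilde r}$ (with the roles of $F$ and $F+G$ the usual ones), noting that $F$ and $F+G$ have the same zero count inside $\norm z<\tilde r$ precisely because $\norm{F}>\norm{G}$ on the bounding sphere.

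The main obstacle I anticipate is not conceptual but a matching of ranges of validity: Theorem~\ref{thm:sphere-growth} produces $\tilde r$ only in the window $(A_{n,k}r, r)$, while to control the $k$-jet part of $G$ I need $\e$ small compared to $\tilde r$, hence compared to $A_{n,k}r$. So the honest constraint is $\e < A_{n,k}r$, slightly stronger than the stated $\e<r$; I would either note that one may shrink $A'_{n,k}$ and rephrase so the stated range $\e<r<B'_{n,k}s$ is interpreted with the understanding that the theorem is applied to $r/A_{n,k}$ in place of $r$ (keeping $\tilde r < r$), or simply carry the extra constant factor through. A secondary bookkeeping point is making sure all the ``universal constant'' manipulations — the combinatorial factor from counting multi-indices, the Schwarz-lemma constant, the Cauchy bound $\norm{G^{>k}}\le C$ — are genuinely independent of $F$, $G$, $\e$, $r$; this is routine given $\norm F,\norm G\le 1$, exactly as in the proof of Proposition~\ref{prop:decomposition}.
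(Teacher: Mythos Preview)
Your proposal is correct and follows essentially the same route as the paper: apply Theorem~\ref{thm:sphere-growth} to $F$, split $G$ into its $k$-jet and tail, bound each part on the sphere $\norm z=\tilde r$ by a universal constant times $\tilde r^{k+1}$, and conclude by Rouch\'e. The ``obstacle'' you flag is not a genuine issue: from $\e<r$ and $\tilde r>A_{n,k}r$ you only get $\e<\tilde r/A_{n,k}$, but that is enough, since then $\e^{k+1-j}\tilde r^{j}\le A_{n,k}^{-(k+1-j)}\tilde r^{k+1}$ and the extra factor $A_{n,k}^{-(k+1)}$ is absorbed into the universal constant $C''_{n,k}$; no rescaling of $r$ or adjustment of $A'_{n,k}$ is needed.
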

\begin{proof}
  Indeed, by Theorem~\ref{thm:sphere-growth} there exists $A_{n,k} r < \tilde r < r$
  such that
  \begin{equation}
    \norm{F(z)} \ge B_{n,k} s \tilde r^k \text{ for every } \norm{z}=\tilde r.
  \end{equation}
  On the other hand we have
  \begin{equation}
    G(z) = \sum_{\abs{\alpha}\le k} \frac{1}{\alpha!} \pd{^\alpha G_i}{z^\alpha}(0) z^\alpha + G_{k+1}(z)
  \end{equation}
  where $\norm{G_{k+1}}<C^1_{n,k}$. From~\eqref{eq:k-jet-cond} and the Schwartz lemma applied
  to $G_{k+1}$ we conclude that
  \begin{equation}
    \norm{G(z)} \le C^2_{n,k} \tilde r^{k+1} \text{ for every } \norm{z}=\tilde r.
  \end{equation}
  Choosing an appropriate $B'_{n,k}$ we can guarantee that
  $\norm{F(z)}>\norm{G(z)}$ for every $\norm{z}=\tilde r$, and the
  conclusion follows by an argument of Rouch\'e type (using
  topological degree theory).
\end{proof}

We also record a convenient special case of Corollary~\ref{cor:perts}.

\begin{Cor} \label{cor:power-pert} Let $F,G\in\cO^n(D^n)$ with
  $\norm{F},\norm{G}\le1$. Let $\mo$ be a multiplicity operator of
  order $k$. Denote $s:=\abs{\mo_0(F)}\neq0$. Let $0<\e<1$. There
  exist universal constants $A''_{n,k},B''_{n,k}$ with the following
  property:

  If $\norm{G(0)}<\e$ then for every $\e<r<B''_{n,k}s$ there exists
  $A''_{n,k} r < \tilde r < r$ such that
  \begin{equation}
    \norm{F(z)} > \norm{G^{k+1}(z)} \text{ for every } \norm{z}=\tilde r,
  \end{equation}
  and in particular
  \begin{equation}
    \#\{z:F(z)=0, \norm{z}<\tilde r\} = \#\{z:F(z)+G^{k+1}(z)=0, \norm{z}<\tilde r\}.
  \end{equation}
\end{Cor}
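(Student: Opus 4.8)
The plan is to deduce Corollary~\ref{cor:power-pert} directly from Corollary~\ref{cor:perts} by showing that the map $G^{k+1}$ (meaning the componentwise $(k{+}1)$-st power, $G^{k+1}=((G_1)^{k+1},\ldots,(G_n)^{k+1})$, or more precisely whatever power construction the paper intends so that each component lies in the cube of the ideal generated by the $G_i$) satisfies the $k$-jet smallness hypothesis~\eqref{eq:k-jet-cond} of Corollary~\ref{cor:perts} with $\e$ replaced by a constant multiple of $\e$. Once that is established, the conclusion is immediate: apply Corollary~\ref{cor:perts} to the pair $(F, \tilde G)$ with $\tilde G := G^{k+1}$, adjust the universal constants $A'_{n,k}, B'_{n,k}$ by the constant factor lost in the rescaling of $\e$, and rename them $A''_{n,k}, B''_{n,k}$.

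The heart of the matter is therefore the jet estimate. First I would write, using $\norm{G}\le 1$ and the Cauchy estimates, that all Taylor coefficients of $G$ (hence of each $G_i$) are bounded by a universal constant, say by $1$ on a slightly smaller polydisc, together with the hypothesis $\norm{G(0)}<\e$, i.e. $|G_i(0)|<\e$ for each $i$. Then I would expand $G_i = G_i(0) + H_i$ where $H_i\in{\frak m}$ has all coefficients bounded by a universal constant, and raise to the power $k+1$ by the binomial formula:
\begin{equation*}
  (G_i)^{k+1} = \sum_{j=0}^{k+1} \binom{k+1}{j} G_i(0)^{k+1-j} H_i^j.
\end{equation*}
A monomial of degree $|\alpha|\le k$ appearing in $(G_i)^{k+1}$ must come from a term with $j\le |\alpha|\le k$, hence carries a factor $G_i(0)^{k+1-j}$ with $k+1-j\ge k+1-|\alpha|$, so its absolute value is at most $\binom{k+1}{j}\cdot \e^{k+1-|\alpha|}\cdot(\text{universal bound on the coefficients of }H_i^j)$. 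Collecting the finitely many such contributions gives
\begin{equation*}
  \Bigl|\frac{1}{\alpha!}\pd{^\alpha (G_i)^{k+1}}{z^\alpha}(0)\Bigr| \le C^1_{n,k}\, \e^{k+1-|\alpha|}
\end{equation*}
for all $i$ and all $|\alpha|\le k$, with $C^1_{n,k}$ universal. This is exactly~\eqref{eq:k-jet-cond} for the map $\tilde G$ with $\e$ replaced by $C^1_{n,k}\e$ (one may also need a trivial check that $C^1_{n,k}\e < 1$ after shrinking, or simply absorb the factor into the range of $r$).

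With the jet bound in hand, I would invoke Corollary~\ref{cor:perts} applied to $F$ and $\tilde G = G^{k+1}$ with parameter $\e' = C^1_{n,k}\e$: it yields universal constants $A'_{n,k},B'_{n,k}$ such that for every $\e' < r < B'_{n,k}s$ there is $A'_{n,k} r < \tilde r < r$ with $\norm{F(z)}>\norm{\tilde G(z)}$ on $\norm{z}=\tilde r$, and hence the equality of zero counts inside $\norm{z}<\tilde r$ via the Rouché/degree argument already recorded there. Setting $A''_{n,k} := A'_{n,k}$ and $B''_{n,k} := B'_{n,k}/C^1_{n,k}$ (and, if necessary, further shrinking so that the hypothesis $\e<r$ of the statement implies $\e'<r$, which it does provided $C^1_{n,k}\ge 1$, or otherwise trivially), we obtain the asserted conclusion for all $\e<r<B''_{n,k}s$. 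The only mild obstacle is bookkeeping: keeping track of the binomial and coefficient constants so that the exponent $k+1-|\alpha|$ of $\e$ is genuinely preserved in every surviving monomial — but this is exactly where the $(k{+}1)$-st power is chosen, since raising to a lower power would not leave enough factors of $G_i(0)$ to match the required exponents.
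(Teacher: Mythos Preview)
Your jet estimate is correct and is the heart of the matter. However, there is a small bookkeeping slip in the reduction step, and the paper's route (``analogous to the proof of Corollary~\ref{cor:perts}'') is in fact a bit more direct.

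The slip: after obtaining
\[
\Bigl|\tfrac{1}{\alpha!}\partial^\alpha (G_i)^{k+1}(0)\Bigr|\le C^1_{n,k}\,\e^{\,k+1-|\alpha|},
\]
you set $\e'=C^1_{n,k}\e$ and then write that ``$\e<r$ implies $\e'<r$ provided $C^1_{n,k}\ge1$''. This is backwards: if $C^1_{n,k}\ge1$ then $\e'\ge\e$, so $\e<r$ does \emph{not} force $\e'<r$, and the range $\e<r\le C^1_{n,k}\e$ is not covered by a black-box application of Corollary~\ref{cor:perts}. The fix is easy: instead of invoking the \emph{statement} of Corollary~\ref{cor:perts}, rerun its short proof with your jet bound; the extra constant $C^1_{n,k}$ is absorbed into $B'_{n,k}$ at the very last comparison step, and the lower bound on $r$ stays $\e$.

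The paper's intended ``analogous'' argument is slightly simpler and avoids the Taylor computation altogether. Apply Theorem~\ref{thm:sphere-growth} to $F$ to get $A_{n,k}r<\tilde r<r$ with $\norm{F(z)}\ge B_{n,k}s\tilde r^{\,k}$ on $\norm{z}=\tilde r$. On that sphere estimate $G$ directly: since $\norm{G}\le1$ and $G_i-G_i(0)$ vanishes at the origin, the Schwartz lemma gives $|G_i(z)|\le |G_i(0)|+C_0\tilde r\le \e+C_0\tilde r$. Because $\e<r<\tilde r/A_{n,k}$, this yields $|G_i(z)|\le C_1\tilde r$, hence $\norm{G^{k+1}(z)}\le C_2\tilde r^{\,k+1}$. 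Choosing $B''_{n,k}$ so that $B_{n,k}s\tilde r^{\,k}>C_2\tilde r^{\,k+1}$ whenever $\tilde r<r<B''_{n,k}s$ finishes the proof. Your approach reaches the same inequality via the $k$-jet of $G^{k+1}$; both are valid, but the direct sphere estimate sidesteps the constant-matching issue.
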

The proof is analogous to the proof of Corollary~\ref{cor:perts}.

\section{Multiplicity operators and ideals}
\subsection{Growth along analytic curves}

Let $\gamma\subset(\C^n,0)$ be a germ of a real analytic curve.  Let
$G(t):(\R_{\ge0},0)\to\C^n$ be a length parametrization for $\gamma$,
\begin{equation}
  \gamma = \{G(t):t\in(\R,0)\} \qquad \norm{G(t)} = t.
\end{equation}
The components of $G(t)$ are convergent Puiseux series in
$t$.

We define the \emph{order} of an analytic function along $\gamma$ as
\begin{equation}
  \ord_\gamma : \cO(\C^n,0) \to \Q_{\ge0}
  \qquad \ord_\gamma(f)=\lim_{t\to0^+} \frac{\log \abs{f(G(t))}}{\log t}.
\end{equation}
In terms of Puiseux series, the order of $f$ along $\gamma$ is equal
to the leading exponent in the Puiseux expansion of the function
$f(G(t))$.

We will require the following standard lemma.
\begin{Lem}\label{lem:curve-mult}
  Let $k\in\N$. Suppose that $\ord_\gamma f_i \ge k$. Then $\mult F\ge
  k$.
\end{Lem}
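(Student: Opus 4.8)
The plan is to show that the hypothesis $\ord_\gamma f_i \ge k$ for all $i$ forces every holomorphic function in $I(F)$ to vanish to order at least $k$ along $\gamma$, and then to translate this back into a statement about $\mult F$ via the jet-algebra description of multiplicity from Lemma~\ref{lem:jet-mult}. Concretely, I would first observe that $\ord_\gamma$ is (sub)additive: $\ord_\gamma(gh)\ge\ord_\gamma g$ for $g\in\cO(\C^n,0)$ (since $\ord_\gamma h\ge 0$ as $h$ is holomorphic at the origin and $G(t)\to 0$), and $\ord_\gamma(g+h)\ge\min(\ord_\gamma g,\ord_\gamma h)$. Hence for any $g=\sum u_i f_i\in I(F)$ with $u_i\in\cO(\C^n,0)$ we get $\ord_\gamma g\ge\min_i\ord_\gamma f_i\ge k$. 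In particular no function with a nonzero constant term, and more generally no function of order $<k$ at $0$, can lie in $I(F)$: if $g\in I(F)$ had a Taylor expansion at $0$ whose lowest-degree homogeneous part had degree $d<k$, then along $G(t)$ (whose leading Puiseux exponent is $1$, by the length parametrization) we would have $\ord_\gamma g = d<k$, a contradiction --- unless that lowest-degree part happens to vanish identically on the tangent cone of $\gamma$, so a small amount of care is needed here.

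To avoid that subtlety I would instead argue at the level of the quotient algebra directly. The key point is: if $\mult F \le k$, then by (the proof of) Lemma~\ref{lem:jet-mult} we have ${\frak m}^{k+1}\subset I(F)$, hence $\dim_\C R_n/I(F)\le\dim_\C R_n/{\frak m}^{k+1}=\binom{n+k}{k}$ --- but more usefully, if $\mult F \le k-1$ then $\dim_\C J_{n,k-1}/j(I(F))\le k-1$ and ${\frak m}^{k}\subset I(F)$. So suppose for contradiction that $\mult F\le k-1$; then every element of ${\frak m}^k$ lies in $I(F)$, and moreover the residues of $1,x_1,\ldots,x_n,\ldots$ of degree $<k$ already span a space of dimension $\le k-1$ in $R_n/I(F)$, so there is a nonzero polynomial $P$ of degree $<k$ with $P\in I(F)$ after subtracting a suitable ${\frak m}^k$ element --- i.e. $P = \sum u_i f_i$ with the $u_i\in\cO(\C^n,0)$. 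Applying $\ord_\gamma$ and using subadditivity gives $\ord_\gamma P\ge k$. But $P$ is a nonzero polynomial of degree $<k$ vanishing at $0$ (its constant term, if any, would give $\ord_\gamma P = 0$), so $P(G(t))$ is a nonzero Puiseux series whose leading exponent is at most $\deg P<k$ --- here one uses that $G$ is non-constant, indeed $\norm{G(t)}=t$, so $G(t)$ is not identically $0$ and substitution into a nonzero polynomial cannot produce the zero series. This contradiction gives $\mult F\ge k$.

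The main obstacle --- and the only genuinely non-formal point --- is guaranteeing that substituting the non-constant Puiseux map $G(t)$ into a nonzero polynomial $P$ of controlled degree yields a nonzero series of order $\le \deg P$. That $P(G(t))\not\equiv 0$ is clear because $G$ is a non-constant analytic map germ (its image is an honest curve $\gamma$, and $P$ does not vanish on all of $(\C^n,0)$, so it does not vanish on $\gamma$ unless $P$ is identically zero). For the order bound: writing $P = \sum_{j\ge 1}^{\deg P} P_j$ with $P_j$ homogeneous of degree $j$, and $G(t) = G_1 t^{q} + \cdots$ with $G_1\ne 0$ the leading Puiseux term of some exponent $q\ge 1$ (here $q=1$ from the length parametrization), the lowest-order contribution of $P_j(G(t))$ has exponent $jq$, so the total order is at least $\min_j\{jq : P_j(G_1)\ne 0\}\ge q\cdot(\text{smallest }j\text{ with }P_j(G_1)\ne0)$. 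If every $P_j$ vanishes at $G_1$ this does not immediately bound the order, so the cleanest fix is to choose $P$ among the degree-$<k$ polynomials more carefully, or simply to invoke that $\ord_\gamma$ of a function is finite iff the function is not identically zero on $\gamma$ and, for $P$ polynomial not vanishing on $\gamma$, $\ord_\gamma P\le \deg P$ by a degree count in the Puiseux parameter (each monomial $x^\alpha$ contributes order exactly $|\alpha|$ with respect to $t$ since $q=1$, so the whole series has order $\le\deg P$). I would phrase the final write-up around this last observation, which is the standard fact that the author has flagged as "standard."
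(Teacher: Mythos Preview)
Your overall strategy---show that every element of $I(F)$ has $\ord_\gamma\ge k$, then derive a contradiction from a low-degree polynomial lying in $I(F)$---is the same as the paper's. But the final step, the one you yourself flag as ``the only genuinely non-formal point'', contains a real error.

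The claim ``each monomial $x^\alpha$ contributes order exactly $|\alpha|$ with respect to $t$ since $q=1$'' is false. The length parametrization gives $G(t)=G_1 t+\cdots$ with $G_1\neq 0$ as a \emph{vector}, but individual components of $G_1$ may vanish. For instance, if (after reparametrizing so that $\norm{G(t)}=t$) one has $G(t)=(t,t^3+\cdots)$, then the monomial $x_2$ has degree $1$ but $\ord_\gamma x_2=3$. So for an \emph{arbitrary} nonzero polynomial $P$ of degree $<k$ produced by your dimension count you cannot conclude $\ord_\gamma P\le\deg P$, and the contradiction does not close. Your alternative ``$P(G(t))\not\equiv 0$ because $P$ does not vanish on $\gamma$'' is also unjustified: nothing prevents the curve $\gamma$ from lying inside the hypersurface $\{P=0\}$.

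The paper sidesteps this by choosing the low-degree witnesses from the outset rather than accepting whatever the dimension argument hands you. It picks a single linear functional $\ell$ with $\ell(G_1)\neq 0$, so that $\ord_\gamma\ell=1$ exactly, and then observes that $1,\ell,\ldots,\ell^{k-1}$ are linearly independent in $R_n/I(F)$: a nontrivial relation $\sum_{j<k}a_j\ell^j\in I(F)$ would force $\ord_\gamma\bigl(\sum a_j\ell^j\bigr)\ge k$, but since the powers $\ell^j$ have \emph{distinct} orders $0,1,\ldots,k-1$ along $\gamma$, the order of the sum equals the smallest $j$ with $a_j\neq 0$, which is $<k$. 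This yields $\dim R_n/I(F)\ge k$ directly. Your parenthetical option ``choose $P$ among the degree-$<k$ polynomials more carefully'' is exactly this fix; you just need to commit to it and restrict to polynomials in one well-chosen linear coordinate.
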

\begin{proof}
  Let $\ell$ be a functional on $\C^n$ which is transversal to the
  tangent vector to $\gamma$ at $t=0$. Then one can easily check that
  the functions $1,\ell,\ldots,\ell^{k-1}$ are linearly independent
  (over $\C$) in the local ring of $F$.  Hence the dimension of the
  local ring is at least $k$.
\end{proof}

We now examine the order of a multiplicity operator along an analytic
curve.
\begin{Thm} \label{thm:curve-growth} Let $\mo$ be a
  multiplicity operator of order $k$. Then
  \begin{equation}
    \ord_\gamma \mo(F) \ge \min\{\ord_\gamma f_i: i=1,\ldots,n\} - k
  \end{equation}
\end{Thm}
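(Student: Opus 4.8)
The plan is to reduce the statement to the one-dimensional case by restricting everything to the curve $\gamma$. Let $m = \min_i \ord_\gamma f_i$. If $m \le k$ the right-hand side is $\le 0$ and the inequality is trivial since an order is always $\ge 0$ (here $\mo(F)$ is a polynomial in the Taylor coefficients of $F$, hence analytic, so $\ord_\gamma \mo(F) \ge 0$ unless $\mo(F) \equiv 0$, in which case the order is $+\infty$ and there is nothing to prove). So I would assume $m > k$, hence in particular $\mult F \ge k$ by Lemma~\ref{lem:curve-mult} — though actually the relevant consequence I want is the quantitative growth statement, so let me instead set up a scaling argument.

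First I would fix a small parameter $t > 0$ and rescale. The curve point $G(t)$ lies at distance $t$ from the origin, and I want to look at $F$ near $G(t)$ — but more cleanly, I would perform a homothety $x \mapsto tx$ (or, more carefully, restrict attention to the polydisc $D^n_{ct}$ for a suitable small constant $c$ and rescale it to unit size). Under the rescaling $F(tx)$, the Taylor coefficients of order $j$ pick up a factor $t^j$, so a multiplicity operator $\mo$, being a differential operator of order $k$ and homogeneity degree $N-k$ where $N = \binom{n+k}{k}$, satisfies a homogeneity relation: evaluating $\mo$ on the rescaled map produces $\mo_0(F(t\cdot))$ which is a weighted-homogeneous combination of the coefficients, and one can track exactly the power of $t$ that comes out. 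The key point is that $\abs{\mo_0(F)(G(t))}$ — the operator applied at the moving point $G(t)$ — can be compared, after rescaling, to $\abs{\tilde\mo_0(\tilde F)}$ for the normalized map $\tilde F(x) = F(G(t) + tx)/\|F\|_{\text{loc}}$, where $\|F\|_{\text{loc}}$ is the sup of $\|F\|$ on the relevant polydisc around $G(t)$. Since $\ord_\gamma f_i \ge m$ for all $i$, we have $\|F\|_{\text{loc}} \lesssim t^{m}$ (up to constants and up to an arbitrarily small loss in the exponent, since $\ord_\gamma$ is a limit), i.e. $\log \|F\|_{\text{loc}} / \log t \to m$.

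The heart of the argument is then to bound $\abs{\mo_0(F)(G(t))}$ \emph{from below} by a power of $t$. For this I would invoke Theorem~\ref{thm:sphere-growth} (or rather its contrapositive, or directly the decomposition in Lemma~\ref{lem:local-resultant}) applied to the normalized map $\tilde F$: if $\tilde s = \abs{\tilde\mo_0(\tilde F)}$ were not too small, then $\tilde F$ would have to grow at a definite rate on small spheres, which would force $\|F\|_{\text{loc}}$ to be comparable to $t^k \cdot \|F\|_{\text{loc}}$ times $\tilde s$ — but we know $\|F\|$ restricted near $G(t)$ along the curve is of order $t^m$ with $m > k$. Quantitatively: after unwinding the rescaling, the growth of $F$ along $\gamma$ at scale $t$ is $\approx t^m$, whereas Theorem~\ref{thm:sphere-growth} would predict growth at least $\approx \abs{\mo_0(F)(G(t))} \cdot t^{?}$ on a nearby sphere; forcing $G(t+\delta)$ to sit on such a sphere and comparing exponents yields $\ord_\gamma \mo(F) \ge m - k$. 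The main obstacle I anticipate is bookkeeping the weighted homogeneity of $\mo$ correctly under the rescaling so that the exponent that emerges is exactly $m - k$ and not something weaker — in particular making sure that the "$-k$" is sharp and that the $\e$-losses inherent in working with $\ord_\gamma$ (a $\liminf$/limit of log-ratios) can be absorbed by letting $\e \to 0$ at the end. A secondary technical point is that $G(t)$ moves, so Theorem~\ref{thm:sphere-growth}, which is stated at the origin, must be applied at the translated center $G(t)$; this is legitimate because the hypothesis $\norm{F}\le 1$ and the conclusions are all local and translation-covariant up to adjusting the polydisc, but it should be stated carefully.
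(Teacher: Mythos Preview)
Your approach has a genuine gap, in two places.

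First, the claim that $\norm{F}_{\mathrm{loc}}\lesssim t^{m}$, where $\norm{F}_{\mathrm{loc}}$ is the sup of $\norm{F}$ on a polydisc of radius $\sim t$ around $G(t)$, is false. The hypothesis $\ord_\gamma f_i\ge m$ controls $f_i$ only \emph{along the curve}, not on full neighborhoods of curve points. For instance with $n=2$, $f_1=x_1^2$, $f_2=x_2^{10}$ and $\gamma$ the $x_2$-axis, one has $m=10$ but $\norm{F}_{\mathrm{loc}}\asymp t^2$. So the normalization you propose does not produce a map of unit norm.

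Second, even dropping the normalization and applying Theorem~\ref{thm:sphere-growth} directly at the translated center $G(t)$, you are blocked by the constraint $r<s=\abs{\mo(F)(G(t))}$. When $s(t)<t$ the largest admissible sphere has radius $\approx s(t)$; comparing the lower bound $B\,s(t)\,\tilde r^{k}$ on that sphere with the curve value $\norm{F(G(t'))}\lesssim t^{m}$ yields only $t^{m}\gtrsim s(t)^{k+1}$, i.e.\ $\ord_\gamma\mo(F)\ge m/(k+1)$. For $m>k+1$ this is strictly weaker than $m-k$, and there is no evident bootstrap. Rescaling in $x$ does not rescue this: a basic operator $\bmo_B^\alpha$ is indeed weighted-homogeneous under $x\mapsto\rho x$, but the weight $W$ can exceed $k$ (for $n=2$, $k=1$ the Jacobian minor has $W=2$), which only degrades the exponent further.

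The paper's argument is completely different and never invokes Theorem~\ref{thm:sphere-growth}. Fix $\tau$ and subtract from each $f_i$ its degree-$k$ Taylor polynomial \emph{in the curve parameter} at $\tau$, viewed as a function of a linear coordinate $t$ on $\C^n$. The modified map $F^\tau$ then satisfies $\ord_{\gamma_\tau} f_i^\tau>k$, so Lemma~\ref{lem:curve-mult} gives $\mult_{G(\tau)}F^\tau>k$ and hence $\mo_{G(\tau)}(F^\tau)=0$ by the defining property of multiplicity operators. Since $\mo$ is polynomial (hence locally Lipschitz) in the $k$-jet, $\abs{\mo(F)(G(\tau))}\le C\norm{E^\tau}$, where $E^\tau=F-F^\tau$ is built from derivatives of order $\le k$ of $f_i\circ G$ and therefore has order $\ge m-k$ in $\tau$. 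The whole computation stays on the curve, which is exactly why the hypothesis on $\ord_\gamma$ suffices.
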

\begin{proof}
  In this proof it is more convenient to let $t$ denote a Puiseux
  coordinate (i.e. a coordinate transversal to $\gamma$ at the origin)
  rather than the arc-length parameter. This does not change the
  notion of the order along $\gamma$.
  
  Let $\tau\in(\R_{\ge0},0)$ and consider the functions
  \begin{equation}
    \begin{aligned}
      f_1^\tau &= f_1 - E_1^\tau \qquad E_1^\tau =
      f_1(G(\tau))+\ldots+\frac1{k!}\frac\d{\d t^k}f_1(G(t))\vert_{t=\tau} 
      (t-\tau)^k \\
      &\vdots \\
      f_n^\tau &= f_n - E_n^\tau \qquad E_n^\tau =
      f_n(G(\tau))+\ldots+\frac1{k!}\frac\d{\d
        t^k}f_n(G(t))\vert_{t=\tau} (t-\tau)^k
    \end{aligned}
  \end{equation}
  If we let $\gamma_\tau$ denote the germ of $\gamma$ through the
  point $G(\tau)$, then it is clear by construction that
  $\ord_{\gamma_\tau} f_i^\tau> k$ for $i=1,\ldots,n$. Therefore by
  Lemma~\ref{lem:curve-mult} we have $\mult F^\tau\ge k$, where 
  $F^\tau=(f_1^\tau,...,f_n^\tau)$, and hence
  $\mo(F^\tau)\rest{G(\tau)}=0$.

  Since $\mo$ depends polynomially on the $k$-jet of its
  parameter, its derivative is uniformly bounded by a constant $C$
  around the $k$-jet $j(F)\in \left(J_{n,k}(G(\tau))\right)^n$ of $F$ at 
  $G(\tau)$.  Thus for every sufficiently small $\tau$,
  \begin{equation}
    \begin{aligned}
      \abs{\mo(F)\vert_{G(\tau)}} &= \abs{\mo(F^\tau+E^\tau)\vert_{G(\tau)}} \\
      &\le \abs{\mo(F^\tau)\vert_{G(\tau)}} + C \norm{E^\tau} =
      C\norm{E^\tau}
    \end{aligned}
  \end{equation}
  Finally, since $E^\tau$ is defined in terms of derivatives up to
  order $k$ of $f_i(G(t)),i=1,\ldots,n$ and since each derivative can
  decrease the order by at most $1$, we conclude that
  \begin{equation}
    \ord_\gamma \norm{E^t} \ge \min\{\ord_\gamma f_i: i=1,\ldots,n\} - k
  \end{equation}
  as claimed.
\end{proof}

\subsection{Integral closure and Multiplicity}
\label{sec:iclos-mult}

We recall the notions of \emph{multiplicity} and \emph{integral
  closure} of an ideal and some known results concerning them.

Let $p$ be a point in $\C^n$ and denote by $\cO_p$ the local ring of
holomorphic functions in a neighborhood of $p$. We denote by $\frak m$
the maximal ideal of $\cO_p$. Let $I\subset \cO_p$ be an $\frak
m$-primary ideal. The \emph{Hilbert-Samuel multiplicity} of $I$ on
$\cO_p(\C^n,0)$ is defined to be
\begin{equation}
  \mult I = d! \lim_{n\to\infty} \frac{\lambda(\cO_p/I^n)}{n^d}
\end{equation}
where $\lambda$ denotes the length as an $\cO_p$-module. When $I$ is
generated by a regular sequence, $I=\<f_1,\ldots,f_n\>$ this notion
agrees with the geometric notion of $\mult(f_1,\ldots,f_n)$
\cite[Proposition 11.1.10]{hune:ic}.

The Hilbert-Samuel multiplicity satisfies a Brunn-Minkowski type
inequality (\cite[Appendix by Tessier]{el:multiplicities}, see also
\cite{kk:local-mult}). Namely, for any two $\frak m$-primary ideals
$I,J\subset \cO_p$ we have
\begin{equation} \label{eq:brunn-minkowski} \mult^{1/n}(IJ) \le
  \mult^{1/n}I + \mult^{1/n}J
\end{equation}

Recall that the integral closure $\iclo I$ of an ideal $I\subset\cO_p$
is the ideal generated by all elements $x\in\cO_p$ satisfying an
equation $x^n+a_1x^{n-1}\ldots+a_n=0$ with $a_k\in I^k$. By a theorem
of Rees, we have $\mult I = \mult\iclo I$ \cite[Theorem 11.3.1]{hune:ic}.

Finally recall the following characterization of the integral closure
(see \cite[Theorem 2.1]{Teis:ic} or \cite[Theorem 5.A.1]{milman:poincare-metric}).
\begin{Thm} \label{thm:iclo-criterion} Let $f_1,\ldots,f_q$ be
  generators of $I$ and $f\in\cO_p$.  Then $f\in\iclo I$ if and only
  if for any germ of an analytic curve $\gamma:(\C,0)\to(\C^n,p)$ the
  ratio
  \begin{equation}
    \frac{f(\gamma(t))}{\max\{\abs{f_1(\gamma(t))},\ldots,\abs{f_q(\gamma(t))}\}}
  \end{equation}
  remains bounded as $t\to0$.
\end{Thm}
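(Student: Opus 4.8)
This is the classical valuative criterion for integral closure, and the plan is to prove the two implications separately.

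For the easy direction I would argue directly from an integral dependence relation $f^m+a_1f^{m-1}+\cdots+a_m=0$ with $a_j\in I^j$. Given an analytic arc $\gamma\colon(\C,0)\to(\C^n,p)$, set $g(t)=f(\gamma(t))$ and $h(t)=\max_i|f_i(\gamma(t))|$. Each $a_j$ is an $\cO_p$-combination of $j$-fold products of the $f_i$, so $|a_j(\gamma(t))|\le C_j h(t)^j$ for some constant $C_j$ and $t$ small. If the ratio $|g(t)|/h(t)$ were unbounded along a sequence $t_\nu\to0$, then dividing the inequality $|g|^m\le\sum_j|a_j\circ\gamma|\,|g|^{m-j}$ by $|g(t_\nu)|^m$ would give $1\le\sum_j C_j\bigl(h(t_\nu)/|g(t_\nu)|\bigr)^j\to0$, a contradiction; hence the ratio stays bounded.

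For the converse I would pass to the normalized blowup $\pi\colon X\to(\C^n,p)$ of the ideal $I$, on which $I\cdot\cO_X$ is invertible, locally generated by a single function $g$ on each affine chart. I would use the standard description $f\in\iclo I\iff\pi^*f\in I\cdot\cO_X$, i.e. the meromorphic function $\varphi:=\pi^*f/g$ is everywhere regular on $X$. Since $X$ is normal, $\varphi$ is regular as soon as it has no pole along any prime divisor, and the only candidates are the finitely many irreducible components $E_1,\dots,E_N$ of the exceptional divisor of $\pi$. Thus it suffices to verify $\ord_{E_j}(\pi^*f)\ge\ord_{E_j}(g)=\min_i\ord_{E_j}(\pi^*f_i)$ for each $j$. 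To read off $\ord_{E_j}$ I would choose a point $q\in E_j$ that is a smooth point of $X$, lies on no other $E_i$, and at which $E_j$ is smooth, and take a germ of a smooth analytic curve $\delta\colon(\C,0)\to(X,q)$ transverse to $E_j$; then $\ord_t(u\circ\delta)=\ord_{E_j}(u)$ for any $u$ whose divisor is supported on the $E_i$. Composing with the proper map $\pi$ yields an honest analytic arc $\gamma:=\pi\circ\delta$ in $\C^n$ (not contained in the common zero set), and the boundedness hypothesis for this $\gamma$ says exactly $\ord_t(f\circ\gamma)\ge\min_i\ord_t(f_i\circ\gamma)$, which is the required inequality. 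Ranging over $j$ shows $\varphi$ is regular, whence $\pi^*f\in I\cdot\cO_X$ and $f\in\iclo I$.

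The one genuinely nontrivial ingredient is the identification of $\iclo I$ with $\{f:\pi^*f\in I\cdot\cO_X\}$ on the normalized blowup — equivalently, the fact that integral closure is detected by the finitely many divisorial (Rees) valuations $\ord_{E_j}$. Granting that, everything reduces to order bookkeeping along arcs together with normality (``algebraic Hartogs''). The hard part, if one insists on a self-contained treatment, is precisely to re-derive this identification: one must manufacture the integral dependence equation from the $\ord_{E_j}$-inequalities, using that $I\cdot\cO_X$ is invertible and $\pi$ is proper to push global sections of a suitable power of $I\cdot\cO_X$ down to $(\C^n,p)$.
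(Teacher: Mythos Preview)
The paper does not give its own proof of this theorem: it is quoted as a known result with references to Teissier and to Bierstone--Milman, and is then used as a black box in the proof of Theorem~\ref{thm:mult-op-iclo}. So there is nothing in the paper to compare your argument against.

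That said, your sketch is the standard valuative proof and is essentially correct. The forward direction is fine. For the converse, your reduction to the Rees valuations via the normalized blowup is the usual route; you have correctly isolated the one nontrivial input, namely the identification $\iclo I=\{f:\pi^*f\in I\cdot\cO_X\}$ on the normalized blowup (equivalently, that the finitely many divisorial valuations $\ord_{E_j}$ determine integral closure). Two small points worth tightening if you want a self-contained write-up: first, when you pick the smooth point $q\in E_j$ and the transverse arc $\delta$, you should also arrange that $q$ avoids the indeterminacy/zero locus of the local generator $g$ away from $E_j$, so that $\ord_t(g\circ\delta)=\ord_{E_j}(g)$ and likewise for the $\pi^*f_i$; this is a genericity choice and causes no difficulty. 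Second, the arc $\gamma=\pi\circ\delta$ could in principle be constant if $\delta$ mapped into a fiber of $\pi$, but since $\pi$ is an isomorphism off the exceptional set and $\delta$ is transverse to $E_j$, $\gamma$ is a genuine nonconstant germ; it is worth saying this explicitly.
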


To begin our study of the relation between the multiplicity operators
and the notions of multiplicity and integral closure we make the
following definition.
\begin{Def}
  The \emph{$k$-th multiplicity operator ideal} of $I$ is defined to
  be
  \begin{equation}
    M_{n,k}(I) = I+\<\mo(f_1,\ldots,f_n)\>
  \end{equation}
  where $\mo$ varies over the multiplicity operators of order
  $k$, and $(f_1,\ldots,f_n)$ varies over all $n$-tuples of elements
  in $I$.
\end{Def}

We now present a result relating the multiplicity operator ideal to
the integral closure and multiplicity of an ideal.

\begin{Thm} \label{thm:mult-op-iclo} Let $I\subset\cO_p$ be an
  ideal. Then
  \begin{equation} \label{eq:mult-op-iclo} {\frak m}^k M_{n,k}(I)
    \subset \iclo I.
  \end{equation}
  In particular, if $I$ is $\frak m$-primary then
  \begin{equation}
    \mult (I+{\frak m}^k M_{n,k}(I)) = \mult I.
  \end{equation}
\end{Thm}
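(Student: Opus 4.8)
\textbf{Proof plan for Theorem~\ref{thm:mult-op-iclo}.}
The plan is to prove the inclusion \eqref{eq:mult-op-iclo} by testing membership in $\iclo I$ along an arbitrary analytic curve, using the valuative criterion of Theorem~\ref{thm:iclo-criterion}, and then to deduce the multiplicity statement formally from the Brunn--Minkowski inequality \eqref{eq:brunn-minkowski} together with Rees's theorem $\mult I = \mult \iclo I$. So fix a generator $g\in {\frak m}^k M_{n,k}(I)$; by the definition of $M_{n,k}(I)$ and of ${\frak m}^k$, it suffices to treat a generator of the form $g = x^\beta\cdot\mo(f_1,\ldots,f_n)$ with $|\beta|\ge k$, where $\mo$ is a $k$-th multiplicity operator and $f_1,\ldots,f_n\in I$ (terms already in $I$ are trivially in $\iclo I$). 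Let $\gamma:(\C,0)\to(\C^n,p)$ be a germ of an analytic curve. I must show that $g(\gamma(t))$ is bounded by a constant times $\max_j |h_j(\gamma(t))|$, where $h_1,\ldots,h_q$ generate $I$.

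The key step is to bound $|\mo(F)(\gamma(t))|$ along the curve in terms of $m(t):=\max_i|f_i(\gamma(t))|$. This is exactly the content of Theorem~\ref{thm:curve-growth}, but applied not at the fixed base point $p$ but at the moving point $\gamma(t)$: writing $d = \min_i \ord_\gamma f_i$ (with respect to the arc-length parametrization of $\gamma$), Theorem~\ref{thm:curve-growth} gives $\ord_\gamma \mo(F) \ge d - k$. Since $\ord_\gamma x^\beta \ge |\beta| \cdot \ord_\gamma(\text{each coordinate})$ and, more to the point, since each $f_i\in I$ forces $\ord_\gamma f_i \ge \ord_\gamma h_{j(t)} \ge \min_j \ord_\gamma h_j$ along $\gamma$, one gets
\begin{equation*}
  \ord_\gamma g = \ord_\gamma(x^\beta \mo(F)) \ge |\beta| \cdot (\min_j \ord_\gamma x_j) + d - k \ge k + (\min_j \ord_\gamma h_j) - k = \min_j \ord_\gamma h_j,
\end{equation*}
where for the middle inequality I use $|\beta|\ge k$, $\ord_\gamma x_j \ge 1$ (as $\gamma$ passes through $p$, taken to be the origin), and $d\ge \min_j\ord_\gamma h_j$. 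Hence $\ord_\gamma g \ge \ord_\gamma \big(\max_j |h_j|\big)$, so the ratio in Theorem~\ref{thm:iclo-criterion} stays bounded, and $g\in\iclo I$. Since $\gamma$ and the generator $g$ were arbitrary, \eqref{eq:mult-op-iclo} follows.

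For the second assertion, assume $I$ is $\frak m$-primary, so $M_{n,k}(I)\supseteq I$ is also $\frak m$-primary. Set $J = {\frak m}^k M_{n,k}(I)$. From $I\subseteq I + J \subseteq \iclo I$ (the right inclusion by \eqref{eq:mult-op-iclo}, noting $I\subseteq\iclo I$ and $J\subseteq\iclo I$, hence $I+J\subseteq \iclo I$), the monotonicity of Hilbert--Samuel multiplicity under inclusion of $\frak m$-primary ideals gives $\mult I \ge \mult(I+J) \ge \mult \iclo I$; by Rees's theorem $\mult\iclo I = \mult I$, so all three are equal, which is the claim. I would remark that the Brunn--Minkowski inequality \eqref{eq:brunn-minkowski} is in fact not needed for this particular deduction — monotonicity plus Rees suffices — though it is the natural tool if one instead wants estimates on $\mult M_{n,k}(I)$ itself.

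\textbf{Main obstacle.} The delicate point is the application of Theorem~\ref{thm:curve-growth} at the moving point $\gamma(t)$ rather than at a fixed base point: one must be careful that the constant $C$ bounding the derivative of $\mo$ (which in the proof of Theorem~\ref{thm:curve-growth} is taken near the $k$-jet of $F$ at the relevant point) can be chosen uniformly as the point ranges over a neighborhood of $p$ along $\gamma$, and that the order estimate $\ord_\gamma \mo(F)\ge d-k$ is genuinely the order along $\gamma$ and not merely a pointwise bound. Concretely, Theorem~\ref{thm:curve-growth} as stated already is an order statement along $\gamma$, so this is really just a matter of invoking it correctly; the only genuine subtlety is keeping track of the arc-length versus Puiseux parametrization (which, as noted in the proof of Theorem~\ref{thm:curve-growth}, does not affect orders) and verifying that $f_i\in I$ indeed forces $\ord_\gamma f_i\ge\min_j\ord_\gamma h_j$ — this last is immediate since $f_i = \sum_j a_{ij}h_j$ with $a_{ij}\in\cO_p$, so $|f_i(\gamma(t))| \le (\max_j|h_j(\gamma(t))|)\cdot\sum_j|a_{ij}(\gamma(t))|$ with the $a_{ij}$ bounded near $0$.
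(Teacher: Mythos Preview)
Your proposal is correct and follows exactly the route the paper takes: the paper's proof is the one-liner ``This follows immediately from Theorems~\ref{thm:curve-growth} and~\ref{thm:iclo-criterion},'' and you have simply (and correctly) unpacked the details of that deduction, including the observation that $\ord_\gamma x_j\ge 1$ in the arc-length parametrization and that $f_i\in I$ forces $\ord_\gamma f_i\ge\min_j\ord_\gamma h_j$. Your remark that the multiplicity statement needs only monotonicity plus Rees (not Brunn--Minkowski, which is reserved for Corollary~\ref{cor:mult-up-ideal-mult}) is also right, and your ``main obstacle'' dissolves exactly as you say---Theorem~\ref{thm:curve-growth} is already an order statement along $\gamma$, so no uniformity issue arises.
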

\begin{proof}
  This follows immediately from Theorems~\ref{thm:curve-growth}
  and~\ref{thm:iclo-criterion}.
\end{proof}

And the following corollary.

\begin{Cor} \label{cor:mult-up-ideal-mult} Let $I\subset\cO_p$ be an
  $\frak m$-primary ideal. Then
  \begin{equation}
    \mult^{1/n} M_{n,k}(I) \ge \mult^{1/n} I - k
  \end{equation}
\end{Cor}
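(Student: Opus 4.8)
The plan is to deduce the estimate formally from Theorem~\ref{thm:mult-op-iclo}, together with the Brunn--Minkowski inequality~\eqref{eq:brunn-minkowski}, Rees' theorem $\mult I=\mult\iclo I$, and the elementary fact that $\mult{\frak m}^k=k^n$ in $\cO_p$.

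First I would note that, since $I$ is ${\frak m}$-primary and $I\subset M_{n,k}(I)$, the ideal $M_{n,k}(I)$ is again ${\frak m}$-primary, and hence so is the product ${\frak m}^k M_{n,k}(I)$; in particular all the Hilbert--Samuel multiplicities appearing below are well defined and positive. Next, Theorem~\ref{thm:mult-op-iclo} supplies the inclusion ${\frak m}^k M_{n,k}(I)\subset\iclo I$. Since the Hilbert--Samuel multiplicity is order-reversing under inclusion of ${\frak m}$-primary ideals, and $\mult\iclo I=\mult I$ by Rees, this gives $\mult({\frak m}^k M_{n,k}(I))\ge\mult I$.

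It then remains to bound $\mult({\frak m}^k M_{n,k}(I))$ from above. Here I would apply~\eqref{eq:brunn-minkowski} to the pair of ${\frak m}$-primary ideals ${\frak m}^k$ and $M_{n,k}(I)$, using $\mult^{1/n}{\frak m}^k=k$ (immediate from $\mult{\frak m}=1$ and the scaling $\mult J^k=k^n\mult J$, or directly from the leading term of the Hilbert--Samuel polynomial of ${\frak m}^k$), to obtain
\[
  \mult^{1/n}({\frak m}^k M_{n,k}(I)) \le k + \mult^{1/n} M_{n,k}(I).
\]
Chaining this with the lower bound from the previous step yields $\mult^{1/n} I \le k + \mult^{1/n} M_{n,k}(I)$, which rearranges to the assertion.

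I do not expect any genuine difficulty here: the whole argument is a short formal manipulation, and all the substantive content has already been extracted in Theorems~\ref{thm:curve-growth} and~\ref{thm:mult-op-iclo}. The only points that will need a sentence of care are the ${\frak m}$-primariness of the ideals involved (so that the multiplicity is defined) and the direction of the monotonicity of multiplicity under inclusion.
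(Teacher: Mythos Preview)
Your proposal is correct and follows exactly the same route as the paper: apply Theorem~\ref{thm:mult-op-iclo} together with Rees' theorem to get $\mult({\frak m}^k M_{n,k}(I))\ge\mult I$, then bound the left side via the Brunn--Minkowski inequality with $\mult^{1/n}{\frak m}^k=k$. The only difference is that you spell out the ${\frak m}$-primariness and the monotonicity of multiplicity, which the paper leaves implicit.
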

\begin{proof}
  By~\eqref{eq:mult-op-iclo} we have
  \begin{equation}
    \mult ({\frak m}^k M_{n,k}(I)) \ge \mult\iclo I = \mult I.
  \end{equation}
  On the other hand, by~\eqref{eq:brunn-minkowski} we have
  \begin{equation}
    \begin{aligned}
      \mult^{1/n} ({\frak m}^k M_{n,k}(I)) &\le \mult^{1/n}{\frak m}^k + \mult^{1/n} M_{n,k}(I) \\
      &= k + \mult^{1/n} M_{n,k}(I),
    \end{aligned}
  \end{equation}
  and the conclusion easily follows.
\end{proof}

\section{Applications for Noetherian Functions}

A subring $K\subset\cO(U)$ for a domain $U\subset\C^n$ is called a
\emph{ring of Noetherian functions} if
\begin{enumerate}
\item It contains the polynomial ring $\C[x_1,\ldots,x_n]$ and is
  finitely generated as a ring over it.
\item It is closed under the partial derivatives
  $\pd{}{x_i},i=1,\ldots,n$.
\end{enumerate}
This notion is due to Khovanskii. A related notion where the
assumption of finite generation is replaced by a topological noetherianity
condition was introduced by Tougeron in \cite{tougeron:noetherian}.

More explicitly, we consider the polynomial ring
$R=\C[x_1,\ldots,x_n,f_1,\ldots,f_m]$ along with the system of
differential equations
\begin{equation} \label{eq:noetherian-system} \pd{f_i}{x_j} = P_{ij},
  \qquad P_{ij}\in R \quad \mbox{for}\quad i=1,\ldots,n,\quad
  j=1,\ldots,m.
\end{equation}
For simplicity we will assume that this system is integrable
everywhere, and therefore $\spec R$ is foliated by graphs of solutions
of satisfying~\eqref{eq:noetherian-system}.
\begin{Rem}
  In general it may happen that the locus of integrability of the
  system forms only an algebraic subvariety. This extra generality is
  studied in \cite{GabKho} and our arguments can be extended using
  their approach. We restrict attention to the completely integrable
  case to simplify the presentation.
\end{Rem}

For every $p\in\spec R$ we denote by $\cL_p$ the graph of the solution
of the system~\eqref{eq:noetherian-system} around the point $p$. On
$\cL_p$ we may think of $f_1,\ldots,f_m$ as functions of the variables
$x_1,\ldots,x_n$.

Let $p\in\spec R$ and let $P_1,\ldots,P_n\in R$ be an $n$-tuple of
polynomials. Suppose that $\deg P_{ij}\le\delta$ and $\deg P_i\le d$.
If
\begin{equation}
  \mult_p (P_1\rest{\cL_p},\ldots,P_n\rest{\cL_p}) < \infty,
\end{equation}
then it is natural to ask for an explicit upper bound for this
multiplicity in terms of the parameters $n,m,d,\delta$. This problem
was studied in \cite{GabKho} using topological methods related to the
study of Milnor fibers, leading to the following estimate

\begin{Thm} \label{thm:gk-mult}
  The multiplicity $\mult_p
  (P_1\rest{\cL_p},\ldots,P_n\rest{\cL_p})$, assuming it is finite,
  does not exceed the maximum of the following two numbers:
  \begin{eqnarray*}
    &\frac{1}{2}Q\((m+1)(\delta-1)[2\delta(n+m+2)-2m-2]^{2m+2}
      +2\delta(n+2)-2\)^{2(m+n)} \\
    &\frac{1}{2}Q\(2(Q+n)^n(d+Q(\delta-1))\)^{2(m+n)},
    \quad \text{where} \quad Q=en\(\frac{e(n+m)}{\sqrt{n}}\)^{\ln n+1}
      \(\frac{n}{e^2}\)^n
  \end{eqnarray*}
\end{Thm}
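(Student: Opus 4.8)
Write $F=(P_1\rest{\cL_p},\ldots,P_n\rest{\cL_p})$, so the quantity to be bounded is $\mu:=\mult_pF$, assumed finite; equivalently $p$ is an isolated point of $\{F=0\}$ in $\cL_p$. The plan is to exploit the fact that, \emph{along a Noetherian leaf}, multiplicity operators evaluate to honest polynomials of explicitly controlled degree. Let $\nabla_j=\pd{}{x_j}+\sum_{i=1}^m P_{ij}\pd{}{f_i}$, $j=1,\ldots,n$, be the total derivations on $R$ attached to the system~\eqref{eq:noetherian-system} (they commute, by integrability). Then $\pd{^\alpha}{x^\alpha}(P_i\rest{\cL_p})=(\nabla^\alpha P_i)\rest{\cL_p}$, and since each $\nabla_j$ raises degree by at most $\delta-1$ we get $\deg\nabla^\alpha P_i\le d+|\alpha|(\delta-1)$. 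A basic multiplicity operator of order $k$, after expansion along its $k$ constant columns, is a minor of size $\dim J_{n,k}-k=\binom{n+k}{k}-k$ whose entries are Taylor coefficients of $F$ of order $\le k$; hence for any such operator $\mo$ we have $\mo(F)=\hat P_\mo\rest{\cL_p}$ for a polynomial $\hat P_\mo\in R$ with
\[
  \deg\hat P_\mo\le D_k:=\Bigl(\binom{n+k}{k}-k\Bigr)\bigl(d+k(\delta-1)\bigr).
\]

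\textbf{Reduction to an algebraic descending chain.} By the basic property of multiplicity operators, for any $q\in\spec R$ one has $\mult_q(P_1\rest{\cL_q},\ldots,P_n\rest{\cL_q})>k$ if and only if every $\hat P_\mo$ vanishes at $q$. Therefore the ``excessive multiplicity locus'' $\mathcal Z_k:=\{q:\mult_q>k\}$ is an algebraic subvariety of $\spec R=\C^{n+m}$ cut out by polynomials of degree $\le D_k$, and $\mathcal Z_0\supseteq\mathcal Z_1\supseteq\cdots$ is a descending chain, stabilizing by noetherianity at the locus of non-isolated zeros. Since $\mult_pF=\mu$ we have $p\in\mathcal Z_{\mu-1}\setminus\mathcal Z_\mu$, so the chain has not stabilized before step $\mu$; hence $\mu$ is bounded by the index at which the chain stabilizes, and it remains to bound that index.

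\textbf{Effective Bezout and the explicit constants.} Bounding this stabilization index is where all the work lies. One proceeds dimension by dimension: the dimension of $\mathcal Z_k$ in $\C^{m+n}$ can drop at most $m+n$ times, and between drops the number of strict refinements is bounded, via a Bezout-type count, in terms of the degrees $D_k$. The delicate point --- and the origin of the two cases and of the doubly-exponential shape of the bound --- is that $D_k$ itself grows with $k$, so the argument must be organized so that only orders $k$ up to $Q=en\bigl(\tfrac{e(n+m)}{\sqrt n}\bigr)^{\ln n+1}\bigl(\tfrac n{e^2}\bigr)^n$ are ever relevant; this $Q$, which depends on $n$ and $m$ alone, is precisely a bound on how far the prolongation can proceed before the chain stabilizes (in effect the degree of nonholonomy of the system). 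At order $k=Q$ the controlling degree is $D_Q\le(Q+n)^n(d+Q(\delta-1))$, and feeding this into the effective Bezout count for an isolated intersection in $\C^{m+n}$ --- with an extra factor for the passage to a generic algebraic deformation, which squares the ambient exponent and supplies the $\tfrac12 Q$ --- yields $\mu\le\tfrac12 Q\bigl(2(Q+n)^n(d+Q(\delta-1))\bigr)^{2(m+n)}$. The first displayed bound is the outcome of the same computation in the regime where the controlling degree is dominated by the differential complexity $\delta,m,n$ of the system rather than by $d$. One can reach the counting step concretely via Corollary~\ref{cor:perts}: since $\mult_pF=\mu$ some multiplicity operator of order $\mu$ is nonzero at $p$, so $\mu$ equals the number of simple zeros in a small ball about $p$ of a generic constant perturbation $(P_1-c_1,\ldots,P_n-c_n)\rest{\cL_p}$, again a Noetherian system, which is bounded by the same Bezout--Khovanskii estimate.

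\textbf{Main obstacle.} The genuinely new ingredient is the first step: multiplicity operators replace the transcendental, jet-theoretic ``excessive contact'' conditions of Gabrielov--Khovanskii by the vanishing of explicit polynomials of degree $\le D_k$, so that the whole question reduces to the stabilization of a descending chain of \emph{algebraic} subvarieties of $\spec R$. The difficulty is then entirely concentrated in the effective Bezout estimate for that chain --- controlling the growth of $D_k$ so as to pin down the explicit value of $Q$, and converting an isolated algebraic intersection multiplicity in $\C^{m+n}$ into a closed-form bound while carrying every constant. This is where I expect the bulk of the proof to reside; it amounts to repackaging the Gabrielov--Khovanskii estimates in the language of multiplicity operators.
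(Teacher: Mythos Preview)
The paper does not prove this theorem. Theorem~\ref{thm:gk-mult} is quoted verbatim from \cite{GabKho} (``This problem was studied in \cite{GabKho} using topological methods related to the study of Milnor fibers, leading to the following estimate''), and no proof is supplied; the authors' own multiplicity-operator method is instead used to prove the weaker, cruder bound of Theorem~\ref{thm:noetherian-mult-bound}. So there is no ``paper's own proof'' to compare against, and in particular the approach you outline --- via multiplicity operators and an algebraic descending chain --- is explicitly \emph{not} the route by which the stated constants were obtained: those come from a Milnor-fiber/topological argument in the cited work.

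As for the proposal itself, it is a plan rather than a proof. The reduction you describe (the loci $\mathcal Z_k$ are algebraic, cut out by polynomials of degree $\le D_k$) is correct and is essentially the starting point of Theorem~\ref{thm:noetherian-mult-bound}. But the heart of the claimed bound --- that the chain stabilizes by step $Q$ with exactly the displayed value of $Q$, and that the Bezout count then produces precisely the two displayed expressions with the factor $\tfrac12 Q$ and the exponent $2(m+n)$ --- is asserted, not argued. You say as much yourself (``This is where I expect the bulk of the proof to reside''). There is no reason to expect that a direct Bezout bookkeeping on the chain $\mathcal Z_k$ reproduces the specific Gabrielov--Khovanskii constants; indeed the paper's own attempt at exactly this kind of argument yields the visibly different exponent $n(n+1)^{2m}(m+n)^m$ of Theorem~\ref{thm:noetherian-mult-bound}. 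So the proposal does not establish the statement, and the gap is not a detail but the entire quantitative content of the theorem.
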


In subsection~\ref{sec:noetherian-mult} we present a simple approach
to this problem using multiplicity operators. While the estimates
obtained by this method are not as sharp as those of \cite{GabKho}, the
proof is simple and illustrates the manner in which multiplicity
operators can be used in the study of multiplicities.

Having obtained a bound on the multiplicity of a Noetherian
intersection, it is natural to move beyond the local context, and ask
whether one can estimates the number of solutions of a system of
equations $P_1\rest\cL=\ldots=P_n\rest\cL=0$ on a compact piece of the
graph $\cL$.  In subsection~\ref{sec:noetherian-semilocal} we use the
multiplicity operators to prove a result in this direction.

\subsection{An estimate for the intersection multiplicity of
  Noetherian functions}
\label{sec:noetherian-mult}

For any point $p\in\spec R$, one can consider the polynomials
$P_1,\ldots,P_n$ as functions of $x_1,\ldots,x_n$ by restricting them
to the graph $\cL_p$. Then their derivatives with respect to
$x_1,\ldots,x_n$ are again given by polynomial functions according
to~\eqref{eq:noetherian-system}.

Let $\mo$ be a $k$-th multiplicity operator in the variables
$x_1,\ldots,x_n$.  By the above, it follows that
$\mo(P_1,\ldots,P_n)$ (viewed, at any point $p\in\spec R$, as
the multiplicity operator of $P_1\rest{\cL_p},\ldots,P_n\rest{\cL_p}$)
is a polynomial, and moreover
\begin{equation} \label{eq:nf-deg-bound} \deg
  \mo(P_1,\ldots,P_n) \le {n+k \choose k} (d+k \delta).
\end{equation}

The following lemma shows that if an algebraic variety meets a graph
$\cL_p$ in an isolated point, then generic nearby intersections are
transversal.

\begin{Lem} \label{lem:transversality} Let $V\subset\spec R$ be an
  algebraic variety, and $p\in V$ a point in $V$. Suppose that
  $V\cap\cL_p=\{p\}$ in a small neighborhood of $p$. Then at a generic
  point $q\in V$ near $p$, the graph $\cL_q$ is transversal to $V$.
\end{Lem}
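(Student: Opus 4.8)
The plan is to reduce the statement to a generic-smoothness / Sard-type argument on the incidence variety parametrizing the points of $V$ together with the foliation $\cL$. Recall that $\spec R$ carries the foliation by the graphs $\cL_q$ of solutions of~\eqref{eq:noetherian-system}, and that through every point $q$ there passes exactly one leaf $\cL_q$, of dimension $n$. First I would pass to the locus $V_{\mathrm{reg}}$ of smooth points of $V$ of the appropriate (top) dimension near $p$; since $V\cap\cL_p=\{p\}$ and $\dim\cL_p=n$ inside $\spec R$ (which has dimension $n+m$), a dimension count shows $\codim V \ge n$, hence $\dim V\le m$, and transversality of $\cL_q$ to $V$ at $q$ means precisely that $T_qV + T_q\cL_q = T_q\spec R$, i.e.\ that the leaf meets $V$ in the expected dimension $\dim V + n - (n+m) = \dim V - m$ — in particular, if $\dim V = m$ this means $\cL_q\cap V$ is zero-dimensional at $q$, and if $\dim V<m$ it means the intersection is empty near $q$, which is even stronger. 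It therefore suffices to show that the non-transversality locus $\Sigma\subset V$ is a proper algebraic (or analytic) subvariety in a neighborhood of $p$.

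The key step is the following. Consider the ``leaf map'' in a neighborhood $W$ of $p$: choosing a local analytic transversal, the foliation gives a submersion $\pi\colon W\to B$ onto an $m$-dimensional base, whose fibers are the pieces of the leaves $\cL_q$. Transversality of $\cL_q$ to $V$ at $q\in V\cap W$ is equivalent to $q$ being a regular point of the restriction $\pi|_V\colon V\cap W\to B$. By the generic smoothness theorem (Sard's theorem in the analytic category, applied to the smooth locus of $V$), the set of critical values of $\pi|_{V_{\mathrm{reg}}}$ has measure zero in $B$, and its preimage — together with $V_{\mathrm{sing}}$ — is a proper subvariety $\Sigma$ of $V\cap W$. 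Consequently the generic point $q\in V$ near $p$ lies outside $\Sigma$, and at such $q$ the leaf $\cL_q$ is transversal to $V$, as claimed. The hypothesis $V\cap\cL_p=\{p\}$ enters to guarantee that $p$ itself is isolated in its fiber $\pi^{-1}(\pi(p))\cap V$, which forces $\dim V\le m$ and hence makes the generic-fiber dimension count meaningful; it also guarantees $V\cap W$ is nonempty of the expected local dimension so that the generic point is well-defined.

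The main obstacle I expect is purely bookkeeping: making precise the local structure of the foliation near $p$ (using complete integrability of~\eqref{eq:noetherian-system}, which is assumed globally, so this is automatic) and checking that ``transversal to $V$'' coincides with ``regular value of $\pi|_V$'' even at points where $\dim(V\cap\cL_q)$ could a priori jump. A clean way to finish, avoiding any appeal to Sard in the holomorphic setting, is algebraic: the non-transversality condition at a smooth point $q\in V$ is the vanishing of a certain maximal minor built from a basis of $T_qV$ (given by the Jacobian of defining equations of $V$) together with the vectors $\partial/\partial x_j + \sum_i P_{ij}\,\partial/\partial f_i$ spanning $T_q\cL_q$; this minor is a polynomial in the coordinates of $q$, so $\Sigma = \{q\in V : \text{this minor vanishes}\}$ is algebraic, and it is proper as soon as one exhibits a single point of $V$ near $p$ where the minor is nonzero. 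Such a point exists because otherwise $\cL_q\subset$ (a small neighborhood intersected with $V$) for all $q\in V$ near $p$, forcing $\cL_p$ to sweep out a set contained in $V$, contradicting $V\cap\cL_p=\{p\}$. This algebraic route both proves the lemma and incidentally shows $\Sigma$ is cut out by an equation of controlled degree, which is convenient for the effective applications in the next subsection.
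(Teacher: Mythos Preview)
Your approach is essentially the same as the paper's: both introduce the local leaf-space projection $\pi$ whose fibers are the leaves, identify transversality of $\cL_q$ to $V$ at $q$ with $q$ being a regular point of $\pi|_V$, and then invoke Sard/generic smoothness. The paper's version is terser---it just notes that the hypothesis makes $\pi|_{D\cap V}$ proper and finite-to-one, hence $\dim(D\cap V)=\dim\operatorname{Im}\pi$, and applies Sard---while you spell out the dimension bookkeeping and add an optional algebraic variant via Jacobian minors, but the core argument is identical.
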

\begin{proof}
  Let $D$ be a small ball around $p$, and consider the projection
  $\pi:(D\cap V)\to(\C^m,0)$ from $D\cap V$ to the $m$-dimensional
  space parameterizing the leafs around $\cL_p$. By assumption $\pi$ is
  a proper finite-to-one map, and hence $\dim D\cap V=\dim\Im\pi$.
  Then by Sard's theorem, a generic point $q\in D\cap V$ is a
  non-critical point of $\pi$, which means that $V$ is transversal to
  $\cL_q$ at $q$ as stated.
\end{proof}

We now show that when the variety of an ideal $I$ meets a graph
$\cL_p$ transversally, the multiplicity of $I\rest{\cL_p}$ at $p$ can
be bounded in algebraic terms.

\begin{Lem} \label{lem:transverse-mult-bound} Let
  $I=\<F_1,\ldots,F_k\>\subset R$ and $p\in V(I)$ a point where $V(I)$
  is smooth of codimension $k$ and transversal to $\cL_p$. Suppose
  $\deg F_i\le D$ for $i=1,\ldots,k$. Then
  \begin{equation}
    \mult_p I\rest{\cL_p} \le D^{kn}.
  \end{equation}
\end{Lem}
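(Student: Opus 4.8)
Note that the Noetherian structure of $\cL_p$ plays no role in what follows: all that is used is that $\cL_p$ is a smooth germ of dimension $n$ transversal to $V(I)$ at $p$. Transversality forces $k\le n$ (otherwise $V(I)$ cannot meet $\cL_p$ transversally at a point), and the multiplicity in the statement is the ordinary local intersection number when $k=n$ and the Hilbert--Samuel multiplicity of $\cO(\cL_p)_p/I\rest{\cL_p}$ in general. The plan is to pass to suitable local coordinates that turn the statement into a bound on orders of vanishing of the $F_i$.

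First I would choose local analytic coordinates $(z_1,\dots,z_{n+m})$ centered at $p$ in which $V(I)_{\mathrm{red}}=\{z_1=\dots=z_k=0\}$ and, simultaneously, $\cL_p=\{z_{n+1}=\dots=z_{n+m}=0\}$; this simultaneous straightening is possible because the two germs are smooth and transversal at $p$. Under the resulting identification $\cO(\cL_p)_p\cong\C\{z_1,\dots,z_n\}$ we have $I\rest{\cL_p}=\langle g_1,\dots,g_k\rangle$ with $g_i(z_1,\dots,z_n):=F_i(z_1,\dots,z_n,0,\dots,0)$, and $V(g_1,\dots,g_k)=V(I)\cap\cL_p=\{z_1=\dots=z_k=0\}\subset\C^n$, so that $g_1,\dots,g_k$ is a regular sequence of length $k$ in the regular local ring $\cO(\cL_p)_p$. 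By the classical bound for the multiplicity of a parameter ideal in a regular local ring,
\begin{equation*}
  \mult_p I\rest{\cL_p}=e\bigl(\cO(\cL_p)_p/\langle g_1,\dots,g_k\rangle\bigr)\le\prod_{i=1}^{k}\ord_p g_i ,
\end{equation*}
so it remains to prove $\ord_p g_i\le D$ for each $i$.

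Let $\rho_i$ be the order to which $F_i$ vanishes along the smooth variety $V(I)_{\mathrm{red}}$, i.e.\ the largest integer with $F_i\in\langle z_1,\dots,z_k\rangle^{\rho_i}$ near $p$. On the one hand, restricting $F_i$ to a generic affine $k$-plane $L'$ through $p$ transversal to $V(I)_{\mathrm{red}}$: the local equations of $V(I)_{\mathrm{red}}$ restrict to a regular system of parameters of $\cO_{L',p}$, so $F_i\rest{L'}\in\mathfrak m_{L',p}^{\rho_i}$, while $F_i\rest{L'}$ is a nonzero polynomial of degree $\le D$ on $L'\cong\C^k$; hence $\rho_i\le\ord_p(F_i\rest{L'})\le D$. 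On the other hand I claim $\ord_p g_i=\rho_i$. Put $L:=\{z_{k+1}=\dots=z_{n+m}=0\}\cong\C^k$, a $k$-plane contained in $\cL_p$; then $g_i\rest L=F_i(z_1,\dots,z_k,0,\dots,0)$ lies in $\langle z_1,\dots,z_k\rangle^{\rho_i}$, and its lowest-degree homogeneous part is a form of degree $\rho_i$ in $z_1,\dots,z_k$ that coincides with the lowest-degree homogeneous part of $g_i$ itself. This form is nonzero: the ideal $\langle g_1\rest L,\dots,g_k\rest L\rangle$ has zero locus $V(I)\cap L=\{0\}$ in $\C^k$, so none of the $k$ functions $g_i\rest L$ can vanish identically (fewer than $k$ equations cannot cut out an isolated point of $\C^k$), whence $g_i\rest L$ has a nonzero initial form, necessarily of degree $\rho_i$. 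Consequently $\ord_p g_i=\rho_i\le D$, and altogether $\mult_p I\rest{\cL_p}\le\prod_{i=1}^{k}\rho_i\le D^{k}\le D^{kn}$. The point that needs care is the interplay of the smoothness of $V(I)$, the complete-intersection shape of $I$, and the transversality of $\cL_p$ in establishing the identification $\ord_p g_i=\rho_i$ — that restriction to $\cL_p$ does not raise the vanishing order past $D$; the remaining steps are routine commutative algebra.
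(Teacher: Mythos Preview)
Your argument contains a genuine error: the ``classical bound'' you invoke,
\[
e\bigl(\cO(\cL_p)_p/\langle g_1,\dots,g_k\rangle\bigr)\;\le\;\prod_{i=1}^{k}\ord_p g_i,
\]
is false --- the inequality in fact goes the other way. Take $k=n=2$, $m=0$ (so $\cL_p=\C^2$), $F_1=g_1=x_1$ and $F_2=g_2=x_1+x_2^N$. Then $V(I)=\{0\}$ is smooth of codimension $2$ and trivially transversal to $\cL_p$; the orders are $\ord_p g_1=\ord_p g_2=1$, yet
\[
\mult_p\langle g_1,g_2\rangle=\dim_\C\C\{x_1,x_2\}/\langle x_1,x_2^N\rangle=N.
\]
The correct classical statement is $e(\langle g_1,\dots,g_n\rangle)\ge\prod\ord_p g_i$, with equality exactly when the initial forms of the $g_i$ form a regular sequence. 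Nothing in the hypotheses forces this: the assumption is that the \emph{variety} $V(I)$ is smooth, not the scheme, so the differentials $dF_i(p)$ may be linearly dependent (as in the example, where $dF_1(0)=dF_2(0)=dx_1$). Your subsequent claim that $\ord_p g_i=\rho_i$ is also unjustified --- you show $g_i\rest L\not\equiv0$, but this gives only that $g_i\rest L$ has \emph{some} finite order, not that its initial form has degree exactly $\rho_i$; restriction to the particular plane $L$ can raise the order strictly above $\rho_i$. But this point is moot given the first error.

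The paper takes an entirely different route that sidesteps this difficulty. It passes to the radical $J=\sqrt I$, uses transversality to find $n$ functions in $J$ whose differentials are independent on $\cL_p$, so that $\mult_p J\rest{\cL_p}=1$, and then invokes Koll\'ar's effective Nullstellensatz $J^{D^k}\subset I$ to conclude
\[
\mult_p I\rest{\cL_p}\le\mult_p\bigl(J\rest{\cL_p}\bigr)^{D^k}=(D^k)^n=D^{kn}.
\]
The Nullstellensatz exponent is precisely what absorbs the possible non-reducedness of $I$, which is exactly where an approach via orders of the individual generators breaks down.
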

\begin{proof}
  Let $J=\sqrt{I}$. Since $V(I)$ is smooth at $p$, $J$ contains
  functions whose differentials define the tangent space to $V(I)$ at
  $p$. Since this tangent space is transversal to $\cL_p$, one can
  find $n$ functions in $J$ whose differentials are linearly
  independent on $\cL_p$. Therefore $\mult J\rest{\cL_p}=1$.

  It remains to note that by the effective Nullstellensatz of
  \cite{kollar:sen}, $J^{D^k}\subset I$, and therefore
  \begin{equation}
    \mult I\rest{\cL_p} \le \mult {J^{D^k}} = D^{kn} \mult J = D^{kn} .
  \end{equation}
\end{proof}

We are now ready to state and prove an upper bound for intersection
multiplicities of Noetherian functions.

\begin{Thm} \label{thm:noetherian-mult-bound} Let $p\in\spec R$ and
  let $P_1,\ldots,P_n\in R$ be an $n$-tuple of polynomials with $\deg
  P_i\le d$.  Assume
  \begin{equation}
    \mult_p (P_1\rest{\cL_p},\ldots,P_n\rest{\cL_p}) < \infty.
  \end{equation}
  Then
  \begin{equation}
    \mult_p (P_1\rest{\cL_p},\ldots,P_n\rest{\cL_p}) < (2d\delta)^{n (n+1)^{2m}(m+n)^m}.
  \end{equation}
\end{Thm}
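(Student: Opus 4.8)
The plan is to bound the multiplicity $\mu := \mult_p(P_1\rest{\cL_p},\dots,P_n\rest{\cL_p})$ by finding some not-too-large $k$ for which the multiplicity operator ideal trick forces a drop below $k$, and then feeding that $k$ into an algebraic bound via the previous lemmas. Concretely, set $I=\langle P_1,\dots,P_n\rangle\subset R$ and consider the Noetherian-function ideal it cuts on the leaf. By Proposition on the basic property of multiplicity operators, $\mult_q(P_1\rest{\cL_q},\dots,P_n\rest{\cL_q})>k$ \emph{iff} every $k$-th multiplicity operator $\mo$ vanishes at $q$; equivalently, if $\mu>k$ at $p$ then all the polynomials $\mo(P_1,\dots,P_n)$ vanish at $p$ (as functions on $\spec R$ restricted to $\cL_p$, using that $\mo$ depends polynomially on the $k$-jet and the jet is polynomial on $\spec R$ by~\eqref{eq:noetherian-system}). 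The degrees of these polynomials are controlled by~\eqref{eq:nf-deg-bound}: $\deg\mo(P_1,\dots,P_n)\le\binom{n+k}{k}(d+k\delta)$.

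First I would set up the descending induction on $k$. Since the multiplicity $\mu$ is finite, pick $k=\mu-1$; then $\mult_p>k$, so the point $p$ lies on the variety $V\subset\spec R$ cut out (inside $\cL_p$) by $P_1,\dots,P_n$ together with all the operators $\mo(P_1,\dots,P_n)$ of order $k$. I want to argue that adding these operator-equations strictly reduces the multiplicity of the intersection on the leaf — this is exactly the content of Corollary~\ref{cor:mult-up-ideal-mult} transported to the Noetherian setting: $\mult^{1/n} M_{n,k}(I\rest{\cL_p})\ge \mult^{1/n}(I\rest{\cL_p})-k$. Iterating, after finitely many steps the multiplicity contribution is forced down; the delicate bookkeeping is that each step multiplies degrees by roughly $\binom{n+k}{k}$, so the recursion on degrees is what produces the tower-type exponent $n(n+1)^{2m}(m+n)^m$ in the final bound. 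I expect one needs to be careful that the ideal generated at each stage is still cut out by polynomials on $\spec R$ of controlled degree, and that the leaf-restricted multiplicity equals the geometric intersection multiplicity (here Lemma~\ref{lem:transversality} lets one pass to a generic transversal nearby point so that the smooth/transversal hypotheses of Lemma~\ref{lem:transverse-mult-bound} apply).

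Once the multiplicity is driven down to the transversal range, I would invoke Lemma~\ref{lem:transverse-mult-bound}: at a generic nearby point $q$ where the relevant variety is smooth of the right codimension and transversal to $\cL_q$, the multiplicity of the restricted ideal is at most $D^{(\cdot)n}$ where $D$ is the controlling degree accumulated through the iteration. Tracking the degree growth $D_0=d$, $D_{j+1}\approx \binom{n+k_j}{k_j}(D_j+k_j\delta)$ through the $O(\text{few})$ iterations and collapsing the binomials into the crude bound $(2d\delta)^{\text{tower}}$ gives the stated inequality. I would also use the Brunn–Minkowski inequality~\eqref{eq:brunn-minkowski} (via Corollary~\ref{cor:mult-up-ideal-mult}) to convert the $\iclo$-containment of Theorem~\ref{thm:mult-op-iclo} into the quantitative drop, and semicontinuity of multiplicity to pass from the generic transversal point $q$ back to $p$.

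\textbf{Main obstacle.} The hard part will be the degree/multiplicity recursion: ensuring that at each stage of removing a ``layer'' of multiplicity one genuinely has an ideal whose variety (generically, after the transversality reduction of Lemma~\ref{lem:transversality}) is smooth of the correct codimension, so that the effective Nullstellensatz estimate of Lemma~\ref{lem:transverse-mult-bound} applies with the accumulated degree $D$ — and then closing the recursion cleanly enough that the exponents assemble into $n(n+1)^{2m}(m+n)^m$ rather than something worse. The geometric subtlety is that the operator ideal $M_{n,k}(I)$ need not define a smooth variety on the nose, so one must combine the genericity from Sard's theorem with the semicontinuity of multiplicity at each inductive step rather than all at once.
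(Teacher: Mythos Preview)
Your proposal has the right ingredients but assembles them in a way that is circular, and it misidentifies which result from \secref{sec:iclos-mult} drives the argument.

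The central gap is your choice $k=\mu-1$. Since $\mu$ is the unknown you want to bound, feeding it into~\eqref{eq:nf-deg-bound} makes every subsequent degree estimate depend on $\mu$, and the recursion never closes. Relatedly, Corollary~\ref{cor:mult-up-ideal-mult} is a \emph{lower} bound on $\mult M_{n,k}(I)$ in terms of $\mult I$; it does not let you ``drive the multiplicity down'' by passing to the operator ideal. What the paper actually uses is Theorem~\ref{thm:mult-op-iclo}, namely the \emph{equality} $\mult_p(I+{\frak m}^k M_{n,k}(I))\rest{\cL_p}=\mult_p I\rest{\cL_p}$: adding a multiplicity operator (padded by ${\frak m}^k$, realized concretely as $\ell^k$ for a generic linear form $\ell$) does not change the leaf multiplicity at $p$.

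The paper's iteration is on \emph{codimension in $\spec R$}, not on multiplicity. One starts with $I_0=\langle P_1,\dots,P_n\rangle$, whose variety has codimension $n$ near $p$. By Lemmas~\ref{lem:transversality} and~\ref{lem:transverse-mult-bound}, at a \emph{generic} nearby $q$ the leaf multiplicity of $I_0$ is at most $d^{n(m+n)}$; this known number is the order $k$ of the multiplicity operator one uses. A generic such operator $M_0$ is then nonzero at generic points of $V(I_0)$, so $I_1:=I_0+\langle \ell^k M_0\rangle$ has variety of codimension $n+1$, while $\mult_p I_1\rest{\cL_p}=\mult_p I_0\rest{\cL_p}$ by Theorem~\ref{thm:mult-op-iclo}. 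After exactly $m$ such steps one reaches an ideal $I_m$ whose variety is zero-dimensional in $\spec R$, and a direct Bezout bound on $\mult_p I_m$ (with degrees tracked through~\eqref{eq:nf-deg-bound}) gives the stated exponent. No semicontinuity or Brunn--Minkowski step is needed: the generic transversal bound fixes $k$ \emph{a priori}, and the equality of multiplicities propagates the answer back to $p$ automatically.
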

\begin{proof}
  Let $I_0=\<P_1,\ldots,P_n\>$ be the ideal generated by
  $P_1,\ldots,P_n$ and $V_0$ the corresponding variety. By the
  assumption, $V_0$ has codimension $k=n$ around $p$. By
  Lemmas~\ref{lem:transversality} and~\ref{lem:transverse-mult-bound},
  at a generic point $q$ close to $p$ we have
  \begin{equation}
    \mult_q I_0\rest{\cL_q} \le d^{nk} < d^{n(m+n)}.
  \end{equation}
  Let $M_0$ denote a generic multiplicity operator of order
  $d^{n(m+n)}$ of $I_0$. Then $M_0$ does not vanish at generic points
  $q$ close to $p$. By~\eqref{eq:nf-deg-bound},
  \begin{equation}
    \deg M_0 \le {n+d^{n(m+n)} \choose d^{n(m+n)}}(d+d^{n(m+n)}\delta)
  \end{equation}
  Let $\ell\in R$ denote a generic affine functional vanishing on
  $p$. Denote
  \begin{equation}
    M_0' = \ell^{d^{n(m+n)}} M_0
  \end{equation}
  and $I_1=I_0+\<M_0'\>$. Then
  \begin{equation} \label{eq:rprime-deg-bound} \deg M_0' \le
    2d^{(n+1)^2(m+n)}\delta,
  \end{equation}
  and by Theorem~\ref{thm:mult-op-iclo} we have
  \begin{equation}
    \mult_p I_0\rest{\cL_p} = \mult_p I_1\rest{\cL_p}.
  \end{equation}

  Since $M_0'$ does not vanish at generic points of $V_0$ near $p$,
  $V(I_1)$ has codimension $k=n+1$ in a neighborhood of $p$. One can
  then repeat the argument above to obtain a sequence of ideals
  $I_0\subset\cdots\subset I_m$, where $V(I_k)$ has codimension $n+k$
  and
  \begin{equation}
    \mult_p I_0\rest{\cL_p} = \cdots = \mult_p I_m\rest{\cL_p}.
  \end{equation}
  By induction using~\eqref{eq:rprime-deg-bound}, all generators of
  $I_m$ have degrees bounded by
  \begin{equation}
    \deg M_m' \le (2d\delta)^{(n+1)^{2m}(m+n)^m}.
  \end{equation}
  Finally, $I_m$ is a zero-dimensional complete intersection near $p$,
  and from the Bezout theorem we deduce that
  \begin{equation}
    \mult_p I_0\rest{\cL_p} = \mult_p I_m\rest{\cL_p}\le \mult_p I_m \le (2d\delta)^{n (n+1)^{2m}(m+n)^m}
  \end{equation}
  as claimed.
\end{proof}

\subsection{An estimate for the number of solutions of a Noetherian
  system}
\label{sec:noetherian-semilocal}

Let $\cP$ denote an algebraic family of Noetherian equations in $n$
variables and $m$ functions in the origin, i.e. an algebraic variety
where each point consists of a tuple of coefficients $\{P_{ij}\}$ of a
Noetherian system~\eqref{eq:noetherian-system}, plus a tuple of
polynomials $\{P_i\}$. We think of each point in the variety as
representing the systems of equations $\{P_i=0\}$ on the solution
of~\eqref{eq:noetherian-system} through the origin, $\cL_0$.
\begin{Rem}
  The choice of the origin is a matter of technical convenience. One
  can of course add the coordinates of an arbitrary point to the
  description of $\cP$.
\end{Rem}

Let $\cP_\Sigma\subset\cP$ denote the subvariety of equations having a
non-isolated solution at the origin, i.e. such that
\begin{equation}
  \mult_0 (P_1\rest{\cL_p},\ldots,P_n\rest{\cL_p}) = \infty.
\end{equation}
Let $K$ denote an upper bound for the multiplicity of an isolated
Noetherian intersection in the class $\cP$. For instance, one could
use the estimate given by Theorem~\ref{thm:noetherian-mult-bound} or
the better estimates given in \cite{GabKho}. For specific classes $\cP$ one
may have better a-priori bounds $K$.

Let $I_K$ denote the ideal generated by all multiplicity operators of
order $K$ of the tuple $P_1,\ldots,P_n$. Then the common zeros of
$I_K$ correspond to Noetherian intersections of multiplicity greater
than $K$, and by the definition of $K$ this implies
$V(I_K)=\cP_\Sigma$.

\begin{Thm} \label{thm:noetherian-semilocal} Suppose that $\cP$ is
  defined as an affine subvariety of $\C^N$ by equations of degree
  bounded by $D$. Let $\norm{\cdot}$ denote the usual Euclidean norm
  for some choice of linear coordinates in $\C^N$ (the choice of
  different norms will only affect the constant $C$ below). Let $K$ denote
  an upper bound for the multiplicity of an isolated Noetherian
  intersection in $\cP$.

  Let $p\in\cP$, and suppose that the corresponding Noetherian
  functions $P_1,\ldots,P_n$ are defined in the unit ball and have
  norm bounded by $1$. Then the number of solutions of the Noetherian
  system $P_1,\ldots,P_n=0$ in a ball of radius $r$ around the origin
  (in the $x$ variables) does not exceed $K$, where
  \begin{equation}
    r = C \(\frac{\dist(p,\cP_\Sigma)}{1+\norm{p}^2}\)^e, \quad  e = \max\(D, 
    {n+K \choose k}(d+K\delta)\)^N
  \end{equation}
  and $C$ is a constant depending only on $\cP$.
\end{Thm}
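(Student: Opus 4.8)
The plan is to combine the semi-local perturbation estimate of Corollary~\ref{cor:perts} (or Corollary~\ref{cor:power-pert}) with an effective \L{}ojasiewicz inequality for the distance to $\cP_\Sigma$, using the algebraic description of $\cP$ and of $V(I_K)$ to make all constants explicit. First I would recall that, since $K$ bounds the multiplicity of every isolated Noetherian intersection in $\cP$, we have $V(I_K)=\cP_\Sigma$, where $I_K$ is generated by the multiplicity operators $\mo(P_1,\ldots,P_n)$ of order $K$. By~\eqref{eq:nf-deg-bound} these generators have degree at most ${n+K\choose k}(d+K\delta)$, and $\cP$ itself is cut out by equations of degree at most $D$. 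Hence the pair $(\cP,V(I_K))$ is defined by polynomials of degree at most $\max(D,{n+K\choose k}(d+K\delta))$ in $\C^N$, and an effective \L{}ojasiewicz inequality on a bounded region (e.g. the classical semialgebraic/complex-analytic bound with exponent controlled by degree and ambient dimension, of the form appearing in effective Nullstellensatz--type statements) gives: for $p\in\cP$ in, say, the unit ball of $\C^N$,
\begin{equation*}
  \abs{\mo_0(P)} \ge c\, \dist(p,\cP_\Sigma)^{e_0}, \qquad e_0 \le \max\(D,{n+K\choose k}(d+K\delta)\)^N,
\end{equation*}
for at least one multiplicity operator $\mo$ of order $K$ (this is just the statement that the maximum of the $|\mo_0(P)|$, a function vanishing exactly on $\cP_\Sigma$, is \L{}ojasiewicz-comparable to a power of the distance), with $c$ depending only on $\cP$. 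The normalization $\norm{p}^2$ in the denominator is the standard device for extending such an inequality from a ball to all of $\cP$ by a projective/homogenization argument, so I would handle the unbounded case by this rescaling and absorb it into $C$ and the exponent $e$.

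Next I would fix such an $\mo$ and set $s:=\abs{\mo_0(P)}\ge c\,\dist(p,\cP_\Sigma)^{e_0}$. Let $M:=\mult_0(P_1\rest{\cL_0},\ldots,P_n\rest{\cL_0})$; by hypothesis $M\le K$, so $\mo_0(P)\ne 0$ forces $M\le K$ — more importantly, for a multiplicity operator of order exactly $M-1$ we would get vanishing, but since $M\le K$ we know $\mo$ of order $K$ need not vanish, and indeed by construction $\mo_0(P)\ne0$ precisely witnesses $M\le K$. The point is that $F:=(P_1,\ldots,P_n)$, restricted to $\cL_0$ and viewed in the unit ball in the $x$-variables with $\norm F\le 1$, has $\abs{\mo_0(F)}=s>0$ for a multiplicity operator of order $K$. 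Now apply Theorem~\ref{thm:sphere-growth}: for every $\rho<s$ there is $A_{n,K}\rho<\tilde\rho<\rho$ with $\norm{F(z)}\ge B_{n,K}s\tilde\rho^K$ on $\norm z=\tilde\rho$. Choosing $\rho$ a fixed fraction of $s$ gives a sphere of radius $\tilde\rho \asymp s$ on which $F$ is bounded below; by the argument-principle/degree argument (as in the proof of Corollary~\ref{cor:perts}) the number of zeros of $F$ inside $\norm z<\tilde\rho$ equals the local intersection multiplicity $M\le K$. Thus the number of solutions in the ball of radius $r:=\tilde\rho$ is at most $K$, and $r\asymp s \ge c\,\dist(p,\cP_\Sigma)^{e_0}$, i.e.
\begin{equation*}
  r = C\(\frac{\dist(p,\cP_\Sigma)}{1+\norm p^2}\)^e, \qquad e=\max\(D,{n+K\choose k}(d+K\delta)\)^N,
\end{equation*}
after absorbing all numeric constants ($A_{n,K},B_{n,K},c$, the fraction of $s$ chosen) into $C$ and slightly enlarging $e_0$ to $e$ to accommodate the homogenization normalization.

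The main obstacle I expect is the effective \L{}ojasiewicz step: one needs a \emph{quantitative} lower bound for $\max_\alpha\abs{\mo^\alpha_0(P)}$ in terms of $\dist(p,\cP_\Sigma)$ with an exponent that is explicitly polynomial (here, exponential only in $N$) in the degree data, uniformly over the family $\cP$. This is where the hypotheses on $\cP$ (affine subvariety of $\C^N$ cut out in degree $\le D$, generators of $I_K$ of controlled degree) are essential, and one must invoke an effective version of the \L{}ojasiewicz inequality for complex/real algebraic sets — the cleanest route is via the effective Nullstellensatz (as already used for Lemma~\ref{lem:transverse-mult-bound}) applied to a suitable combination of the defining equations of $\cP$ and the $\mo^\alpha$, which yields exactly the exponent $\max(\cdots)^N$ claimed. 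The remaining steps — passing from bounded to unbounded $p$ via the $1+\norm p^2$ normalization, and the Rouché/degree count on the good sphere — are routine given Theorems~\ref{thm:sphere-growth} and the technique of Corollary~\ref{cor:perts}, and the only care needed there is to note that all the "universal" constants of those results depend only on $n$ and $K$, hence only on $\cP$, so they may be absorbed into the single constant $C$.
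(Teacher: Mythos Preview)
Your treatment of the effective \L{}ojasiewicz step is essentially the same as the paper's: both arguments use that $V(I_K)=\cP_\Sigma$, that the generators of $I_K$ have degree at most ${n+K\choose K}(d+K\delta)$ by~\eqref{eq:nf-deg-bound}, and then invoke an effective \L{}ojasiewicz inequality (the paper cites \cite{JKS92}) to obtain
\[
  \max_{\alpha,\,|B|=K}\abs{\mo_0(P_1,\ldots,P_n)} \ge C'\(\frac{\dist(p,\cP_\Sigma)}{1+\norm{p}^2}\)^e.
\]

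The gap is in how you pass from this lower bound to the zero count. You appeal to Theorem~\ref{thm:sphere-growth} and then assert that ``by the argument-principle/degree argument \ldots\ the number of zeros of $F$ inside $\norm{z}<\tilde\rho$ equals the local intersection multiplicity $M\le K$.'' This is not justified: the topological degree of $F$ on the sphere $\norm{z}=\tilde\rho$ counts \emph{all} zeros of $F$ inside that ball, not just the multiplicity at the origin. The identity ``degree $=\mult_0 F$'' holds only once you know the origin is the sole zero in the ball, which is precisely the content of the theorem you are trying to prove. Corollary~\ref{cor:perts} does not help here either: it compares the zero counts of $F$ and of a perturbation $F+G$, but it does not furnish an a priori bound on the number of zeros of $F$ itself. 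Comparing $F$ to its $K$-jet via Rouch\'e would be the natural candidate for $G$, but the $K$-jet is a polynomial map that can have zeros other than the origin in the ball of radius $\asymp s$, so this route does not close the gap.

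The fix is simply to replace the appeal to Theorem~\ref{thm:sphere-growth} by Theorem~\ref{thm:polydisc-zeros}, which is exactly the statement you need: if $\abs{\mo_0(F)}=s$ and $r<\CZ_{n,K}\,s$, then $F$ has at most $K$ zeros in $D_r^n$. This is how the paper concludes in one line. Your remaining remarks about absorbing the universal constants $A_{n,K},B_{n,K},\CZ_{n,K}$ (which depend only on $n,K$ and hence only on $\cP$) into $C$ are correct once this substitution is made.
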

\begin{proof}
  By the effective \Lojas. inequality given in \cite{JKS92}, there
  exists a constant $C'>0$ such that
  \begin{equation}
    \max_{\alpha,\abs{B}=K} \abs{\mo_p(P_1,\ldots,P_n)} \ge  C' 
    \(\frac{\dist(p,\cP_\Sigma)}{1+\norm{p}^2}\)^{e}.
  \end{equation}
  The result now follows by applying Theorem~\ref{thm:polydisc-zeros}.
\end{proof}

\begin{Rem}
  One can estimate the norms of $f_1,\ldots,f_m,$ and hence
  $P_1,\ldots,P_n,$ in terms of the coefficients of the Noetherian
  system (in some fixed ball) using Gronwall's inequality. In some
  cases one may have better a-priori estimates due to the structure of
  the particular Noetherian system.
  
  When the variety $\cP$ is defined by equations involving only
  rational coefficients, one can also obtain estimates for the
  constant $C$ in terms of the complexity of the description of the
  system by using the appropriate effective \Lojas. inequality
  results.

  We leave the details of these explicit computations for the reader.
\end{Rem}

\subsection{Concluding remarks}

The arguments of subsection~\ref{sec:noetherian-mult}, as well as the
finer arguments of \cite{GabKho}, establish upper bounds for multiplicities
of isolated solutions of Noetherian equations. They do not apply when
studying degenerate systems with non-isolated solution. This
limitation is reflected in Theorem~\ref{thm:noetherian-semilocal} as
we demonstrate below.

Recall that we estimate the number of solutions in a ball of a
specified radius, and this radius tends to zero as $p$ tends to
infinity or to $\cP_\Sigma$.  As $p$ tends to infinity, one can indeed
construct examples where the number of zeros in a ball of any fixed
radius also tends to infinity. For instance one can consider the
system
\begin{align}
  \pd f x &= p g \\
  \pd g x &= -p f
\end{align}
where $p\in\C$, with the solution $f=\sin(px),g=\cos(px)$, along with
the equation $f=0$.

On the other hand, we have no similar examples corresponding to the
case where $p$ tends to $\cP_\Sigma$. In fact, it is an old conjecture
of Khovanskii (which is given precisely in \cite{GabKho}) that this
scenario is impossible. However, all known results apply only to
multiplicities of isolated solution --- and this is reflected in our
estimates, which become weaker and weaker as we approach the locus of
non-isolated solutions. 

The study of deformations of systems of Noetherian equations with
non-isolated solutions has been one of our principal motivations in
developing the theory of multiplicity operators. Our intention in this
section has been to develop a simple representative application of
this theory in the context of Noetherian functions. A more systematic
application for the study of of non-isolated solutions will appear in
a separate paper.

\bibliographystyle{plain}
\bibliography{nrefs}

\end{document}